\newtheorem{theorem}{Theorem}[section]
\newtheorem{conjecture}[theorem]{Conjecture}
\newtheorem{lemma}[theorem]{Lemma}
\newtheorem{proposition}[theorem]{Proposition}
\newtheorem{corollary}[theorem]{Corollary}
\newtheorem*{maintheorem}{Theorem}
\theoremstyle{definition}
\newtheorem{definition}[theorem]{Definition}
\newtheorem{example}[theorem]{Example}
\newtheorem{remark}[theorem]{Remark}
\newcommand{\ten}{10}
\newcommand{\eleven}{11}
\newcommand{\twelve}{12}
\newcommand{\thirteen}{13}
\newcommand{\fourteen}{14}
\newcommand{\fifteen}{15}
\newcommand{\sixteen}{16}
\newcommand{\seventeen}{17}
\newcommand{\eighteen}{18}
\author{Heather M. Russell}
\address{Department of Mathematics and Computer Science, University of Richmond, 212 Jepson Hall, 221 Richmond Way, University of Richmond, VA 23173 U.S.A.}
\email{hrussell@richmond.edu}
\author{Julianna Tymoczko}
\address{Department of Mathematics and Statistics, Smith College, Northampton, MA 01063 U.S.A.}
\email{jtymoczko@smith.edu}
\thanks{HMR received support from an AWM Mentoring Travel Grant. JT received support from NSF-DMS-1362855 and NSF-DMS-1800773. The authors thank Mee Seong Im, Brendon Rhoades, and Jieru Zhu for helpful conversations on this work and Miles Clikeman for generating the data in Figure \ref{fig:counterdata}.}
\title{The transition matrix between the Specht and  $\mathfrak{sl}_3$ web bases is unitriangular with respect to shadow containment}
\begin{document}

\maketitle

\begin{abstract}
Webs are planar graphs with boundary that describe morphisms in a diagrammatic representation category for $\mathfrak{sl}_k$.  They are studied extensively by knot theorists because braiding maps provide a categorical way to express link diagrams in terms of webs, producing quantum invariants like the well-known Jones polynomial. One important question in representation theory is to identify the relationships between different bases; coefficients in the change-of-basis matrix often describe combinatorial, algebraic, or geometric quantities (like, e.g., Kazhdan-Lusztig polynomials).  By ``flattening" the braiding maps, webs can also be viewed as the basis elements of a symmetric-group representation.  

In this paper, we define two new combinatorial structures for webs: {\em band diagrams} and their one-dimensional projections, {\em shadows}, that measure depths of regions inside the web. As an application, we resolve an open conjecture that the change-of-basis between the so-called {\em Specht basis} and web basis of this symmetric-group representation is unitriangular for $\mathfrak{sl}_3$-webs (See \cite{MR3920353} and \cite{MR3998730}.) We do this using band diagrams and shadows to construct a new partial order on webs that is a refinement of the usual partial order. In fact, we prove that for $\mathfrak{sl}_2$-webs, our new partial order coincides with the tableau partial order on webs studied by the authors and others \cite{ImZhu, khovanov2004crossingless, MR3998730, MR3920353}. We also prove that though the new partial order for $\mathfrak{sl}_3$-webs is a refinement of the previously-studied tableau order, the two partial orders do not agree for $\mathfrak{sl}_3$.
\end{abstract}

\section{Introduction}

Webs are planar graphs that lie at the interface of knot theory, representation theory, and combinatorics.  From the knot-theoretic perspective, $\mathfrak{sl}_k$ webs are labeled plane graphs that act as morphisms in diagrammatic representation categories for the Lie algebras $\mathfrak{sl}_k$ and their associated quantum universal  enveloping algebras \cite{MR1403861}. Each tangle diagram can be interpreted as a linear combination of webs that is unique up to a writhe factor. When the tangle is a knot or link, web relations produce an integer (in the classical setting) or polynomial (in the quantum setting) \cite{MR1091619}. The polynomials coming from this process are known as quantum link invariants; the celebrated Jones polynomial is a quantum link invariant that arises in the $\mathfrak{sl}_2$ setting \cite{MR766964}. 

From the representation-theoretic point of view, webs encode maps between representations, and the graph-theoretic structure of webs corresponds to algebraic properties and operations on the maps. For instance, webs are often drawn in a rectangular region with boundary vertices at the top and bottom that indicate source and target spaces, each of which is a tensor product of fundamental representations; stacking webs corresponds to composition of maps. The internal structure corresponds to a particular intertwining of the Lie algebra action, and skein-theoretic operations on the web summarize algebraic operations on the representation \cite{MR2710589}.

We study webs whose boundary points all lie on the bottom of the rectangular region (so-called {\em bottom webs}) in the case of $\mathfrak{sl}_2$ and $\mathfrak{sl}_3$.  The space of all $\mathfrak{sl}_k$ bottom webs with fixed boundary can be identified with the $\mathfrak{sl}_k$-invariant subspace of the tensor product of fundamental representations dictated by that boundary data \cite{K}. In the $\mathfrak{sl}_2$ and $\mathfrak{sl}_3$ cases, web relations can be used in an algorithmic fashion to produce a basis of {\em reduced webs} for these invariant spaces. These reduced web bases have a rich combinatorial structure. For instance, the reduced $\mathfrak{sl}_2$ bottom webs are also known as Temperley-Lieb diagrams or crossingless matchings; they are enumerated by Catalan numbers and are ubiquitous throughout the mathematical literature \cite{jones1983index, kauffman1987state, lickorish1992calculations, RTW, MR498284}. The combinatorics of reduced $\mathfrak{sl}_3$ webs and implications in knot theory and representation theory are less well-understood and have emerged as fertile areas of ongoing work \cite{housley2015robinson, ImZhu, MR3205780, PPR, MR3119361, MR3273401, T}. For $\mathfrak{sl}_4$ and beyond, the web relations do not appear a priori to reduce the complexity of the webs, meaning there is no clear mechanism to decide if a web is reduced nor is there a closed-form characterization of a basis of reduced webs (though see work of Fontaine in this direction \cite{FON}).

One core question in combinatorial representation theory is {\bf how two bases for a representation are related}.  In symmetric function theory, the transition matrices between different bases have been studied extensively, in part because they encode important combinatorial quantities \cite{MR1354144}. Much less is understood about the web basis, so the current open questions ask fundamental questions like whether the web basis is the same or different from previously-known bases.  For instance, as bases for representations of $\mathfrak{sl}_k$, the web basis for $\mathfrak{sl}_2$ agrees with the dual canonical basis \cite{MR1446615}.  However, the two bases are different for $\mathfrak{sl}_3$ webs \cite{KK}. Tools from cluster algebras suggests that the web basis for $\mathfrak{sl}_3$ may instead align with the dual semicanonical basis \cite{MR3534844}, and very recent work develops a total order on terms of the basis that may help prove this conjecture \cite{10.1093/imrn/rnaa110}.

In the rest of this paper, we study the space of $\mathfrak{sl}_k$-invariants under the natural symmetric group action coming from permuting tensor factors. On the level of webs, this can be explicitly described using braiding on adjacent strands at the web boundary. From this diagrammatic viewpoint, the $\mathfrak{sl}_2$ and $\mathfrak{sl}_3$ web spaces equipped with this action have been proven to be irreducible symmetric group representations \cite{ImZhu, PPR, MR2801314}. More generally, one can use Schur-Weyl duality to decompose these invariant spaces as symmetric group representations \cite{MR1321638}.  

At the same time, invariant theory shows that the dimension of the space of $\mathfrak{sl}_2$ and $\mathfrak{sl}_3$ bottom webs is the number of rectangular standard Young tableaux with two and three rows, respectively \cite{MR0000255}.  There are several explicit 
bijections between webs and standard tableaux  \cite{FON, KK, MR3119361, T, MR2873098}, and standard tableaux index other classical bases for symmetric group representations.   Various authors proved that with respect to symmetric group representations, the $\mathfrak{sl}_k$ web basis coincides with the Kazhdan-Lusztig basis in the $k=2$ case \cite{frenkel1998kazhdan} but not for $k=3$ \cite{housley2015robinson} and likely not beyond.  

We focus on another classical basis called {\em the Specht} (or {\em Young's natural} or {\em the polytabloid}) basis \cite{MR1545531, MR0439548}. A naive guess is that the Specht and web bases coincide; however, this is false, in the sense that the natural bijection between webs and tableaux does not induce an $S_n$-equivariant map from the Specht module to web space \cite{MR2801314}. 
In a recent paper, we showed that in the case of $\mathfrak{sl}_2$ the change-of-basis matrix between these representations is unitriangular (with respect to a partial order inherited from the symmetric group action on tableaux), and also characterized some additional vanishing entries \cite{MR3920353}.  We conjectured that all entries were in fact nonnegative, which has since been proven by Rhoades \cite{MR3998730}.  We also conjectured that the vanishing entries we identified were the only zero terms in the transition matrix, a conjecture that was just proven by Im and Zhu \cite{ImZhu}. The third conjecture from that paper was that unitriangularity also holds for $\mathfrak{sl}_3$ webs.

In this paper, we prove the third conjecture, with respect to a different---and new---partial order on the basis webs.  The main results of this paper are as follows.  First, we construct a package of combinatorial structures associated to webs called {\em band diagrams} and {\em shadows}, including a partial order on the web basis called the shadow containment partial order.  Second, we compare the shadow containment partial order to the partial order inherited from tableaux.  One main result is the following.
\begin{maintheorem}
Let $\phi: V^{\mathcal{T}_{3n}} \rightarrow V^{\mathcal{W}_{3n}}$ be the transition matrix between the $S_{3n}$-representations with the Specht and web bases partially ordered according to the shadow containment order of Section \ref{sec:band}.  Then $\phi$ is unitriangular, namely upper-triangular with ones along the diagonal.  
\end{maintheorem}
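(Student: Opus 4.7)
The plan is to index both bases by standard Young tableaux of shape $3 \times n$ via the known bijection between such tableaux and reduced $\mathfrak{sl}_3$ webs on $3n$ boundary points, writing $W_T$ for the web corresponding to $T$ and $v_T$ for the Specht polytabloid. I would then expand
\[
v_T \;=\; \sum_{\sigma \in \mathrm{Col}(T)} \mathrm{sgn}(\sigma)\, \sigma \cdot \{T\},
\]
apply the representation map to each summand to obtain an (unreduced) $\mathfrak{sl}_3$ web, and reduce using the web relations. Unitriangularity then reduces to two claims: (i) the coefficient of $W_T$ in $\phi(v_T)$ is exactly $1$, and (ii) every other reduced web $W$ appearing in $\phi(v_T)$ strictly dominates $W_T$ in the shadow containment partial order of Section \ref{sec:band}.

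For the diagonal claim (i), my approach is to isolate a distinguished summand of $v_T$ — the one corresponding to the identity element of $\mathrm{Col}(T)$, say — whose image reduces to $W_T$ with coefficient $+1$ via a canonical sequence of local web moves dictated directly by the row–column data of $T$. The remaining summands are handled collectively by showing that each contributes only to webs appearing in (ii), so they cannot interfere with the diagonal entry. For the off-diagonal claim (ii), the key step is a shadow monotonicity lemma: every reduction move on a tabloid-induced web (circle removal, bigon reduction, and the $\mathfrak{sl}_3$ square/H-move) either preserves the shadow or deepens it relative to $W_T$. Band diagrams are the natural bookkeeping device here — bands attached to edges transform predictably under each relation, and their one-dimensional projections (shadows) record depth information that is invariant under the order in which reductions are performed.

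The hard part will be proving the shadow monotonicity lemma for $\mathfrak{sl}_3$ reductions. Unlike the $\mathfrak{sl}_2$ case, where crossingless matchings reduce merely by circle removal and the classical tableau order already suffices, $\mathfrak{sl}_3$ square moves locally rearrange trivalent structure and can produce several reduced summands simultaneously. The argument will require an induction on the number of reduction steps, together with a careful case analysis of how a square move modifies the bands adjacent to its four corners and how this modification propagates to the shadow. I expect this case analysis — rather than the Specht-side computation — to be the principal technical obstacle, and it is precisely where band diagrams earn their keep: they translate the two-dimensional ambiguity of $\mathfrak{sl}_3$ reduction into a one-dimensional containment statement that can be checked move-by-move. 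Once the lemma is in place, the theorem follows by combining (i) and (ii) with the fact (established in Section \ref{sec:band}) that shadow containment defines a genuine partial order on the web basis.
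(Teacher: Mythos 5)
Your plan hinges on an operation that is not actually available: ``apply the representation map to each summand to obtain an (unreduced) $\mathfrak{sl}_3$ web.'' The map $\phi$ is defined only abstractly, as the unique-up-to-scalar $S_{3n}$-equivariant isomorphism from the Specht module to web space; a tabloid $\sigma\cdot\{T\}$ is not an element of web space, and no assignment of a web to a tabloid is given (the bijection $\psi$ only pairs \emph{standard} tableaux with \emph{reduced} webs, and it is not equivariant). So expanding $v_T=\sum_{\sigma\in\mathrm{Col}(T)}\mathrm{sgn}(\sigma)\,\sigma\cdot\{T\}$ gives you nothing to reduce until you have first constructed an explicit equivariant realization of $\phi$ on the permutation module --- which is essentially the object under study, not an input. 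Even granting such a realization, two further steps are unsupported: (a) your claim (i) requires that among the $6^{n}$ column-group summands only the distinguished one contributes to the coefficient of $W_T$, and nothing in the proposal controls that cancellation; (b) your ``shadow monotonicity lemma'' points the wrong way --- the web relations \emph{remove} edges, so every reduction weakly \emph{decreases} boundary depth and hence shrinks the shadow (this is Lemma \ref{lem:shadowred}), and correspondingly the actual result (Theorem \ref{thm:coefficients}) is that the non-leading reduced webs in $\phi(v_{T})$ have shadows strictly \emph{contained} in that of $W_{T}$, not webs that ``dominate'' it.

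For contrast, the paper never touches the polytabloid expansion. It inducts on the rank of $T'$ in the tableau order, using equivariance in the minimal form $\phi(v_{T'})=s_i\cdot\phi(v_T)$ along a single Hasse edge (Proposition \ref{prop:tabact}), with base case the column-filled tableau (Lemma \ref{lem:basecase}). The work is then entirely on the web side: a case analysis of $s_i\cdot w$ according to the $i,(i+1)$ boundary symbols, a special argument for the \emph{smallest}-index edge into $w'$ showing $s_i\cdot w=w+w'+(\text{terms}\prec_S w)$ (Lemma \ref{lem:edgeshadow3}), and a diamond-type propagation lemma (Lemma \ref{lem:shadowprop}) guaranteeing that the lower-order terms stay below $w'$ after acting by $s_i$. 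If you want to salvage your approach, you would need to replace the tabloid expansion by such an inductive use of equivariance, or else supply the missing explicit intertwiner together with a genuine cancellation argument --- either of which is a substantial addition, not a routine case check.
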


The precise statement of the theorem can be found in Theorem \ref{thm:coefficients}.  Note one subtlety about this result: when it says these bases are ``partially ordered according to shadow containment order," it means that {\em any} total order consistent with the shadow containment partial order can be used in the result.  This is a strong condition that actually implies the vanishing of additional entries above the diagonal, as detailed in Theorem \ref{thm:coefficients}. 

En route to proving this result, we also show that a relationship in the tableau partial order implies the same relationship holds in the shadow containment partial order.  We can use that to resolve the conjecture about unitriangularity with respect to the tableau order, for {\em at least one} total order extending the tableau partial order.

\begin{maintheorem}
The shadow containment partial order is a refinement of the tableau partial order.  In particular, there exists a total order consistent with the tableau partial order with respect to which the transition matrix between the $S_{3n}$-representations with Specht basis and web basis is unitriangular.  
\end{maintheorem}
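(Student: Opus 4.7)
The plan is to deduce the second assertion of the theorem from the first, and to prove the refinement claim by reducing it to covering relations of the tableau partial order. The second assertion follows essentially formally: if shadow containment refines the tableau order, meaning that every relation $W \leq_{\mathrm{tab}} W'$ forces $W \leq_{\mathrm{sh}} W'$, then every linear extension of $\leq_{\mathrm{sh}}$ is automatically a linear extension of $\leq_{\mathrm{tab}}$. Choosing any such total order and invoking the previous main theorem then yields unitriangularity of the transition matrix with respect to a total order compatible with the tableau partial order.

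To prove the refinement itself, my plan is to reduce to the covering relations of the tableau partial order on rectangular three-row standard Young tableaux. These covering relations are generated by elementary swaps of adjacent entries $k$ and $k+1$ that move $k+1$ strictly later in reading order. By transitivity of shadow containment, it suffices to verify that each such covering relation induces a shadow containment on the corresponding pair of webs.

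The strategy for a single covering relation is local. First I would unpack the explicit web--tableau bijection used in the paper (the one compatible with the band-diagram construction of Section \ref{sec:band}) and identify how a $(k,k+1)$-swap in the tableau alters the associated web. Then I would enumerate the finitely many local configurations near the boundary strands labeled $k$ and $k+1$ of a reduced $\mathfrak{sl}_3$ web, such as the strands being joined by a cap, meeting at a trivalent vertex, or being separated by taller subwebs. For each configuration, I would compute the resulting change in the band diagram and verify that the new shadow function is pointwise at least the old one. Because the web is unchanged outside a small window near positions $k$ and $k+1$, this pointwise check reduces to a computation inside that window.

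The main obstacle is making this case analysis exhaustive and clean. The $\mathfrak{sl}_3$ setting admits more local boundary configurations than the $\mathfrak{sl}_2$ one because of the directed trivalent vertices, and some tableau swaps may not translate directly into a small local web move without first applying a skein simplification to isolate the swap's effect. Once normal forms for the local configurations are fixed, however, each band-diagram computation and its associated shadow inequality should follow by inspection, completing the refinement and, with it, the full theorem.
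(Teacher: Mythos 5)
Your formal deduction of the second assertion from the first is exactly right and matches the paper: any linear extension of $\prec_S$ extends $\prec_T$ (Corollary \ref{cor:total}), so the unitriangularity already proved with respect to shadow order (Theorem \ref{thm:coefficients}) yields Theorem \ref{thm:existtriang}. Reducing the refinement claim to covering relations of $\prec_T$ is also the paper's strategy. The gap is in how you propose to verify a single covering relation $w\stackrel{s_i}{\rightarrow}w'$, and it is twofold. First, your reduction to ``a computation inside a small window'' rests on the premise that the web is unchanged outside a neighborhood of positions $i,i+1$; this is false in general. A swap such as $0+\mapsto +0$ re-pairs arcs of the $M$-diagram with possibly distant endpoints (exactly as in the $\mathfrak{sl}_2$ covering moves of Remark \ref{rmk:sl2Hasse}, where $(a,i),(i{+}1,b)$ become $(i,i{+}1),(a,b)$), so the band diagram and the interior face structure change far from the window. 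What \emph{is} local is the change in the boundary word and hence in the depths along the $x$-axis, but to exploit that you need the observation that shadow containment is equivalent to pointwise comparison of boundary depths (Lemmas \ref{lem:axisdepth} and \ref{lem:depthshad}); without it, your local check does not control the shadow. Second, the case analysis you describe is never carried out, and you yourself flag its exhaustiveness as the main obstacle; the remark about needing skein simplifications is a red herring here, since both $w$ and $w'$ are reduced webs and no web relations enter the refinement claim at all.

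The paper's argument shows how all of this can be avoided: a tableau cover changes the boundary word only in positions $i,i+1$, in one of exactly three ways ($0+\mapsto+0$, $-0\mapsto 0-$, $-+\mapsto+-$), and since boundary depths are just partial sums of the boundary word, the depths of $w$ and $w'$ agree everywhere except on $(i,i+1)$, where the depth of $w'$ is strictly larger (Lemma \ref{lem:edge to bdry word}); Lemma \ref{lem:depthshad} then gives $w\prec_S w'$ immediately (and Corollary \ref{cor:TvsS} even shows the cover is a cover for $\prec_S$). So the missing ingredient in your proposal is precisely the reduction of shadow containment to boundary-word data, which replaces your unexecuted enumeration of local $\mathfrak{sl}_3$ configurations with a two-line comparison of depth sequences.
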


 Indeed, any total order extending the shadow containment partial order will also demonstrate unitriangularity with respect to the tableau partial order.  This is stated formally in Theorem \ref{thm:existtriang} and follows from Theorem \ref{cor:total}. In addition, we prove that the shadow containment partial order coincides with the partial order coming from tableaux on $\mathfrak{sl}_2$ webs (Theorem \ref{thm:samesl2}) but different on $\mathfrak{sl}_3$ webs (Theorem \ref{thm:notsamesl3}).  Despite this, we still believe that the transition matrix between Specht and web bases is unitriangular wtih respect to {\em any} total order extending the tableau partial order, as discussed in Conjecture \ref{conj:taborder}.

In the rest of this introduction, we sketch our methods and results in more detail.  Section \ref{sec:webtab} gives a quick summary of the background information on webs and tableaux that we need, including the bijection between the two (and intermediate objects like $M$-diagrams and boundary words used in that bijection).  

Section \ref{sec:halfplane} establishes the combinatorial generality in which we work. In particular, many of our arguments use only a subset of the conditions that characterize webs.  We define a {\em half-plane graph} to be any graph embedded in the plane with univalent vertices on the $x$-axis of the plane and trivalent vertices in the upper half-plane.  Since half-plane graphs are planar, they have dual graphs; we use distance in the dual graph to establish the notion of depth of faces of a half-plane graph.  Our definition of depth of faces generalizes the existing notion of depth for webs, which is crucial to the bijection between webs and tableaux \cite{KK, MR3119361, T}.  We use half-plane graphs to partition the upper half-plane into so-called {\em $d$-bands,} each of which consists of the union of all regions of depth $d$, with a {\em band diagram} that consists of the edges of the half-plane graph where distinct bands meet.  Figure \ref{fig:shadowex} gives an example.

\begin{figure}[ht]
\begin{tikzpicture}[scale=.5]

\draw[dashed, <-] (-1.5,0)--(4,0);
\draw[dashed, ->] (4,0)--(6.5,0);
\draw[fill=black,radius=4pt] (0,0)circle;
\draw[fill=black,radius=4pt] (1,0)circle;
\draw[fill=black,radius=4pt] (2,0)circle;
\draw[fill=black,radius=4pt] (3,0)circle;
\draw[fill=black,radius=4pt] (4,0)circle;
\draw[fill=black,radius=4pt] (5,0)circle;
\draw[fill=black,radius=4pt] (.5,1.5)circle;
\draw[fill=black,radius=4pt] (2.5,1.5)circle;
\draw[fill=black,radius=4pt] (4,1)circle;
\draw[fill=black,radius=4pt] (5,1)circle;
\draw[fill=black,radius=4pt] (4.5,2)circle;
\draw[fill=black,radius=4pt] (2.5,2.5)circle;

\draw[ black] (0,0)--(.5,1.5)--(1,0);
\draw[black] (.5,1.5)--(2.5,2.5)--(2.5,1.5)--(2,0);
\draw[black] (2.5,1.5)--(3,0);
\draw[black] (4,0)--(4,1)--(4.5,2)--(2.5,2.5);
\draw[black] (4,1)--(5,1);
\draw[black] (5,0)--(5,1)--(4.5,2);

\draw[thick, red] (0,0)--(.5,1.5)--(2.5,2.5)--(4.5,2)--(5,1)--(5,0);
\draw[thick, red] (2,0)--(2.5,1.5)--(3,0);

\node at (.5,.5) {\tiny{1}};
\node at (2.5,.5) {\tiny{2}};
\node at (2,3) {\tiny{0}};
\node at (1.5,1) {\tiny{1}};
\node at (3.5,1) {\tiny{1}};
\node at (4.5,1.35) {\tiny{1}};
\node at (4.5,.5) {\tiny{1}};

\node at (0,-.5) {\tiny{+}};
\node at (1,-.5) {\tiny{0}};
\node at (2,-.5) {\tiny{+}};
\node at (3,-.5) {\tiny{$-$}};
\node at (4,-.5) {\tiny{0}};
\node at (5,-.5) {\tiny{$-$}};

\node at (-.75,-1.25) {\small{$S_1$:}};
\draw[black] (0,-1.1)--(0,-1.4);
\draw[black] (5,-1.1)--(5,-1.4);
\draw[black] (0,-1.25)--(5,-1.25);

\node at (-.75,-2.25) {\small{$S_2$:}};
\draw[black] (2,-2.1)--(2,-2.4);
\draw[black] (3,-2.1)--(3,-2.4);
\draw[black] (2,-2.25)--(3,-2.25);
\end{tikzpicture}
\caption{A half-plane graph, band diagram, and shadow}\label{fig:shadowex}
\end{figure}
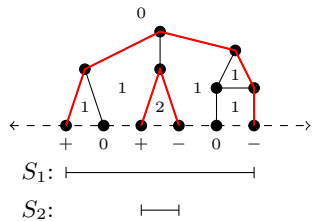

To a band diagram, we associate a {\em boundary word} in the alphabet $\{+, 0, -\}$ that encodes the consecutive changes in depth at integral boundary points of the half-plane graph. Corollary \ref{cor:anchored} proves the band diagram is a crossingless matching on boundary points labeled $+$ and $-$ possibly with additional closed circular components. This, too, generalizes a classical result from webs: Petersen, Pylyavskyy, and Rhoades proved that the boundary word of an $\mathfrak{sl}_3$ web is a balanced Yamanouchi word on the symbols $\{+,0,-\},$ where {\em balanced} means there are the same number of $+$s, $0$s, and $-$s, and {\em Yamanouchi} means every prefix of the word has at least as many $+$s as $0$s and at least as many $0$s as $-$s \cite{PPR}.  Finally, the {\em shadow} of a half-plane graph is a nested sequence of intervals obtained by projecting the arcs in the band diagram to the $x$-axis, as shown in Figure \ref{fig:shadowex}; two webs are comparable in the {\em shadow containment partial order} if their respective shadows are contained as subsets of the real line.

Sections \ref{sec:band} and Section \ref{sec:trans} contain the main results of the paper.  They build on Section \ref{sec:taborder}, which reviews a ranked partial order on tableaux $\prec_T$ coming from permutation of entries that we call the {\em tableau partial order,} as well as the partial order it induces on webs via the bijection from Section \ref{sec:webtab} \cite{MR3920353}.   Section \ref{sec:band} then defines the shadow containment partial order $\prec_S$ on webs and proves foundational properties, including: Lemma \ref{cont1} and Corollary \ref{cor:TvsS} that shadow containment partial order is a refinement of the tableau partial order for $\mathfrak{sl}_2$ and $\mathfrak{sl}_3$; Theorem \ref{thm:samesl2} that  $\prec_T$ and $\prec_S$ coincide for $\mathfrak{sl}_2$ webs; and Theorem \ref{thm:notsamesl3} that $\prec_T$ and $\prec_S$ differ for $\mathfrak{sl}_3$. Finally, Section \ref{sec:trans} defines the symmetric group action on the Specht and web bases and proves the main unitriangularity theorem stated above. 

We end with the conjecture that the transition matrix between the Specht and $\mathfrak{sl}_3$ web bases is unitriangular with respect to any total order completing the tableau partial order, and with a discussion of why the counterexamples produced elsewhere in the manuscript do not disprove this conjecture.

\section{Webs and Tableaux} \label{sec:webtab}

This section gives the essential background and definitions on webs and tableaux.

Given $k\geq 2$, an $\mathfrak{sl}_k$ web is a plane graph that acts as a morphism in a diagrammatic category encoding the representation theory of $U_q(\mathfrak{sl}_k)$. Each web encodes a unique intertwining map between representations of $U_q(\mathfrak{sl}_k)$. One common convention is to draw webs in a rectangular region with boundary vertices on the top and bottom edges of the region. The bottom and top boundary vertices of a web indicate the domain and codomain; composition of maps corresponds to stacking webs. The internal structure and labels of a web encode the map itself.  For instance, all boundary vertices are univalent, all internal vertices are trivalent, and the edges carry labels that depend on the value of $k$.  There are diagrammatic relations between (linear combinations of) webs that represent the same map. See \cite{cautis2014webs, Kim,MR1403861,MR2710589,MR2873098} for more on webs and their connection to quantum representation theory.

Precise labeling conventions and relations for webs differ slightly across the literature. We use those found in Kuperberg's work  though Kuperberg typically draws webs with boundary on a disk with a basepoint rather than along a horizontal axis \cite{MR1403861}. We specialize the quantum parameter to $q=1$ throughout our discussion.

In this section, we define $\mathfrak{sl}_2$ webs, $\mathfrak{sl}_3$ webs, and standard Young tableaux and describe combinatorial bijections between certain subsets of these objects. These bijections are key to understanding the relationship between the Specht and web bases for symmetric group representations.

\subsection{$\mathfrak{sl}_2$ and $\mathfrak{sl}_3$ Webs}

$\mathfrak{sl}_2$ webs--also known as Temperley-Lieb diagrams--are ubiquitous in mathematics and have been extensively studied \cite{jones1983index, kauffman1987state, lickorish1992calculations, RTW}. We include them in this paper especially to contrast their structure and properties with those of $\mathfrak{sl}_3$ webs. 

\begin{definition}
An {\it $\mathfrak{sl}_2$ web} on $n+m$ points is a crossingless matching in a rectangular region pairing $n$ points on the bottom and $m$ points on top possibly with circles in the interior. An {\it $\mathfrak{sl}_2$ bottom web} is an $\mathfrak{sl}_2$ web on $2n+0$ points for some $n\in \mathbb{N}$.   
\end{definition}

Let $\mathcal{W}$ be the set of all $\mathfrak{sl}_2$ webs with a fixed number of top and bottom boundary points. Denote by $V^{\mathcal{W}}$ the space of $\mathbb{C}$- linear combinations of webs in $\mathcal{W}$ modulo the relation shown in Figure \ref{fig:circlesl2} that a web with a circle component is equal to the result of removing the circle and multiplying by a factor of $-2$.  We call an $\mathfrak{sl}_2$ web {\it reduced} if it has no circle components. The reduced webs in $\mathcal{W}$ form a basis for $V^{\mathcal{W}}$. 

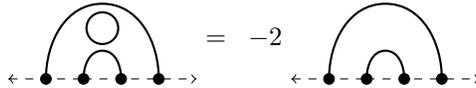
\begin{figure}[ht]
\begin{tikzpicture}[scale=.5]

\draw[dashed, <-] (-1,0)--(1,0);
\draw[dashed, ->] (1,0)--(4,0);
\draw[fill=black,radius=4pt] (0,0)circle;
\draw[fill=black,radius=4pt] (1,0)circle;
\draw[fill=black,radius=4pt] (2,0)circle;
\draw[fill=black,radius=4pt] (3,0)circle;
\draw[radius=12pt, thick] (1.5,1.35)circle;

\draw[ black, thick] (0,0) to[out=90,in=180] (1.5,2) to[out=0,in=90] (3,0);
\draw[ black, thick] (1,0) to[out=90,in=180] (1.5,.75) to[out=0,in=90] (2,0);
\end{tikzpicture}
\raisebox{15pt}{$=\;\; -2$}
\begin{tikzpicture}[scale=.5]
\draw[dashed, <-] (-1,0)--(1,0);
\draw[dashed, ->] (1,0)--(4,0);
\draw[fill=black,radius=4pt] (0,0)circle;
\draw[fill=black,radius=4pt] (1,0)circle;
\draw[fill=black,radius=4pt] (2,0)circle;
\draw[fill=black,radius=4pt] (3,0)circle;

\draw[ black, thick] (0,0) to[out=90,in=180] (1.5,2) to[out=0,in=90] (3,0);
\draw[ black, thick] (1,0) to[out=90,in=180] (1.5,.75) to[out=0,in=90] (2,0);
\end{tikzpicture}
\caption{Removing a circle from an $\mathfrak{sl}_2$ web} \label{fig:circlesl2}
\end{figure}

\begin{definition}
An {\it $\mathfrak{sl}_3$ web} is an oriented, plane graph in a rectangular region with boundary vertices on the top and bottom edges of the region such that: all boundary vertices are univalent, all internal vertices are trivalent, and every vertex is a source or sink. An {\it $\mathfrak{sl}_3$ bottom web} is an $\mathfrak{sl}_3$ web with no endpoints on the top boundary of the rectangular region. 
\end{definition}

As with $\mathfrak{sl}_2$ webs, an $\mathfrak{sl}_3$ web can have disconnected oriented circle components in the interior of the region. 
 
Given the set $\mathcal{W}$ of all $\mathfrak{sl}_3$ webs with a fixed set of boundary vertices and boundary orientations, we denote by $V^{\mathcal{W}}$ the space of $\mathbb{C}$-linear combinations of webs in $\mathcal{W}$ equipped with the relations shown in Figure \ref{fig:webrelations}. We call an $\mathfrak{sl}_3$ web {\it reduced} (or {\it non-elliptic}) if all closed faces have at least six boundary edges. (This prohibits circle components, which are interior faces with just one boundary edge.) Using the $\mathfrak{sl}_3$ web relations, every web can be expressed uniquely as a linear combination of reduced webs \cite{MR1403861}. Figure \ref{fig:webreduc} shows an example. Thus the reduced webs in $\mathcal{W}$ form a basis for $V^{\mathcal{W}}$.

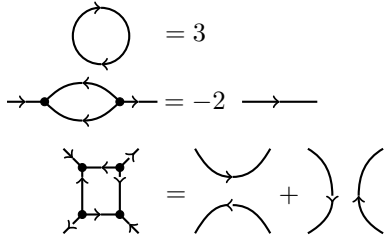
\begin{figure}[ht]
\begin{tikzpicture}[scale=.5]
\draw[thick, ->] (-.5,0) to[out=180,in=270] (-1.25,.75) to[out=90,in=180] (-.5,1.5); 
\draw[thick, ->] (-.5,1.5) to[out=0,in=90] (.25,.75) to[out=270,in=0] (-.5,0); 
\node at (1.75,.85) {$= 3$};

\draw[thick,->] (-3,-1)--(-2.5,-1);
\draw[thick] (-2,-1)--(-2.5,-1);
\draw[fill=black,radius=3pt] (-2,-1)circle;
\draw[thick] (-2,-1) to[out=45,in=180] (-1,-.5);
\draw[thick,<-] (-1,-.5) to[out=0,in=135] (0,-1);
\draw[fill=black,radius=3pt] (0,-1)circle;
\draw[thick] (-2,-1) to[out=-45,in=180] (-1,-1.5);
\draw[thick,<-] (-1,-1.5) to[out=0,in=225] (0,-1);
\draw[thick,->] (0,-1)--(.5,-1);
\draw[thick] (1,-1)--(.5,-1);
\node at (2,-1) {$= -2$};
\draw[thick,->] (3.25,-1)--(4.25,-1);
\draw[thick] (4.25,-1)--(5.25,-1);

\draw[fill=black,radius=3pt] (0,-2.75)circle;
\draw[fill=black,radius=3pt] (-1,-2.75)circle;
\draw[fill=black,radius=3pt] (0,-4)circle;
\draw[fill=black,radius=3pt] (-1,-4)circle;
\draw[thick,->](0,-2.75) -- (-.5,-2.75);
\draw[thick](-1,-2.75) -- (-.5,-2.75);
\draw[thick](0,-4) -- (-.5,-4);
\draw[thick,->](-1,-4) -- (-.5,-4);
\draw[thick,-<](0,-4) -- (0,-3);
\draw[thick](0,-3) -- (0,-2.75);
\draw[thick,->](-1,-4) -- (-1,-3);
\draw[thick](-1,-3) -- (-1,-2.75);
\draw[thick] (0,-2.75)--(.25,-2.5);
\draw[thick,-<] (.5,-2.25)--(.25,-2.5);
\draw[thick] (0,-4)--(.25,-4.25);
\draw[thick,->] (.5,-4.5)--(.25,-4.25);
\draw[thick] (-1,-2.75)--(-1.25,-2.5);
\draw[thick,->] (-1.5,-2.25)--(-1.25,-2.5);
\draw[thick] (-1,-4)--(-1.25,-4.25);
\draw[thick,-<] (-1.5,-4.5)--(-1.25,-4.25);

\draw[thick, -<] (2,-4.5) to[out=60,in=180] (3,-3.75);
\draw[thick,->] (2,-2.25) to[out=-60,in=180] (3,-3);
\draw[thick] (3,-3.75) to[out=0,in=120] (4,-4.5);
\draw[thick] (3,-3) to[out=0,in=240] (4,-2.25);

\draw[thick, -<] (5,-4.5) to[out=30,in=270] (5.65,-3.5);
\draw[thick] (5.65,-3.5) to[out=90,in=-30] (5,-2.25);
\draw[thick, ->] (7,-4.5) to[out=150,in=270] (6.35,-3.5);
\draw[thick] (6.35,-3.5) to[out=90,in=210] (7,-2.25);

\node at (1.5,-3.5) {$=$};
\node at (4.5, -3.5) {$+$};

\end{tikzpicture}
\caption{Relations on $\mathfrak{sl}_3$ webs}\label{fig:webrelations}
\end{figure}  

\begin{figure}[ht]
\begin{tikzpicture}[scale=.75]
\draw[style=dashed, <->] (.5,0)--(3.5,0);
\draw[radius=.08, fill=black](1,0)circle;
\draw[radius=.08, fill=black](2,0)circle;

\draw[radius=.08, fill=black](3,0)circle;

\draw[radius=.08, fill=black](2,.5)circle;
\draw[radius=.08, fill=black](2,1.25)circle;
\draw[radius=.08, fill=black](2,2)circle;
\begin{scope}[thick,decoration={
    markings,
    mark=at position 0.5 with {\arrow{>}}}
    ] 
   \draw[postaction={decorate}] (1,0)--(2,2);
    \draw[postaction={decorate}] (3,0)--(2,2);
   \draw[postaction={decorate}] (2,0)--(2,.5);
   \draw[postaction={decorate}] (2,1.25)--(2,2);
   \draw[postaction={decorate}] (2,1.25) to[out=250,in=90] (1.75,.9) to[out=270,in=110] (2,.5);
  \draw[postaction={decorate}] (2,1.25) to[out=290,in=90] (2.25,.9) to[out=270, in=70] (2,.5);
\end{scope}

\end{tikzpicture}
\raisebox{15pt}{$=\;\; -2$}\hspace{-.1in}
\begin{tikzpicture}[scale=.75]
\draw[style=dashed, <->] (.5,0)--(3.5,0);
\draw[radius=.08, fill=black](1,0)circle;
\draw[radius=.08, fill=black](2,0)circle;

\draw[radius=.08, fill=black](3,0)circle;

\draw[radius=.08, fill=black](2,1)circle;

\begin{scope}[thick,decoration={
    markings,
    mark=at position 0.5 with {\arrow{>}}}
    ] 
   \draw[postaction={decorate}] (1,0)--(2,1);
    \draw[postaction={decorate}] (3,0)--(2,1);
   \draw[postaction={decorate}] (2,0)--(2,1);

\end{scope}

\end{tikzpicture}
\caption{Reducing an $\mathfrak{sl}_3$ web} \label{fig:webreduc}
\end{figure}
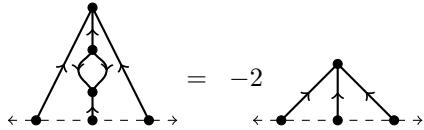

In this paper, we consider the following sets of bottom webs, to which we assign special notation.  Let $\mathcal{W}_{2n}$ be the set of $\mathfrak{sl}_2$ webs on $2n+0$ points, and let $\mathcal{W}_{3n}$ be the set of $\mathfrak{sl}_3$ webs with $0$ top boundary points and $3n$ bottom boundary points.  Moreover, we require that all (bottom) boundary points of webs in $\mathcal{W}_{3n}$ are sources. We denote their corresponding web spaces by $V^{\mathcal{W}_{2n}}$ and $V^{\mathcal{W}_{3n}}$. 

To track combinatorial statistics for these webs, we enumerate their vertices from left to right beginning with $1$. Using the notion of depth in a web, we associate to each web a boundary word \cite{MR3119361, T}.

\begin{definition}
Let $w$ be an $\mathfrak{sl}_2$ or $\mathfrak{sl}_3$ bottom web.
\begin{itemize}
\item The {\it faces of $w$} are the path-connected, open subsets of the complement of $w$ in the upper half-plane. 
\item Let $F$ be a face of $w$ and let $U$ be the unbounded face. The {\it depth} of $F$ in $w$ is the minimum number of edges of $w$ a path from $F$ to $U$ must cross.
\item The {\it boundary word for $w$} is a string of symbols--one for each boundary vertex of $w$--such that the $i^{th}$ symbol is $+$ if the depth immediately to the left of vertex $i$ is smaller than the depth to the right, $0$ if the depths on either side are equal, and $-$ if the depth to the left is larger than the depth to the right.   
\end{itemize}
\end{definition}

\begin{remark}
Note that the paths used to define depth are not permitted to pass through vertices of the web. Moreover, depth is well-defined because it is the distance between vertices in the web's dual graph (considering the web as a planar graph).
\end{remark}

The boundary word for an $\mathfrak{sl}_2$ web uses only $+$ and $-$. Figures  \ref{fig:unreduced} and \ref{fig:sl2band}  show examples of bottom webs and their boundary words. 

The next definitions are adapted from Petersen-Pylyavskky-Rhoades \cite{PPR}.

\begin{figure}[ht]
\begin{tikzpicture}[scale=.75]
\draw[style=dashed, <->] (.5,0)--(9.5,0);
\draw[radius=.08, fill=black](1,0)circle;
\draw[radius=.08, fill=black](2,0)circle;

\draw[radius=.08, fill=black](3,0)circle;
\draw[radius=.08, fill=black](4,0)circle;
\draw[radius=.08, fill=black](5,0)circle;
\draw[radius=.08, fill=black](6,0)circle;
\draw[radius=.08, fill=black](7,0)circle;
\draw[radius=.08, fill=black](8,0)circle;
\draw[radius=.08, fill=black](9,0)circle;

\draw[radius=.08, fill=black](4.5,.5)circle;
\draw[radius=.08, fill=black](4.5,1)circle;
\draw[radius=.08, fill=black](5,1.5)circle;
\draw[radius=.08, fill=black](4,1.5)circle;
\draw[radius=.08, fill=black](4.5,2)circle;
\draw[radius=.08, fill=black](4.5,2.5)circle;
\draw[radius=.08, fill=black](5.5,2)circle;
\draw[radius=.08, fill=black](5.5,2.5)circle;
\draw[radius=.08, fill=black](5,3)circle;
\draw[radius=.08, fill=black](5,3.5)circle;
\draw[radius=.08, fill=black](6.5,.5)circle;

\node at (1,-.4) {$+$};
\node at (2,-.4) {$+$};
\node at (3,-.4) {$+$};
\node at (4,-.4) {$+$};
\node at (5,-.4) {$-$};
\node at (6,-.4) {0};
\node at (7,-.4) {$-$};
\node at (8,-.4) {$-$};
\node at (9,-.4) {$-$};
\begin{scope}[thick,decoration={
    markings,
    mark=at position 0.5 with {\arrow{>}}}
    ] 
   \draw[postaction={decorate}] (3,0)--(4,1.5);
   \draw[postaction={decorate}] (4,0)--(4.5,.5);
   \draw[postaction={decorate}] (5,0)--(4.5,.5);
   \draw[postaction={decorate}] (4.5,1)--(4.5,.5);
   \draw[postaction={decorate}] (4.5,1)--(4,1.5);
   \draw[postaction={decorate}] (4.5,1)--(5,1.5);
   \draw[postaction={decorate}] (4.5,2)--(4,1.5);
   \draw[postaction={decorate}] (4.5,2)--(5,1.5);
   \draw[postaction={decorate}] (4.5,2)--(4.5,2.5);
   \draw[postaction={decorate}] (5.5,2)--(5,1.5);
   \draw[postaction={decorate}] (5.5,2)--(5.5,2.5);
   \draw[postaction={decorate}] (5.5,2)--(6.5,.5);
   \draw[postaction={decorate}] (6,0)--(6.5,.5);
   \draw[postaction={decorate}] (7,0)--(6.5,.5);
   \draw[postaction={decorate}] (5,3)--(4.5,2.5);
   \draw[postaction={decorate}] (5,3)--(5.5,2.5);
   \draw[postaction={decorate}] (5,3)--(5,3.5);
   \draw[postaction={decorate}] (2,0)--(4.5,2.5);
   \draw[postaction={decorate}] (8,0)--(5.5,2.5);
   \draw[postaction={decorate}] (1,0)--(5,3.5);
   \draw[postaction={decorate}] (9,0)--(5,3.5);
\end{scope}

\node at (3,2.25) {\tiny{$0$}};
\node at (1.85, .5) {\tiny{$1$}};
\node at (2.85, .5) {\tiny{$2$}};
\node at (3.85, .5) {\tiny{$3$}};
\node at (4.5, .25) {\tiny{$4$}};
\node at (4.5, 1.5) {\tiny{$3$}};
\node at (5.5, .5) {\tiny{$3$}};
\node at (6.5, .25) {\tiny{$3$}};
\node at (5, 2.25) {\tiny{$2$}};
\node at (7.15, .5) {\tiny{$2$}};
\node at (8.15, .5) {\tiny{$1$}};

\end{tikzpicture}
\caption{An unreduced $\mathfrak{sl}_3$ web and its boundary word.}\label{fig:unreduced}
\end{figure}
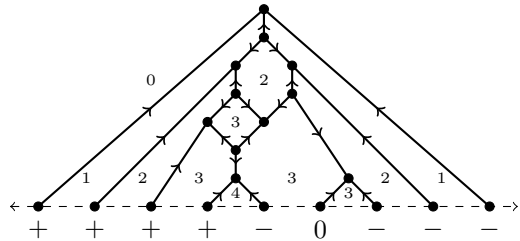 

\begin{definition}
Let $S$ be a string of symbols in the alphabet $\{s_1, \ldots, s_n\}$. Given $1\leq i,j\leq n$, we say $S$ is {\it $(s_is_j)$-balanced} if it has the same number of $s_i$ and $s_j$ symbols and {\it $(s_is_j)$-Yamanouchi} if, reading left to right, the number of $s_j$ symbols at any point does not exceed the number of $s_i$ symbols. 
\end{definition}

Note that the order of symbols matters in the definition of Yamanouchi but not balanced. In other words $(s_is_j)$-balanced is equivalent to $(s_js_i)$-
balanced, but $(s_is_j)$-Yamanouchi is not the same as $(s_js_i)$-Yamanouchi.

As we will see, boundary words are key to relating webs to tableaux. The next two results are proven in some form in various works \cite{KK,PPR,T}. They also follow from the discussion on band diagrams for half plane graphs in Section \ref{sec:halfplane} below. 

\begin{proposition}
The boundary words for webs in $\mathcal{W}_{2n}$ are in the alphabet $\{+,-\}$ and are both $(+-)$-balanced and $(+-)$-Yamanouchi. 
\end{proposition}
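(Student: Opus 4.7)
The plan is to exploit the crossingless matching structure of $\mathfrak{sl}_2$ bottom webs directly. A web in $\mathcal{W}_{2n}$ consists of $n$ noncrossing arcs pairing the $2n$ boundary points, together with possibly some closed circles in the interior.

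First I would establish a nesting description of depth: for any face $F$, the depth of $F$ equals the number of arcs whose endpoints lie on opposite sides of $F$ on the $x$-axis, plus the number of interior circles enclosing $F$. This follows from planar duality, since a shortest path in the dual graph from $F$ to the unbounded face may be taken to cross only edges that genuinely separate $F$ from infinity, and in a noncrossing arc diagram these are exactly the enclosing arcs and circles.

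Next I would apply this at a boundary vertex $i$. Let $e_i$ be the unique arc incident to $i$, pairing $i$ with some $j$. The faces immediately to the left and right of $i$ are enclosed by the same set of circles (a circle in the interior is either enclosing a neighborhood of $i$ or not, and cannot tell the two local sides apart) and by the same arcs other than $e_i$: by non-crossing, every arc distinct from $e_i$ lies entirely to one side of $i$ or has both endpoints straddling $i$, and in the latter case it encloses both local neighborhoods of $i$. The arc $e_i$ itself encloses exactly one of the two local faces, namely the one lying strictly between $i$ and $j$. Hence the two depths differ by exactly $1$, yielding symbol $+$ when $j>i$ and $-$ when $j<i$; in particular no $0$ occurs, so the boundary word lies in the alphabet $\{+,-\}$.

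Finally, balance and Yamanouchi follow from elementary matching combinatorics. Each of the $n$ arcs contributes one $+$ (at its smaller endpoint) and one $-$ (at its larger endpoint), giving exactly $n$ of each symbol. For any prefix of length $k$, the number of $+$'s equals the number of arcs whose smaller endpoint is $\leq k$, and the number of $-$'s equals the number of arcs whose larger endpoint is $\leq k$; since an arc's larger endpoint being $\leq k$ forces its smaller endpoint to be $\leq k$ as well, every prefix satisfies $\#(-)\leq\#(+)$. The main technical obstacle is the nesting formula for depth in the first step: one must rule out dual-graph shortcuts that bypass some enclosing arc, which is precisely where non-crossing of the matching is used, since each enclosing arc cleanly separates the diagram into an inside and an outside that share no further edges.
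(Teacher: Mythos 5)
Your proof is correct, but it takes a genuinely different route from the paper. The paper does not argue directly from the matching structure: it notes that every $\mathfrak{sl}_2$ bottom web is a half-plane graph and derives the statement from the general band-diagram discussion, specifically Lemma \ref{cor:bandword}, whose proof just tracks the depth profile along the $x$-axis --- depth starts and ends at $0$ at the unbounded face, changes by $0$ or $\pm 1$ across each boundary vertex, and is never negative --- which immediately yields $(+-)$-balancedness and the Yamanouchi condition; the absence of $0$ symbols for $\mathfrak{sl}_2$ comes from the separate observation (Subsection \ref{subsec:bandweb}) that every arc of an $\mathfrak{sl}_2$ web separates faces of different depths. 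You instead prove a nesting formula for depth (number of enclosing arcs plus enclosing circles), conclude that the symbol at $i$ is $+$ or $-$ according to whether the partner of $i$ lies to its right or left, and then obtain balance and Yamanouchi by the classical ballot-style count that each arc contributes a $+$ at its smaller endpoint and a later $-$ at its larger endpoint. Your version is more explicit and self-contained, correctly handles the circle components that genuinely occur in $\mathcal{W}_{2n}$, and pinpoints exactly where noncrossing is used (the Jordan-curve separation step behind the depth formula); the paper's version is less explicit but applies verbatim to $\mathfrak{sl}_3$ webs and arbitrary half-plane graphs, which is why the paper routes the proof through that generality. One small inaccuracy: a circle component can never enclose either face adjacent to a boundary vertex, since its interior is disjoint from the axis, so your parenthetical about a circle ``enclosing a neighborhood of $i$'' is vacuous --- but this only strengthens your claim that both local faces are enclosed by the same set of circles, so it is a cosmetic issue rather than a gap.
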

\begin{proposition}\label{prop:sl3webword}
The boundary words for webs in $\mathcal{W}_{3n}$ are in the alphabet $\{+,0,-\}$ and are both $(+-)$-balanced and $(+-)$-Yamanouchi.  Reduced webs in $\mathcal{W}_{3n}$ have boundary words that are also $(+0)$ and $(0-)$-balanced as well as $(+0)$ and $(0-)$-Yamanouchi.
\end{proposition}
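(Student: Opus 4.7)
The plan is to handle the assertion in two stages. First, we deduce the alphabet and the $(+-)$-conditions for arbitrary webs in $\mathcal{W}_{3n}$ from elementary properties of depth. For the alphabet, note that at each boundary vertex $i$, the faces immediately to the left and right of the single edge at $i$ are adjacent in the dual graph, so their depths differ by at most $1$ by the triangle inequality; hence each boundary symbol lies in $\{+,0,-\}$. For the $(+-)$-balanced condition, let $d_i$ denote the depth of the face immediately to the right of vertex $i$, with $d_0 = 0$ denoting the depth just to the left of vertex $1$. Both $d_0$ and $d_{3n}$ equal $0$, since the extreme faces are portions of the unbounded face. Since each symbol $+$, $0$, $-$ contributes $+1$, $0$, $-1$ respectively to $d_i - d_{i-1}$, the telescoping sum gives $\#+ - \#- = d_{3n} - d_0 = 0$. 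For the $(+-)$-Yamanouchi condition, observe that each partial depth $d_k$ equals $\#+ - \#-$ in the prefix of length $k$, and depths are non-negative integers.

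For the additional $(+0)$- and $(0-)$-conditions in the reduced case, the cleanest route I would take is to invoke the classical bijection between reduced $\mathfrak{sl}_3$ webs in $\mathcal{W}_{3n}$ and standard Young tableaux of rectangular shape $3 \times n$, as developed by Khovanov--Kuperberg and made explicit in subsequent work by Tymoczko and others. Under this bijection, the boundary word of a reduced web translates to the reading word of the corresponding tableau, where entries in rows $1$, $2$, $3$ are relabeled as $+$, $0$, $-$ respectively. A standard Young tableau of shape $3 \times n$ has exactly $n$ entries in each row, so its reading word is automatically $(+0)$- and $(0-)$-balanced, and the column-strictness of the tableau translates precisely into the Yamanouchi conditions on the reading word for the pairs $(+0)$ and $(0-)$.

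The main obstacle is properly invoking this bijection with the correct translation between boundary words and reading words, which depends critically on the non-elliptic hypothesis and is not entirely formal. An alternative self-contained approach I would consider is to use the canonical three-coloring of faces available for a reduced $\mathfrak{sl}_3$ web (consequence of the orientation and trivalent source/sink structure at internal vertices) and track color transitions at boundary vertices; however, since color and depth are not directly related modulo $3$, this still requires careful bookkeeping to convert the color structure into constraints on depth transitions. As the paper itself indicates in the statement of this proposition, the cleanest self-contained proof arises from the band diagram framework developed in Section~\ref{sec:halfplane}, which reframes these properties as general features of half-plane graphs rather than webs specifically and makes the non-elliptic input transparent.
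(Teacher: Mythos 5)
Your proposal matches the paper's own treatment: the $(+-)$ alphabet, balance, and Yamanouchi claims are proved there by exactly your depth/telescoping argument (Lemma \ref{cor:bandword}, stated for general half-plane graphs), while the $(+0)$ and $(0-)$ claims for reduced webs are, as in your second stage, deferred to the known web--tableau correspondence of Khovanov--Kuperberg, Petersen--Pylyavskyy--Rhoades, and Tymoczko rather than proved from scratch. Your caveat about the non-elliptic hypothesis is the right one: what makes the appeal to the bijection non-circular (since the paper's map $\psi$ is itself defined from the boundary word) is that those works independently establish that the depth-defined boundary word of a reduced web is a dominant balanced Yamanouchi word.
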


\subsection{Tableaux and Webs} \label{sub:tabweb}
We begin with terminology and notation related to Young tableaux. An interested reader can find this information and much more in Fulton's book \cite{MR1464693}. More information on the maps between tableaux and webs described below can be found in, e.g., \cite{KK, PPR,MR3119361, T}.

\begin{definition}
Given $m\in \mathbb{N}$ and a partition $\lambda = (\lambda_1, \ldots, \lambda_t)\vdash m$, we define the following terms. 
\begin{itemize}
\item The {\it Young diagram of shape $\lambda$} is a top- and left-justified collection of $m$ boxes in which the $i^{th}$ row has $\lambda_i$ boxes for each $i$. 
\item A {\it Young tableau of shape $\lambda$} is  a filling of the boxes of the Young diagram of shape $\lambda$ with the numbers $1$ through $m$ each occurring exactly once. 
\item A {\it standard Young tableau of shape $\lambda$} is a Young tableau of shape $\lambda$  in which the numbers increase from left to right along rows and top to bottom along columns.
\end{itemize}
We denote the set of all standard Young tableaux of shape $\lambda$ by $SYT(\lambda)$.
\end{definition}

Let $\mathcal{W}_{2n}^R$ be the set of reduced webs in $\mathcal{W}_{2n}$.  Define the map $\psi: \mathcal{W}_{2n}^R \rightarrow SYT(n,n)$ so that $\psi(w)$ is the tableau obtained by writing $i$ in the top (respectively bottom) row if the $i^{th}$ symbol in the boundary word for $w$ is $+$ (respectively $-$). It is widely known that the map $\psi$ is a well-defined bijection \cite[items (n) and (ww)]{MR1676282}.

To find the web $\psi^{-1}(T)$ for a standard tableau $T\in SYT(n,n)$,  first construct a word in the alphabet $\{+,-\}$ from $T$ such that the $i^{th}$ symbol is $+$ if $i$ is in the top row of $T$ and $-$ if $i$ is in the bottom row. This word will be $(+-)$-balanced and $(+-)$- Yamanouchi by construction. There is a unique reduced $\mathfrak{sl}_2$ web on $2n+0$ boundary points with that boundary word.  Figure \ref{fig:sl2band} gives an example. 

\begin{figure}[ht]
\begin{tikzpicture}[scale=.5]

\draw[dashed, <-] (-1,0)--(1,0);
\draw[black] (0,0)--(5,0);
\draw[dashed, ->] (4,0)--(6,0);
\draw[fill=black,radius=4pt] (0,0)circle;
\draw[fill=black,radius=4pt] (1,0)circle;
\draw[fill=black,radius=4pt] (2,0)circle;
\draw[fill=black,radius=4pt] (3,0)circle;
\draw[fill=black,radius=4pt] (4,0)circle;
\draw[fill=black,radius=4pt] (5,0)circle;

\draw[ black, thick] (0,0) to[out=90,in=180] (2.5,2) to[out=0,in=90] (5,0);
\draw[ black, thick] (1,0) to[out=90,in=180] (1.5,.75) to[out=0,in=90] (2,0);
\draw[ black, thick] (3,0) to[out=90,in=180] (3.5,.75) to[out=0,in=90] (4,0);

\node at (2.5,1.25) {\tiny{1}};
\node at (2.5,2.5) {\tiny{0}};
\node at (1.5,.4) {\tiny{2}};
\node at (3.5,.4) {\tiny{2}};

\node at (0,-.5) {\tiny{+}};
\node at (1,-.5) {\tiny{+}};
\node at (2,-.5) {\tiny{$-$}};
\node at (3,-.5) {\tiny{$+$}};
\node at (4,-.5) {\tiny{$-$}};
\node at (5,-.5) {\tiny{$-$}};

\node at (7,1) {$\stackrel{\psi}{\longrightarrow}$};

\node at (10,1) {\Large{\young(124,356)}};
\end{tikzpicture}
\caption{An $\mathfrak{sl}_2$ web, boundary word, and tableau} \label{fig:sl2band}
\end{figure}
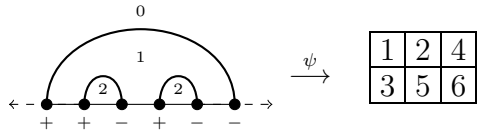

Analogously, let $\mathcal{W}_{3n}^R$ be the set of reduced webs in $\mathcal{W}_{3n}$ and define the map $\psi: \mathcal{W}_{3n}^R \rightarrow SYT(n,n,n)$ such that $\psi(w)$ is the tableau obtained by writing $i$ in the top (respectively middle, bottom) row if the $i^{th}$ symbol in the boundary word for $w$ is $+$ (respectively 0, $-$).  

Explicitly constructing $\psi^{-1}(T)$ for $T\in SYT(n,n,n)$ involves an intermediate object called an $M$-diagram \cite{T}.  As in the $\mathfrak{sl}_2$ case, begin by constructing a word in $\{+,-,0\}$ with $+$ corresponding to top row, $0$ corresponding to middle, and $-$ to bottom of $T$. Since $T$ is standard, the resulting word is $(+,0)$, $(+,-)$ and $(0,-)$- balanced as well as $(+,0)$, $(+,-)$ and $(0,-)$-Yamanouchi. 

Next, label the vertices $1, \ldots 3n$ on a horizontal axis with the word coming from $T$. The $M$-diagram for $T$ consists of pairs of crossingless matchings on these labeled points: the vertices labeled $+$ and $0$ are matched with left endpoints labeled $+$ and right endpoints labeled $0$; and those labeled $0$ and $-$ are matched with left endpoints labeled $0$ and right endpoints labeled $-$. 
Note that each matching is crossingless, but the arcs from one matching can intersect the other. As we will see later, $M$-diagrams are not only useful when describing $\psi^{-1}$ but also give a partial order on reduced $\mathfrak{sl}_3$ webs.

\begin{figure}[ht]
\scalebox{1}{\begin{tikzpicture}[baseline=0cm, scale=.6]
\node at (0,0) {\young(13,25,46)};
\node at (1.5,0) {$\longrightarrow$};
\end{tikzpicture}
 \begin{tikzpicture}[baseline=0cm, scale=.6]
\draw[style=dashed, <->] (3,-.7)--(9,-.7);
\draw[radius=.08, fill=black](3.5,-.7)circle;
\draw[radius=.08, fill=black](4.5,-.7)circle;
\draw[radius=.08, fill=black](5.5,-.7)circle;
\draw[radius=.08, fill=black](6.5,-.7)circle;
\draw[radius=.08, fill=black](7.5,-.7)circle;
\draw[radius=.08, fill=black](8.5,-.7)circle; 
\node at (3.5,-1.1) {$+$};
\node at (4.5,-1.1) {0};
\node at (5.5,-1.1) {$+$};
\node at (6.5,-1.1) {$-$};
\node at (7.5,-1.1) {0};
\node at (8.5,-1.1) {$-$};
\draw (4.5,-.7) arc (0:180: .5cm);
\draw (7.5,-.7) arc (0:180: 1cm); 
\draw (6.5,-.7) arc (0:180: 1cm);
\draw (8.5,-.7) arc (0:180: .5cm);  
\node at (9.75,0) {$\longrightarrow$};
\end{tikzpicture} 
\raisebox{-13pt}{\begin{tikzpicture}[baseline=0cm, scale=0.6]
\draw[style=dashed, <->] (2.5,0)--(8.5,0);
\draw[radius=.08, fill=black](3,0)circle;
\draw[radius=.08, fill=black](4,0)circle;
\draw[radius=.08, fill=black](5,0)circle;
\draw[radius=.08, fill=black](6,0)circle;
\draw[radius=.08, fill=black](7,0)circle;
\draw[radius=.08, fill=black](8,0)circle;
\draw[style=thick,->](4, 0) -- (4,.5);
\draw[style=thick](4,.5)--(4,1);
\draw[radius=.08, fill=black](4,1)circle;
\draw[style=thick,->](3,0)--(3.5,.5);
\draw[style=thick](3.5,.5)--(4,1);
\draw[style=thick,->](7, 0) -- (7,.5);
\draw[style=thick](7,.5)--(7,1);
\draw[radius=.08, fill=black](7,1)circle;
\draw[style=thick,->](8,0)--(7.5,.5);
\draw[style=thick](7.5,.5)--(7,1);
\draw[radius=.08, fill=black](5.5,1)circle;
\draw[style=thick,-<](5.5,1)--(5.5,1.5);
\draw[style=thick](5.5,1.5)--(5.5,2);
\draw[radius=.08, fill=black](5.5,2)circle;
\draw[style=thick,->](5,0)--(5.25,.5);
\draw[style=thick](5.25,.5)--(5.5,1);
\draw[style=thick,->](6,0)--(5.75,.5);
\draw[style=thick](5.75,.5)--(5.5,1);
\draw[style=thick,-<](4,1)--(4.75,1.5);
\draw[style=thick](4.75,1.5)--(5.5,2);
\draw[style=thick,-<](7,1)--(6.25,1.5);
\draw[style=thick](6.25,1.5)--(5.5,2);

\node at (3,-.4) {$+$};
\node at (4,-.4) {0};
\node at (5,-.4) {$+$};
\node at (6,-.4) {$-$};
\node at (7,-.4) {0};
\node at (8,-.4) {$-$};
\end{tikzpicture}}
}
\caption{Constructing an $\mathfrak{sl}_3$ web from a tableau}\label{fig:sl3ex}
\end{figure}
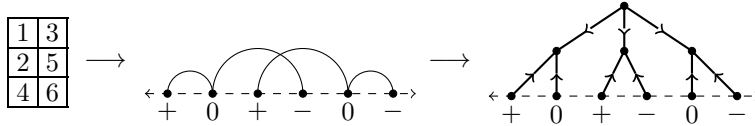 

To obtain a web from an $M$-diagram, orient all arcs of the $M$-diagram away from boundary vertices labeled $+$ and $-$, and then modify a small neighborhood of each crossing and each boundary vertex labeled $0$ as shown in Figure \ref{fig:MMod}.  Figure \ref{fig:sl3ex} has an example of a 3-row tableau and its companion diagrams.  

\begin{figure}[ht]
\raisebox{-12pt}{\begin{tikzpicture}[baseline=0cm, scale=.5]
\draw[style=dashed, <->] (-.5,0)--(3.5,0);
\node at (1.5,-.5) {$0$};
\draw[radius=.1, fill=black](1.5,0)circle;
\draw [style=thick, -<](1.5,0) arc (0:90: 1.2cm);
\draw[style=thick](.3,1.2) arc (90:110: 1.2cm);
\draw [style=thick, -<] (1.5,0) arc (180:90: 1.2cm);
\draw[style=thick](2.7,1.2) arc (90:70: 1.2cm);
\node at (4,.75) {$\rightarrow$};
\draw[style=dashed, <->] (4.25,0)--(7.25,0);
\draw[radius=.1, fill=black](5.75,0)circle;
\draw[radius=.1, fill=black](5.75,1)circle;
\draw[style=thick,->](5.75,0) -- (5.75,.5);
\draw[style=thick](5.75,.5) -- (5.75,1);
\draw[style=thick, -<] (5.75,1) -- (5.25,1.1);
\draw[style=thick](5.25,1.1) -- (4.75,1.2);
\draw[style=thick, -<] (5.75,1) -- (6.25,1.1);
\draw[style=thick](6.25,1.1) -- (6.75,1.2);
\end{tikzpicture}}
\hspace{.5in}
\raisebox{-3pt}{\begin{tikzpicture}[baseline=0cm, scale=0.4]
\draw[style=thick, ->] (-1,-1)--(-.5,-.5);
\draw[style=thick, -<] (-1,1)--(-.5,.5);
\draw[style=thick, -<] (-.5,.5)--(.5,-.5);
\draw[style=thick, ->] (-.5,-.5)--(.5,.5);
\draw[style=thick] (.5,.5)--(1,1);
\draw[style=thick] (.5,-.5)--(1,-1);
\node at (2,0) {$\rightarrow$};

\draw[style=thick, ->] (4,.5)--(4,0);
\draw[style=thick] (4,0)--(4,-.5);
\draw[style=thick, ->] (4,.5)--(3.5,.75);
\draw[style=thick] (3.5,.75)--(3,1);
\draw[style=thick, ->] (4,.5)--(4.5,.75);
\draw[style=thick] (4.5,.75)--(5,1);
\draw[style=thick, -<] (4,-.5)--(3.5,-.75);
\draw[style=thick] (3.5,-.75)--(3,-1);
\draw[style=thick, -<] (4,-.5)--(4.5,-.75);
\draw[style=thick] (4.5,-.75)--(5,-1);
\draw[ radius=.12, fill=black] (4,-.5) circle;
\draw[ radius=.12, fill=black] (4,.5) circle;
\end{tikzpicture}}
\caption{Constructing a web from an oriented $M$-diagram}\label{fig:MMod}
\end{figure}
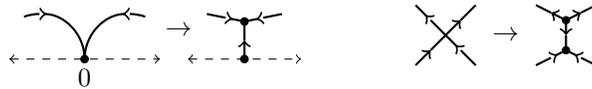 

\section{Half-plane graphs}\label{sec:halfplane}
Webs were constructed as graphs that model representation-theoretic phenomena, yet many of the graph-theoretic tools used on webs are applicable in a wider setting.  In this section, we generalize webs to a larger family of what we call {\em half-plane graphs} and give a preliminary analysis of their graph-theoretic properties. 

More specifically, we define half-plane graphs, construct a structure called a {\em band diagram} that summarizes the nesting structure of a half-plane graph, establish some general properties about band diagrams, and then specialize to the case of webs. 

\subsection{Band diagrams for half-plane graphs}
We start by defining half-plane graphs, band diagrams, and some of their fundamental properties.  Recall that a {\em plane} graph is a planar graph with a choice of embedding.  

\begin{definition}A {\it half-plane graph $G^+$ with $n$ boundary vertices} is a simple, plane graph with vertex set $V= \{1,2,\dots,n\} \cup V_{int}$ and edge set 
$E$ with the properties that
\begin{enumerate}
\item $E$ contains exactly one edge incident to each vertex $\{1,2,\dots,n\}$.
\item Vertices in $V_{int}$ are trivalent.
\item The graph can be drawn so that
\begin{itemize}
\item the vertices $\{1,2,\dots,n\}$ are the associated integers on the $x$-axis of the plane, and 
\item the vertices $V_{int}$ are all in the the upper half of the plane.
\end{itemize}
\end{enumerate}
The {\it faces of $G^+$} are the path-connected, open subsets of the complement of $G^+$ in the upper half-plane. 
\end{definition}

\begin{remark}
While our results about half-plane graphs require a choice of embedding, there is a large topological equivalence class of embeddings related via deformation.  As is usual in graph theory, we typically consider half-plane graphs up to this equivalence.  
\end{remark}

Bottom webs for both $\mathfrak{sl}_2$ and $\mathfrak{sl}_3$ are examples of half-plane graphs. In fact, a half-plane graph for which $V_{int} = \emptyset$ is a $\mathfrak{sl}_2$ web. Any bipartite half-plane graph admits a choice of edge orientations that makes it an $\mathfrak{sl}_3$ web. 
Figure \ref{fig:weirdo} shows an example of a half-plane graph that is not a web.

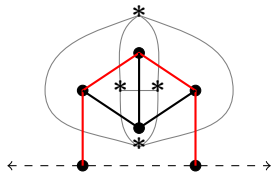
\begin{figure}[ht]
\begin{tikzpicture}[scale=.5]
\draw[gray] (2.5,.5) to[out=160,in=270] (0,2) to[out=90,in=200] (2.5,4);
\draw[gray] (2.5,.5) to[out=20,in=270] (5,2) to[out=90,in=-20] (2.5,4);
\draw[gray] (2.5,.5) to[out=150,in=270] (2,2);
\draw[gray] (2.5,.5) to[out=30,in=270] (3,2);
\draw[gray] (2.5,4) to[out=210,in=90] (2,2);
\draw[gray] (2.5,4) to[out=-30,in=90] (3,2);
\draw[gray] (2,2)--(3,2);

\draw[dashed, <-] (-1,0)--(2,0);
\draw[dashed, ->] (2,0)--(6,0);
\draw[fill=black,radius=4pt] (1,0)circle;
\draw[fill=black,radius=4pt] (4,0)circle;
\draw[fill=black,radius=4pt] (1,2)circle;
\draw[fill=black,radius=4pt] (4,2)circle;
\draw[fill=black,radius=4pt] (2.5,1)circle;
\draw[fill=black,radius=4pt] (2.5,3)circle;
\draw[thick, red] (1,0)--(1,2)--(2.5,3)--(4,2)--(4,0);
\draw[thick, black] (1,2)--(2.5,1)--(4,2);
\draw[thick,black] (2.5,1)--(2.5,3);

\node at (2.5,.5) {{\bf *}};
\node at (2.5,4) {{\bf *}};
\node at (2,2) {{\bf *}};
\node at (3,2) {{\bf *}};
\end{tikzpicture}
\caption{A half-plane graph and its dual.}\label{fig:weirdo}
\end{figure}

We constructed half-plane graphs so that they have an unbounded face at the top and the boundary line at the bottom.  Thus, we will now define the {\em depth} of faces in a half-plane graph to describe, informally, how deeply nested they are in the graph; the unbounded face has depth zero.  To formalize this definition, we use the dual graph $(G^+)^*$ of the planar graph $G^+$.

\begin{definition}
Let $G^+$ be a half-plane graph.  Given a face $F$ of $G^+$, the {\it depth of $F$} is the distance in the dual graph $(G^+)^*$
 from the vertex $F^*$ to the vertex corresponding to the unbounded face of $G^+$.\end{definition} 
 
\begin{remark}  Recall that distance between two vertices in a graph is the minimal path length between those vertices.  Depth is well-defined and finite because the dual $(G^+)^*$ of a planar graph is well-defined and connected.
\end{remark}

We can now define bands, which are the unions of faces at the same depth, as well as some other associated terms.

\begin{definition} Associated to a half-plane graph $G^+$, we define the following subsets of the upper half-plane.
\begin{itemize}
\item Given a nonnegative integer $d$, the {\it $d$-band of $G^+$} is the closure of the union of all faces of depth $d$. (We refer to the connected components of the $d$-band as the {\it components of the $d$-band}.)
\item Given a nonnegative integer $d$, the {\it $d$-arc of $G^+$} is the intersection of the $d$-band and the $(d-1)$-band. (We refer to the connected components of the $d$-arc as the {\it components of the $d$-arc}.)
\item The {\it band diagram for $G^+$} is the union of the $d$-arcs of $G^+$ over all nonnegative integers $d$ and is denoted $B(G^+)$.
\item The {\it anchored band diagram for $G^+$} is the band diagram for $G^+$ minus any closed components and is denoted $B_A(G^+)$. (If a band diagram has no closed components, in other words $B(G^+) = B_A(G^+)$, we say the band diagram is {\it anchored}.)
\end{itemize}
\end{definition}

As an example, the $0$-band of the half-plane graph shown in Figure \ref{fig:weirdo} is the closure of the unbounded face, and the $1$-band is closure of the union of all bounded faces. The $1$-arc, band diagram, and anchored band diagram are all equal in this case and consist of the red edges in Figure \ref{fig:weirdo}, namely the edges of the graph adjacent to both the unbounded face and any bounded face.

Say $i$ is a boundary vertex for a half-plane graph $G^+$. There is a unique edge incident to $i$ that separates two faces. Either these faces both have depth $d$ or the depths of these faces are $d$ and $d-1$ (in some order). We use the relationship between these faces to construct a boundary word for a half-plane graph.

\begin{definition}
Given a half-plane graph $G^+$, label its boundary vertices $+$, $0$, or $-$ based on whether depth of the face immediately to the left of the vertex is smaller, the same, or larger than the depth to the right.  The {\it boundary word} of a half-plane graph is the resulting string of symbols read from left to right. 
\end{definition}\label{signs}

Some properties of the boundary word associated to a web only use the weaker hypothesis of a half-plane graph.  The next lemma proves two such properties.

\begin{lemma}\label{cor:bandword}
The boundary word for a half-plane graph is $(+-)$-balanced and $(+-)$-Yamanouchi.
\end{lemma}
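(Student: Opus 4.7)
The plan is to interpret the boundary word as recording the successive changes in depth along the sequence of faces of $G^+$ that touch the $x$-axis. For each $0 \le i \le n$, let $F_i$ denote the face of $G^+$ adjacent to the portion of the $x$-axis strictly between vertex $i$ and vertex $i+1$, with $F_0$ the face immediately to the left of vertex $1$ and $F_n$ the face immediately to the right of vertex $n$. First I would verify that $F_i$ is well defined: the only vertices on the $x$-axis are the boundary vertices $\{1,\ldots,n\}$, and the embedding places $V_{int}$ in the upper half-plane with edges drawn in the closed upper half-plane, so no edge meets the interior of the strip between consecutive boundary vertices, and that strip lies on the boundary of a single face.

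Next I would show that the $i$th symbol of the boundary word equals $\mathrm{depth}(F_i) - \mathrm{depth}(F_{i-1})$ under the convention $+ \mapsto +1$, $0 \mapsto 0$, $- \mapsto -1$. The unique edge $e_i$ at vertex $i$ separates $F_{i-1}$ locally from $F_i$. Either these are the same face, in which case the depths agree and the definition assigns $0$, or they are distinct faces sharing $e_i$ and hence adjacent in $(G^+)^*$, in which case their depths differ by exactly $1$ and the sign convention in the definition of the boundary word matches the signed difference.

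With this dictionary in hand, both claims follow by telescoping, once I observe that the unbounded face contains the far-left and far-right open ends of the strip along the $x$-axis, so $F_0 = F_n$ is the unbounded face and $\mathrm{depth}(F_0) = \mathrm{depth}(F_n) = 0$. The $(+-)$-balanced condition follows from
\[
\#(+) - \#(-) \;=\; \sum_{i=1}^{n}\bigl(\mathrm{depth}(F_i) - \mathrm{depth}(F_{i-1})\bigr) \;=\; \mathrm{depth}(F_n) - \mathrm{depth}(F_0) \;=\; 0.
\]
The $(+-)$-Yamanouchi condition follows because the same telescoping applied to any length-$k$ prefix gives $\#(+) - \#(-) = \mathrm{depth}(F_k) \ge 0$, since depths are non-negative by definition.

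The main obstacle I anticipate is the well-definedness of the axis faces $F_i$, which requires pinning down the (implicit) convention that edges of a half-plane graph are embedded in the closed upper half-plane and do not cross the $x$-axis at non-vertex points; once that is established, everything is bookkeeping and telescoping.
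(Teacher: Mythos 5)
Your argument is correct and is essentially the paper's proof: both track the depths of the $n+1$ faces touching the $x$-axis, which begin and end at zero at the unbounded face and change by $0$ or $\pm 1$ at each boundary vertex, so telescoping gives $(+-)$-balancedness and nonnegativity of depth gives the $(+-)$-Yamanouchi condition. One small imprecision: two distinct faces adjacent across the edge at vertex $i$ can have equal depth (so the depths differ by at most one, not exactly one), but this does not affect your dictionary, since the boundary word assigns $0$ whenever the depths agree.
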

\begin{proof}
A half-plane graph with $n$ boundary vertices has $n+1$ faces that touch the $x$-axis. Consider the depths of these faces as we traverse the axis from left to right.  Since the first and last face is unbounded, depth begins and ends at zero.  By definition, depth is never negative and depth increases or decreases by one or zero at each boundary point. Hence the boundary word must be $(+-)$-balanced. 
Moreover, the number of $+$ symbols at any point in the boundary word must always be greater than or equal to the number of $-$ symbols, again because depth is nonnegative. We conclude that the boundary word is $(+-)$-Yamanouchi.
\end{proof}

The next series of results proves properties of the bands and arcs associated to half-plane graphs.  They culminate in Corollary \ref{cor:anchored}, which generalizes the statement that boundary words determine webs to say that boundary words determine the (anchored) band diagram of half-plane graphs.

We begin by showing that the $d$-arcs of a half-plane graph are isotopic to a disjoint unions of arcs and circles (hence their name).

\begin{proposition} \label{prop:manifold}
The $d$-arc of a half-plane graph is a 1-manifold with boundary on the $x$-axis.
\end{proposition}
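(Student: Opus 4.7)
The strategy is to verify the 1-manifold property pointwise: every $p$ in the $d$-arc should have an open neighborhood in the arc homeomorphic either to an open interval or (if $p$ lies on the $x$-axis) to a half-open interval. The crucial preliminary is the observation that \emph{adjacent faces of a half-plane graph differ in depth by at most one}. This is immediate from the triangle inequality in the dual graph $(G^+)^*$: two faces sharing an edge are joined by a single dual edge, so their distances to the unbounded face differ by at most one.

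With this tool in hand, I would analyze $p$ by its location. If $p$ lies in the interior of an edge $e$ of the $d$-arc, then $e$ separates two faces of depths $d-1$ and $d$, and a neighborhood of $p$ on $e$ is trivially homeomorphic to $\mathbb{R}$. If $p$ is an internal trivalent vertex $v$, let the three surrounding faces have depths $a,b,c$ in cyclic order, and count the number of incident edges lying in the $d$-arc. The key claim is that this count is always $0$ or $2$: whenever $\{a,b\} = \{d-1,d\}$, the adjacency-depth bound forces $c \in \{d-1,d\}$ (since $c$ is adjacent to both), and then either $\{b,c\}$ or $\{c,a\}$ also equals $\{d-1,d\}$, contributing a second edge to the arc; simultaneously, the case of all three edges being in the arc is impossible, since $\{a,b\} = \{b,c\} = \{c,a\} = \{d-1,d\}$ would force $a,b,c$ to take only two distinct values. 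In the 0-edge case, the same depth bound shows $\{a,b,c\}$ cannot contain both $d-1$ and $d$, so $v$ is not in the arc at all; in the 2-edge case, $v$ has local degree two in the arc, giving an $\mathbb{R}$-neighborhood.

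The remaining cases on the $x$-axis are quick: a boundary vertex $p$ whose adjacent faces have depths $\{d-1,d\}$ sits on exactly one edge of the $d$-arc and provides an endpoint on the $x$-axis, while points strictly between consecutive boundary vertices touch only one face and are not in the $d$-arc at all. I expect the main obstacle to be the internal-vertex analysis, specifically the step that rules out a trivalent vertex with exactly one incident edge in the arc, since that configuration would otherwise produce an interior endpoint and violate the manifold property. The adjacency-depth lemma is doing all the real work there, forcing a second edge into the arc by the arithmetic of overlapping ``allowed depth'' sets $\{d-2,d-1,d\} \cap \{d-1,d,d+1\}$.
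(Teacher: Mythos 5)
Your proposal is correct and follows essentially the same route as the paper: both rest on the observation that faces sharing an edge have depths differing by at most one, and both use it to show that an internal trivalent vertex has either zero or exactly two incident edges in the $d$-arc, which forces each component to be an embedded path with endpoints on the $x$-axis or a closed curve. Your pointwise neighborhood check and explicit treatment of boundary vertices simply make explicit what the paper's case analysis (its Figure of local depth configurations) leaves implicit.
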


\begin{proof}
The $d$ and $d-1$ bands intersect along edges of $G^+$ so the $d$-arc is necessarily the union of a subset of edges of $G^+$. We will show that if one edge incident to an internal vertex is part of the $d$-arc then exactly two edges at that vertex are part of the $d$-arc. It follows from this fact that each $d$-arc component is either a closed one-manifold passing through a subset of the internal vertices of $G^+$ or a path between boundary vertices possibly passing through intermediate internal vertices.

The depths of faces that share an edge in $G^+$ can differ by at most one. Therefore either \begin{itemize}
\item[(i)] the depths of the three faces around an internal vertex are all the same or 
\item[(ii)] two of the three faces around an internal vertex are the same and the third depth differs by one. There are two ways this can happen, both shown in Figure \ref{localdepths}. 
\end{itemize} 
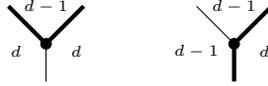
\begin{figure}[h]
\begin{tikzpicture}[scale=.5]
\draw[fill=black,radius=4pt] (0,0)circle;
\draw[ultra thick] (0,0)--(1,1);
\draw[ultra thick] (0,0)--(-1,1);
\draw (0,0)--(0,-1);
\node at (0,1) {\tiny{$d-1$}};
\node at (-.8,-.2) {\tiny{$d$}};
\node at (.8,-.2) {\tiny{$d$}};

\draw[fill=black,radius=4pt] (5,0)circle;
\draw[ultra thick] (5,0)--(6,1);
\draw (5,0)--(4,1);
\draw[ultra thick] (5,0)--(5,-1);
\node at (5,1) {\tiny{$d-1$}};
\node at (4,-.2) {\tiny{$d-1$}};
\node at (5.8,-.2) {\tiny{$d$}};
\end{tikzpicture}
\caption{The $d$-arc near an internal vertex}\label{localdepths}
\end{figure}

No edges incident to vertices of type (i) can be part of the $d$-arc. If an edge incident to a vertex of type (ii) is part of the $d$-arc, Figure \ref{localdepths} shows exactly two edges incident to that vertex are part of the $d$-arc.
\end{proof}

\begin{remark}
Trivalency is essential in Proposition \ref{prop:manifold}. To illustrate this, consider the example in Figure \ref{figure: nonexample of a half-plane graph}, which satisfies all properties of being a half-plane graph except trivalency of internal vertices. Since all edges separate faces of different depths, every edge of this graph would be part of the band diagram.

\begin{figure}[ht]
\begin{tikzpicture}[scale=.5]

\draw[dashed, <-] (-1,0)--(4,0);
\draw[dashed, ->] (4,0)--(6,0);
\draw[fill=black,radius=4pt] (1,0)circle;
\draw[fill=black,radius=4pt] (4,0)circle;
\draw[fill=black,radius=4pt] (1,2)circle;
\draw[fill=black,radius=4pt] (4,2)circle;
\draw[fill=black,radius=4pt] (2.5,1)circle;
\draw[fill=black,radius=4pt] (2.5,2)circle;
\draw[fill=black,radius=4pt] (2.5,3)circle;
\draw[thick, black] (1,0)--(1,2)--(2.5,1)--(4,2)--(4,0);
\draw[thick, black] (1,2)--(2.5,3)--(4,2);
\draw[thick,black] (1,2)--(4,2);

\node at (2.5,.5) {\tiny{$1$}};
\node at (2.5,1.5) {\tiny{$2$}};
\node at (2.5,2.5) {\tiny{$1$}};
\node at (1.5,3.25) {\tiny{$0$}};

\end{tikzpicture}
\caption{Nonexample of a half-plane graph}\label{figure: nonexample of a half-plane graph}
\end{figure}
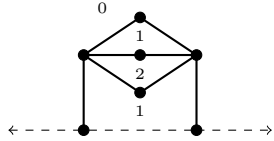
\end{remark}

Many half-plane graphs have anchored band diagrams, including reduced webs for $\mathfrak{sl}_2$ and $\mathfrak{sl}_3$ (see Subsection \ref{subsec:bandweb}).  In that case, the band diagram is a crossingless matching, as the following corollary states.

\begin{corollary}\label{cor:anchored}
The anchored band diagram of a half-plane graph is a crossingless matching on the boundary vertices labeled $+$ and $-$. 
\end{corollary}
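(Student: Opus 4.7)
The plan is to combine the structural result from Proposition \ref{prop:manifold} with planarity and the depth analysis underlying Lemma \ref{cor:bandword}. By Proposition \ref{prop:manifold}, the band diagram $B(G^+)$ is a disjoint union of $1$-manifolds (one for each $d \geq 1$), each with boundary on the $x$-axis. Deleting the closed components to form $B_A(G^+)$ therefore leaves a disjoint collection of arcs, all of whose endpoints lie among the boundary vertices $\{1,\dots,n\}$. The task is then to identify these endpoints precisely and check non-crossing.

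First I would identify the endpoints. Each boundary vertex $i$ has a unique incident edge $e_i$ of $G^+$; this edge separates two faces, whose depths differ by $0$ or $1$. By the definition of the boundary word, $i$ is labeled $0$ exactly when these depths agree, and labeled $+$ or $-$ exactly when they differ by one. In the latter case $e_i$ lies in the $d$-arc for the larger of the two depths, while in the former case $e_i$ lies in no $d$-arc. Hence an endpoint of a component of $B(G^+)$ on the $x$-axis occurs at vertex $i$ if and only if the label at $i$ is $+$ or $-$; moreover each such $i$ is an endpoint of exactly one arc, since only $e_i$ is incident to $i$. Since closed components of $B(G^+)$ have no boundary on the $x$-axis, the same identification of endpoints holds for $B_A(G^+)$. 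This already shows $B_A(G^+)$ is a matching on exactly the $+$- and $-$-labeled boundary vertices.

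Next I would verify that the matching is crossingless. The key observation is that distinct arcs of $B_A(G^+)$ are vertex-disjoint at every internal vertex of $G^+$. Indeed, because adjacent faces have depths differing by at most one, the three faces meeting at any internal trivalent vertex take depth values in some two-element set $\{d-1,d\}$ (the value set $\{d-1,d,d+1\}$ is excluded because it would force two faces with depths differing by $2$ to share an edge through that vertex). Hence at most one $d$-arc passes through any given internal vertex, and the case analysis in Proposition \ref{prop:manifold} shows exactly two edges of that $d$-arc meet there. Thus arcs from different depths, and also distinct arcs at the same depth, never share an internal vertex. Combined with the planarity of $G^+$—so that distinct edge-paths in $G^+$ cannot cross transversely in the upper half-plane—this means the components of $B_A(G^+)$ can only meet (at worst) at their endpoints on the $x$-axis, and each of those endpoints lies on exactly one arc. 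Therefore no two arcs cross, and $B_A(G^+)$ is a crossingless matching on the $+$ and $-$ boundary vertices.

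The main subtlety I expect is the ``no crossings'' step: one must rule out the configuration in which two $d$-arc paths swap places by meeting at a common internal vertex. This is precisely where trivalency is essential and is handled by the depth-parity restriction above (the same observation already underlies Proposition \ref{prop:manifold}). Everything else is a straightforward assembly of the facts already established: Proposition \ref{prop:manifold} supplies the $1$-manifold structure, the definition of boundary word gives the characterization of endpoints, and planarity of $G^+$ prevents extraneous crossings.
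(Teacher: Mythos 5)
Your proof is correct and follows essentially the same route as the paper: it rests on Proposition \ref{prop:manifold} for the $1$-manifold structure and then observes that the remaining components are pairwise disjoint arcs with endpoints on the axis, which is exactly a crossingless matching. The extra details you supply (identifying the endpoints as the $+$/$-$ vertices via the unique incident edge, and ruling out shared internal vertices using the two-value depth restriction at trivalent vertices) are just an expansion of what the paper's proof compresses into ``by construction.''
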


\begin{proof}
By construction, the anchored band diagram is the union of nonintersecting, connected 1-manifolds each of which is homeomorphic to an interval and has both endpoints on the axis of the half-plane graph. This is exactly a crossingless matching.
\end{proof}

We now show that the band diagram can be used to identify the vertices labeled $+$ and $-$ in the boundary word.

\begin{proposition} \label{prop:bandwordendpoints}
The left (respectively right) endpoints of the band diagram for a half-plane graph are labeled $+$ (respectively $-$).
\end{proposition}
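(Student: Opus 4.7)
The plan is to handle one arc component $C$ of the band diagram at a time: I will show its left endpoint is labeled $+$ and its right endpoint labeled $-$. Closed components contribute no endpoints, so by Proposition \ref{prop:manifold} I may assume $C$ is a simple arc that is a $d$-arc for some $d \geq 1$, with endpoints at boundary vertices $a < b$ on the $x$-axis. Let $R_{\mathrm{in}}$ denote the bounded region of the upper half-plane enclosed by the Jordan curve $C \cup [a,b]$. By definition of $d$-arc, each edge of $C$ separates one face of depth $d-1$ from one face of depth $d$.

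The key lemma I would establish is that every face of $G^+$ contained in $R_{\mathrm{in}}$ has depth at least $d$. Fix such a face $F$ with $\mathrm{depth}(F) = k$ and choose a shortest dual-graph path $F = F_0, F_1, \ldots, F_k = F_\infty$ to the unbounded face. Since consecutive dual-adjacent faces differ in depth by at most $1$ and the path has length $k$, one has $\mathrm{depth}(F_j) = k - j$ all along the path. Because $F_0$ lies inside $R_{\mathrm{in}}$ and $F_k$ lies outside, there is a smallest index $j$ for which $F_j$ is inside and $F_{j+1}$ is outside; the edge separating them lies on the boundary of $R_{\mathrm{in}}$, and since faces are open subsets of the strict upper half-plane (and therefore no two faces are dual-adjacent across the $x$-axis segment $[a,b]$), this edge must lie on $C$. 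Hence $\{\mathrm{depth}(F_j), \mathrm{depth}(F_{j+1})\} = \{d-1, d\}$, which forces $k - j = d$ and so $k \geq d$.

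Given the lemma, the proposition reduces to local inspection at each endpoint. Near $a$, the face $F_R$ immediately to the right of vertex $a$ contains points $(a+\epsilon, \delta)$ for all sufficiently small $\epsilon, \delta > 0$, and these lie inside $R_{\mathrm{in}}$, so by the lemma $\mathrm{depth}(F_R) \geq d$. But $F_R$ is adjacent to the edge at $a$, which lies on the $d$-arc, so $\mathrm{depth}(F_R) \in \{d-1, d\}$; combining these forces $\mathrm{depth}(F_R) = d$ and hence the face $F_L$ immediately to the left of $a$ has depth $d-1$. By the definition of the boundary word this places a $+$ at position $a$. A symmetric argument at vertex $b$, using that points $(b - \epsilon, \delta)$ for small $\epsilon, \delta > 0$ lie in $R_{\mathrm{in}}$, yields the label $-$.

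The step I expect to be the main obstacle is the justification that the first exit of the shortest path from $R_{\mathrm{in}}$ must occur across an edge of $C$ rather than ``slipping out'' along the segment $[a,b]$. This is the point where the assumption that faces live in the strict open upper half-plane is essential: it prevents any face from straddling a boundary-point interval on the $x$-axis, so dual-graph adjacencies really do correspond to crossings of edges of $G^+$. Once this geometric fact is in hand, the remainder is a clean matching of the linear depth sequence $k, k-1, \ldots, 0$ along the shortest path against the forced depth labels $\{d-1, d\}$ on the two sides of $C$.
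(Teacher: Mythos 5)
Your proposal is correct and follows essentially the same route as the paper: both arguments rest on the observation that any dual path from a face enclosed by an arc component of the band diagram to the unbounded face must cross that arc, so the bounded region below the arc is the deeper side, and the endpoint labels $+$ and $-$ then follow by reading depths left to right at the two endpoints. Your version simply makes this precise with an explicit Jordan-curve separation and the depth-decreases-by-one property of shortest dual paths, where the paper states the crossing argument more tersely.
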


\begin{proof}
Let $c$ be a curve in the anchored band diagram for a half-plane graph, so $c$ is a component of the $d$-arc for some $d$.  The curve $c$ connects two points on the $x$-axis and splits the upper half-plane into two regions -- one above (defined as the region containing the unbounded face) and one below $c$. 

The $d$-arc is the intersection of the $d$-band and $(d-1)$-band, so the faces adjacent to $c$ on one side have depth $d-1$ and on the other side have depth $d$. Every path to the unbounded face from a face adjacent to and below $c$ must cross $c$ and enter a face adjacent to and above $c$. It follows that the faces of depth $d$ are below $c$.  This means the left endpoint of $c$ is labeled $+$ because depth increases while the right endpoint is labeled $-$ because depth decreases.
\end{proof}

Given a $(+-)$-balanced, $(+-)$-Yamanouchi word in the alphabet $\{+,-\}$, we observed earlier that there is a unique crossingless matching pairing these points whose left endpoints are labeled $+$ and right endpoints are labeled $-$. Together with Proposition \ref{prop:bandwordendpoints}, this gives the next corollary.

\begin{corollary}\label{cor:unique}
The anchored band diagram of a half-plane graph is determined up to isotopy by its boundary word.
\end{corollary}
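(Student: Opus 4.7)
The plan is to reduce the corollary to a classical parenthesis-matching uniqueness statement. First I would note that by Corollary \ref{cor:anchored}, the anchored band diagram $B_A(G^+)$ is a crossingless matching on the boundary vertices labeled $+$ and $-$, and by Proposition \ref{prop:bandwordendpoints} every arc in this matching has its $+$ endpoint on the left and its $-$ endpoint on the right. Up to isotopy in the upper half-plane rel boundary, a crossingless matching on a fixed set of labeled boundary points is determined by its pairing (the $0$-labeled vertices are not endpoints of any arc, so they play no role beyond fixing positions on the $x$-axis). Thus the corollary reduces to showing that the pairing itself is uniquely determined by the boundary word.

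For the combinatorial uniqueness, I would induct on the number of $+/-$ pairs, ignoring the $0$ symbols since they are inert. Consider the leftmost $-$ in the boundary word. By the $(+-)$-Yamanouchi property, it is preceded by at least one $+$; let $p$ denote the rightmost such $+$, so no $+$ or $-$ symbol lies strictly between $p$ and this leftmost $-$. In any crossingless matching of $+$s to $-$s with $+$s on the left, $p$ must be paired with this leftmost $-$: if instead $p$ were paired with a later $-$, then the leftmost $-$ would have to be paired with a $+$ lying strictly to the left of $p$, and the resulting arc would cross the arc emanating from $p$. Removing this matched pair yields a shorter word whose prefix counts of $+$ and $-$ agree with those of the original everywhere (since we deleted adjacent $+$ and $-$ symbols in the $+/-$ subword), so it remains $(+-)$-balanced and $(+-)$-Yamanouchi, and the inductive hypothesis produces a unique matching on the remainder. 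Concatenating with the arc from $p$ to the leftmost $-$ gives the unique crossingless matching compatible with the boundary word.

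The only real content is the crossing argument that establishes the base pair at each stage of the induction, and this is an elementary planarity observation once the endpoints have been labeled via Proposition \ref{prop:bandwordendpoints}. I do not anticipate a serious obstacle here, since all of the geometric work has been done in Corollary \ref{cor:anchored} and Proposition \ref{prop:bandwordendpoints}; this corollary is essentially a bookkeeping consequence packaging those two results with the classical fact that a balanced Yamanouchi word has a unique non-crossing matching.
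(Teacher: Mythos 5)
Your proposal is correct and follows essentially the same route as the paper: the paper also deduces the corollary by combining Corollary \ref{cor:anchored} and Proposition \ref{prop:bandwordendpoints} with the classical fact that a $(+-)$-balanced, $(+-)$-Yamanouchi word admits a unique crossingless matching with left endpoints $+$ and right endpoints $-$. The only difference is that you supply an inductive proof of that classical uniqueness statement, which the paper simply cites as previously observed.
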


The following definition gives a 1-dimensional projection of the band diagram that is useful in subsequent arguments.
 
\begin{definition}
Given a half-plane graph $G^+$, the {\it shadow} of $G^+$ is a nested collection of sets $$S(G^+): S_1(G^+)\supseteq S_2(G^+) \supseteq S_3(G^+) \supseteq \cdots $$ where $S_d(G^+)$ is the set of all points on the real axis of the upper half-plane belonging to a $k$-band for some $k\geq d$. 
\end{definition}

\begin{remark}
Several properties of the shadow of a half-plane graph $G^+$ follow immediately from its definition:
\begin{itemize}
\item If a band diagram is drawn using half circles, then $S_d(G^+)$ is the result of orthogonally projecting the $d$-arcs in the band diagram to the $x$- axis.
\item For all $d$, the shadow $S_d$ is the union of finitely many closed intervals.
\end{itemize}
\end{remark}

Figure \ref{fig:shadowex} shows a half-plane graph on $6$ boundary vertices with face depths labeled and the band diagram highlighted in red. The shadow for this graph is $S: [1,6] \supset [3,4]$.

We conclude with one final definition that will lead to a partial order on half-plane graphs.
\begin{definition}
If $G_1^+$ and $G_2^+$ are two half-plane graphs, we say the shadow of $G_2^+$ contains the shadow of $G_1^+$ if $S_d(G_1^+) \subset S_d(G_2^+)$ for all $d$. We denote this {\it $S(G_1^+)\subset S(G_2^+)$}.
\end{definition}

The next lemma follows immediately from the definitions of shadows and shadow containment. 

\begin{lemma}\label{lem:axisdepth}
Suppose $G_1^{+}$ and $G_2^{+}$ are two half-plane graphs with the same number of boundary vertices.  The shadows $S(G_1^+)\subset S(G_2^+)$ if and only if the depth at every point along the $x$-axis in $G_1^{+}$ is at most the depth at the same point on the $x$-axis in $G_2^{+}$.
\end{lemma}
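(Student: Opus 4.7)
The plan is to verify both implications by unpacking the definition of $S_d(G^+)$ in a pointwise fashion along the $x$-axis. For each point $x$ on the $x$-axis, let $\delta_{G^+}(x)$ denote the maximum depth among faces of $G^+$ whose closure contains $x$. (At a non-boundary point there is a unique such face; at a boundary vertex two faces from above meet $x$, and $\delta$ takes the larger of their depths, which matches the convention that the $k$-band is the \emph{closure} of the union of depth-$k$ faces.) Directly from the definition of $S_d$, a point $x$ lies in $S_d(G^+)$ if and only if $\delta_{G^+}(x) \geq d$.

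Given this equivalence, shadow containment $S_d(G_1^+) \subset S_d(G_2^+)$ for every $d$ is equivalent to the pointwise inequality $\delta_{G_1^+}(x) \leq \delta_{G_2^+}(x)$ for every $x$ on the $x$-axis. The forward direction is seen by fixing $x$ and choosing $d = \delta_{G_1^+}(x)$: then $x \in S_d(G_1^+)$, hence $x \in S_d(G_2^+)$ by hypothesis, which forces $\delta_{G_2^+}(x) \geq d$. The reverse direction is equally immediate: if the pointwise inequality holds and $x \in S_d(G_1^+)$, then $\delta_{G_2^+}(x) \geq \delta_{G_1^+}(x) \geq d$, so $x \in S_d(G_2^+)$. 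Since $d$ is arbitrary, this gives $S_d(G_1^+) \subset S_d(G_2^+)$ for all $d$.

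The argument has essentially no obstacle beyond fixing the correct interpretation of ``depth at a point on the $x$-axis'': it must be read as the maximum depth among faces adjacent to $x$ from above, so that boundary vertices do not cause any mismatch between the set-theoretic shadow and the pointwise depth function. Once this reading is adopted---which is forced by the closure in the definition of the $k$-band---the lemma is a one-line set-theoretic reformulation, and no use of trivalency, band structure, or Corollary \ref{cor:anchored} is needed. The only thing worth flagging in the write-up is that the two half-plane graphs are assumed to share boundary vertex positions, so the pointwise comparison $\delta_{G_1^+}(x) \leq \delta_{G_2^+}(x)$ is over a common domain.
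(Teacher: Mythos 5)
Your proposal is correct and matches the paper's treatment: the paper states this lemma as following immediately from the definitions of shadows and shadow containment, and your pointwise unpacking ($x \in S_d(G^+)$ iff the maximal depth of a face whose closure contains $x$ is at least $d$) is exactly that immediate argument written out, with a reasonable handling of the minor convention issue at boundary vertices.
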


\subsection{Band diagrams for webs}\label{subsec:bandweb}
Every bottom web is a half-plane graph. We can therefore associate a band diagram to each bottom web. We will see that for reduced $\mathfrak{sl}_2$ webs, the band diagram is trivial. For $\mathfrak{sl}_3$ webs, however, the band diagram has some interesting properties.

Recall that an $\mathfrak{sl}_2$ bottom web is a half-plane graph with no internal vertices. Further, each arc in an $\mathfrak{sl}_2$ web separates faces of different depth, so the band diagram for an $\mathfrak{sl}_2$ bottom web is the web itself. Trivially, then, a reduced $\mathfrak{sl}_2$ bottom web is completely determined by its band diagram.  The band diagram for an $\mathfrak{sl}_2$ web is anchored if and only if the web is reduced. 

Unlike $\mathfrak{sl}_2$ webs, webs for $\mathfrak{sl}_3$ can have edges between faces of the same depth, so their band diagrams are more complicated. Figure \ref{fig:sl3band} has an example of a reduced $\mathfrak{sl}_3$ web and its band diagram.

The next proposition shows that a reduced $\mathfrak{sl}_3$ web has an anchored band diagram, as mentioned in the discussion preceding Corollary \ref{cor:anchored}. 

\begin{proposition}\label{prop:sl3anch}
The band diagram for any reduced $\mathfrak{sl}_3$ bottom web is anchored.
\end{proposition}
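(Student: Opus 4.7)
The plan is a proof by contradiction. Suppose $C$ is a closed component of the $d$-arc (for some $d \geq 1$) of a reduced $\mathfrak{sl}_3$ bottom web $W$, and let $D$ denote the bounded disk in the upper half-plane enclosed by $C$. By Proposition \ref{prop:manifold}, $C$ is a simple closed cycle in $W$ through trivalent internal vertices. At each vertex $v$ on $C$, the two $C$-edges incident to $v$ are either both outgoing (if $v$ is a source) or both incoming (if $v$ is a sink), so source and sink vertices must alternate around $C$, and in particular $C$ has even length $n$. Denote by $a$ the number of $C$-vertices whose dangling third edge enters $D$, and by $p$ and $e$ the numbers of vertices and edges of $W$ strictly inside $D$.

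The first main step is an Euler-characteristic computation for the subgraph $G_D$ of $W$ consisting of vertices and edges in $\bar D$. A short preliminary calculation rules out disconnected pieces of $W$ sitting inside $D$: any closed sub-web of a reduced $\mathfrak{sl}_3$ web with no boundary vertices would satisfy $V - E + F = 2$ with $2E = 3V$ and all bounded faces of size $\geq 6$, yielding the contradiction $6 + 2E \leq 2E$. Hence every vertex strictly inside $D$ is connected through some chain of edges in $D$ to $C$, forcing $G_D$ to be connected. The handshake lemma then gives $3p = a + 2e$, and Euler's formula produces $1 + a + e - p$ bounded faces of $G_D$. Each such bounded face is a closed face of $W$ contained in $D$, so by the non-elliptic hypothesis each has boundary length at least six. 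Summing boundary lengths, with each interior or inward edge contributing twice and each edge of $C$ contributing once, gives $n + 2a + 2e \geq 6(1 + a + e - p)$, which simplifies via $3p = a + 2e$ to the key bound $n \geq 6 + 2a$.

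The main obstacle is upgrading this inequality to an actual contradiction, since by itself it permits, for instance, a hexagonal $C$ with no interior structure and all six dangling edges pointing outward. To rule out such configurations, I would select $C$ to be an outermost closed arc component of the band diagram of $W$, so that no closed arc encircles $C$. I would then mirror the interior Euler analysis on the exterior neighborhood of $C$: the faces of $W$ immediately outside $C$ all have depth $d - 1$ and are separated from one another by the $n - a$ outward danglers, and the outermost choice guarantees that the outer boundaries of the depth-$(d-1)$ region must reach the $x$-axis rather than closing up into another band-diagram loop. Combining the exterior face-size constraints coming from the reduced hypothesis, the interior bound $n \geq 6 + 2a$, and the alternating source/sink parity of the dangling edges around $C$ should force either a closed face of size smaller than six adjacent to $C$ or a violation of the Yamanouchi conditions of Proposition \ref{prop:sl3webword} on the boundary word, producing the desired contradiction. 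Making this exterior bookkeeping precise is the essential difficulty in the argument.
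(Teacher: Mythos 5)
Your proposal is not complete, and the gap is exactly where you say it is: the argument stops at the point where a contradiction would have to be produced. The interior Euler-characteristic computation is essentially sound (modulo small bookkeeping issues, e.g.\ $3p=a+2e$ silently excludes ``chords'' with both endpoints on $C$ whose interiors lie in $D$, and the connectivity step should be run on an innermost closed piece), but as you note the resulting bound $n\geq 6+2a$ is consistent with, say, a hexagonal closed component with all six dangling edges pointing outward, so it rules out nothing by itself. The exterior step you sketch does not obviously close this hole: a mirrored Euler/face-size count outside $C$ is not available in the same form, because the faces of depth $d-1$ surrounding $C$ may touch the $x$-axis or be the unbounded face, and such faces are exempt from the six-edge (non-elliptic) condition; and Proposition \ref{prop:sl3webword} constrains only the boundary word, i.e.\ the depths read along the axis, which a closed loop of the band diagram sitting strictly in the interior does not by itself perturb, so it is unclear how a Yamanouchi violation would be forced. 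In short, the obstruction to a closed $d$-arc component is a global property of \emph{reduced} webs that your local counting inside $C$ does not see, and the ``exterior bookkeeping'' you defer is the entire content of the proposition.

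For comparison, the paper's proof avoids this difficulty by invoking the structural description of reduced webs via $M$-diagrams: by \cite{T}, the depth of a face of a reduced web equals the number of $M$-diagram arcs lying above it, so each $d$-arc runs along arcs of the $M$-diagram, modified only in the local pictures of Figure \ref{fig:anchored}; these curves have no vertical tangencies away from the $x$-axis, whereas a closed component would require at least two, so no closed components can occur. If you want to keep an intrinsic approach, you would need to supply a genuinely new argument controlling the depth-$(d-1)$ region outside an outermost closed component; otherwise the cleanest fix is to route the proof through the $M$-diagram depth formula as the paper does.
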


\begin{proof}
The second author showed that the depth of a face of a reduced web is equal to the number of arcs above the corresponding face in the $M$-diagram \cite{T}. Therefore, we can label the local depths around vertices of reduced webs as shown in Figure \ref{fig:anchored} and highlight the  $d$-arc in red.
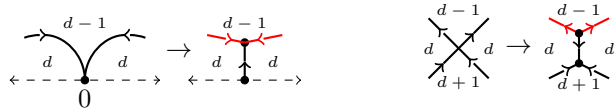
\begin{figure}[ht]
\raisebox{-12pt}{\begin{tikzpicture}[baseline=0cm, scale=.5]
\draw[style=dashed, <->] (-.5,0)--(3.5,0);
\node at (1.5,-.5) {$0$};
\draw[radius=.1, fill=black](1.5,0)circle;
\draw [style=thick, -<](1.5,0) arc (0:90: 1.2cm);
\draw[style=thick](.3,1.2) arc (90:110: 1.2cm);
\draw [style=thick, -<] (1.5,0) arc (180:90: 1.2cm);
\draw[style=thick](2.7,1.2) arc (90:70: 1.2cm);
\node at (1.5,1.5) {\tiny{$d-1$}};
\node at (.5,.5) {\tiny{$d$}};
\node at (2.5,.5){\tiny{$d$}};
\node at (4,.75) {$\rightarrow$};
\draw[style=dashed, <->] (4.25,0)--(7.25,0);
\draw[radius=.1, fill=black](5.75,0)circle;
\draw[radius=.1, fill=black](5.75,1)circle;
\draw[style=thick,->](5.75,0) -- (5.75,.5);
\draw[style=thick](5.75,.5) -- (5.75,1);
\draw[style=thick, -<, red] (5.75,1) -- (5.25,1.1);
\draw[style=thick, red](5.25,1.1) -- (4.75,1.2);
\draw[style=thick, -<, red] (5.75,1) -- (6.25,1.1);
\draw[style=thick, red](6.25,1.1) -- (6.75,1.2);
\node at (5.75,1.5) {\tiny{$d-1$}};
\node at (5,.5) {\tiny{$d$}};
\node at (6.5,.5){\tiny{$d$}};
\end{tikzpicture}}
\hspace{.5in}
\begin{tikzpicture}[baseline=0cm, scale=0.4]
\draw[style=thick, ->] (-1,-1)--(-.5,-.5);
\draw[style=thick, -<] (-1,1)--(-.5,.5);
\draw[style=thick, -<] (-.5,.5)--(.5,-.5);
\draw[style=thick, ->] (-.5,-.5)--(.5,.5);
\draw[style=thick] (.5,.5)--(1,1);
\draw[style=thick] (.5,-.5)--(1,-1);
\node at (0,1.25) {\tiny{$d-1$}};
\node at (0,-1.15) {\tiny{$d+1$}};
\node at (-1,0) {\tiny{$d$}};
\node at (1,0){\tiny{$d$}};
\node at (2,0) {$\rightarrow$};

\draw[style=thick, ->] (4,.5)--(4,0);
\draw[style=thick] (4,0)--(4,-.5);
\draw[style=thick, ->, red] (4,.5)--(3.5,.75);
\draw[style=thick, red] (3.5,.75)--(3,1);
\draw[style=thick, ->, red] (4,.5)--(4.5,.75);
\draw[style=thick, red] (4.5,.75)--(5,1);
\draw[style=thick, -<] (4,-.5)--(3.5,-.75);
\draw[style=thick] (3.5,-.75)--(3,-1);
\draw[style=thick, -<] (4,-.5)--(4.5,-.75);
\draw[style=thick] (4.5,-.75)--(5,-1);
\node at (4,1.25) {\tiny{$d-1$}};
\node at (4,-1.2) {\tiny{$d+1$}};
\node at (3,0) {\tiny{$d$}};
\node at (5,0){\tiny{$d$}};

\draw[ radius=.12, fill=black] (4,-.5) circle;
\draw[ radius=.12, fill=black] (4,.5) circle;

\end{tikzpicture}
\caption{Constructing a web from an oriented $M$-diagram}\label{fig:anchored}
\end{figure} 
Away from these local pictures, the $d$-arc for a web constructed from an $M$-diagram consists of smooth curves that have no points at which the tangent is vertical.  Any closed component of a $d$-arc must have at least two points at which the tangent to the curve is vertical. Hence it is impossible for the band diagram for a reduced $\mathfrak{sl}_3$ web to have closed components.
\end{proof}

\begin{figure}[ht]
\raisebox{-13pt}{\begin{tikzpicture}[baseline=0cm, scale=0.6]
\draw[style=dashed, <->] (2.5,0)--(8.5,0);
\draw[radius=.08, fill=black](3,0)circle;
\draw[radius=.08, fill=black](4,0)circle;
\draw[radius=.08, fill=black](5,0)circle;
\draw[radius=.08, fill=black](6,0)circle;
\draw[radius=.08, fill=black](7,0)circle;
\draw[radius=.08, fill=black](8,0)circle;
\draw[style=thick,->](4, 0) -- (4,.5);
\draw[style=thick](4,.5)--(4,1);
\draw[radius=.08, fill=black](4,1)circle;
\draw[style=thick,->](3,0)--(3.5,.5);
\draw[style=thick](3.5,.5)--(4,1);
\draw[style=thick,->](7, 0) -- (7,.5);
\draw[style=thick](7,.5)--(7,1);
\draw[radius=.08, fill=black](7,1)circle;
\draw[style=thick,->](8,0)--(7.5,.5);
\draw[style=thick](7.5,.5)--(7,1);
\draw[radius=.08, fill=black](5.5,1)circle;
\draw[style=thick,-<](5.5,1)--(5.5,1.5);
\draw[style=thick](5.5,1.5)--(5.5,2);
\draw[radius=.08, fill=black](5.5,2)circle;
\draw[style=thick,->](5,0)--(5.25,.5);
\draw[style=thick](5.25,.5)--(5.5,1);
\draw[style=thick,->](6,0)--(5.75,.5);
\draw[style=thick](5.75,.5)--(5.5,1);
\draw[style=thick,-<](4,1)--(4.75,1.5);
\draw[style=thick](4.75,1.5)--(5.5,2);
\draw[style=thick,-<](7,1)--(6.25,1.5);
\draw[style=thick](6.25,1.5)--(5.5,2);
\node at (3,1) {\tiny{$0$}};
\node at (3.65,.25) {\tiny{$1$}};
\node at (4.75,1) {\tiny{$1$}};
\node at (5.5,.25) {\tiny{$2$}};
\node at (6.25,1) {\tiny{$1$}};
\node at (7.35,.25) {\tiny{$1$}};
\node at (3,-.4) {$+$};
\node at (4,-.4) {0};
\node at (5,-.4) {$+$};
\node at (6,-.4) {$-$};
\node at (7,-.4) {0};
\node at (8,-.4) {$-$};
\end{tikzpicture}}\hspace{-.1in}
\raisebox{-13pt}{\begin{tikzpicture}[baseline=0cm, scale=0.6]
\node at (1.9,.75) {$\longleftrightarrow$};
\draw[style=dashed, <->] (2.5,0)--(8.5,0);
\draw[radius=.08, fill=black](3,0)circle;
\draw[radius=.08, fill=black](4,0)circle;
\draw[radius=.08, fill=black](5,0)circle;
\draw[radius=.08, fill=black](6,0)circle;
\draw[radius=.08, fill=black](7,0)circle;
\draw[radius=.08, fill=black](8,0)circle;
\draw (3,0) arc (180:0: 2.5cm);
\draw (5,0) arc (180:0: .5cm);
\node at (3,-.4) {$+$};
\node at (4,-.4) {0};
\node at (5,-.4) {$+$};
\node at (6,-.4) {$-$};
\node at (7,-.4) {0};
\node at (8,-.4) {$-$};
\end{tikzpicture}}
\caption{A band diagram for an $\mathfrak{sl}_3$ web}\label{fig:sl3band}
\end{figure} 

\begin{remark}\label{rem:sl3undet}
An immediate consequence of Proposition \ref{prop:sl3anch} together with Corollary \ref{cor:unique} is that  the band diagrams for reduced $\mathfrak{sl}_3$ bottom webs are bijective with their standard tableaux. This means that a reduced $\mathfrak{sl}_3$ bottom web is uniquely determined by its band diagram and therefore uniquely determined by its shadow. The algorithm to identify the tableau associated to a reduced web can be modified slightly to produce the tableau instead using the band diagram (or shadow) for a reduced web.
\end{remark}

\begin{remark}
Reduced $\mathfrak{sl}_3$ bottom webs with arbitrary boundary orientation are in bijection with certain {\em semistandard} Young tableaux of rectangular shape \cite{MR3119361}. These webs can be viewed as a kind of {\it contraction} of the reduced $\mathfrak{sl}_3$ webs with $3n$ boundary vertices that are all sources. Moreover, this contraction preserves the property of being anchored. Hence all reduced $\mathfrak{sl}_3$ bottom webs have anchored band diagrams. 
\end{remark}

\begin{remark}\label{rmk:unreduced}
The combinatorics of unreduced $\mathfrak{sl}_3$ webs is considerably more complicated than that of reduced webs. For instance, the boundary word for an unreduced web is guaranteed to be $(+,-)$-balanced and $(+-)$-Yamanouchi but is not necessarily$(+0)$- or $(0-)$-balanced nor is it necessarily $(+0)$- or $(0-)$-Yamanouchi. Figure \ref{fig:unreduced} shows an unreduced web that is neither $(+0)$-balanced nor $(0-)$-balanced (and thus not Yamanouchi in $(+0)$ or $(0-)$).

In addition, the band diagram for an unreduced $\mathfrak{sl}_3$ web need not necessarily be anchored. Figure \ref{fig:webreduc} shows an example of an unreduced web whose band diagram is not anchored as well as the unique reduced web that shares its boundary word (which in this case is the reduction of the unreduced web).

Band diagrams and boundary words uniquely identify reduced $\mathfrak{sl}_3$ bottom webs but not necessarily unreduced $\mathfrak{sl}_3$ bottom webs.  There exist distinct webs with the same boundary word and even distinct webs with the same band diagram. For instance, the webs in Figure \ref{fig:webreduc} have the same boundary word (though {\em not} the same band diagram). The webs in Figure \ref{fig:bandnotunique} have the same band diagram (and hence the same boundary word).

\begin{figure}[h]
\begin{tikzpicture}[scale=.75]
                              \draw[style=dashed, <->] (.5,0)--(6.5,0);
                              \draw[radius=.08, fill=black](1,0)circle;
                              \draw[radius=.08, fill=black](2,0)circle;
                             
                              \draw[radius=.08, fill=black](3,0)circle;
                              \draw[radius=.08, fill=black](4,0)circle;
                              \draw[radius=.08, fill=black](5,0)circle;
                             
                              \draw[radius=.08, fill=black](6,0)circle;
                              \draw[radius=.08, fill=black](2.5,.35)circle;
                              \draw[radius=.08, fill=black](2.5,.75)circle;
                              \draw[radius=.08, fill=black](5.5,.5)circle;
                              \draw[radius=.08, fill=black](4,.75)circle;
                                   \draw[radius=.08, fill=black](2.5,1.5)circle;
                              \draw[radius=.08, fill=black](4,1.5)circle;
                              \node at (1,-.4) {$+$};
\node at (2,-.4) {$+$};
\node at (3,-.4) {$0$};
\node at (4,-.4) {$-$};
\node at (5,-.4) {0};
\node at (6,-.4) {$-$};
                             
                              \begin{scope}[thick,decoration={
                                             markings,
                                             mark=at position 0.5 with {\arrow{>}}}
                              ]
                              \draw[postaction={decorate}, red] (2,0)--(2.5,.35);
                              \draw[postaction={decorate}] (3,0)--(2.5,.35);
                              \draw[postaction={decorate}, red] (2.5,.75)--(2.5,.35);
                               \draw[postaction={decorate}, red] (4,0)--(4,.75);
                                \draw[postaction={decorate}, red] (2.5,.75)--(4,.75);
                                \draw[postaction={decorate}] (2.5,.75)--(2.5,1.5);
                                 \draw[postaction={decorate}, red] (1,0)--(2.5,1.5);
                                  \draw[postaction={decorate}, red] (4,1.5)--(2.5,1.5);
                                  \draw[postaction={decorate}] (4,1.5)--(4,.75);
                                  \draw[postaction={decorate}] (5,0)--(5.5,.5);
                              \draw[postaction={decorate}, red] (6,0)--(5.5,.5);
                              \draw[postaction={decorate}, red] (4,1.5)--(5.5,.5);
                              \end{scope}
                              \end{tikzpicture}
                           \hspace{.5in}
                           \begin{tikzpicture}[scale=.75]
                              \draw[style=dashed, <->] (.5,0)--(6.5,0);
                              \draw[radius=.08, fill=black](1,0)circle;
                              \draw[radius=.08, fill=black](2,0)circle;
                             
                              \draw[radius=.08, fill=black](3,0)circle;
                              \draw[radius=.08, fill=black](4,0)circle;
                              \draw[radius=.08, fill=black](5,0)circle;
                             
                              \draw[radius=.08, fill=black](6,0)circle;
                              \draw[radius=.08, fill=black](3,.75)circle;
                              \draw[radius=.08, fill=black](4,1.5)circle;
                              \node at (1,-.4) {$+$};
\node at (2,-.4) {$+$};
\node at (3,-.4) {$0$};
\node at (4,-.4) {$-$};
\node at (5,-.4) {0};
\node at (6,-.4) {$-$};
                             
                              \begin{scope}[thick,decoration={
                                             markings,
                                             mark=at position 0.5 with {\arrow{>}}}
                              ]
                              \draw[postaction={decorate}, red] (1,0)--(4,1.5);
                              \draw[postaction={decorate}] (3,0)--(3,.75);
                              \draw[postaction={decorate}, red] (2,0)--(3,.75);
                              \draw[postaction={decorate}, red] (4,0)--(3,.75);
                              \draw[postaction={decorate}] (5,0)--(4,1.5);
                              \draw[postaction={decorate}, red] (6,0)--(4,1.5);
                              \end{scope}
                              \end{tikzpicture}   
 \caption{Two $\mathfrak{sl}_3$ bottom webs with the same band diagram.}\label{fig:bandnotunique}                          
\end{figure}

\end{remark}

We conclude this section with one final observation about $\mathfrak{sl}_3$ web shadows that will be useful in upcoming sections. Recall that an arbitrary $\mathfrak{sl}_3$ web can be expressed uniquely as a linear combination of reduced webs. This is accomplished via a sequence of reductions using the rules in Figure \ref{fig:webrelations}. Since all of these relations remove edges, the depths at every point along the boundary in webs after applying a reduction are always less than or equal to those of the original web. Hence, we obtain the following.

\begin{lemma}\label{lem:shadowred}
Let $w$ be an unreduced $\mathfrak{sl}_3$ bottom web with $w = \sum_{i=1}^t c_{\widetilde{w}_i} \widetilde{w}_i$ where $\widetilde{w}_i$ is reduced for all $i$. Then $S(\widetilde{w}_i) \subseteq S(w)$ for all $i$.
\end{lemma}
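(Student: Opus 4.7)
The plan is to induct on the number of local reductions needed to express $w$ as a linear combination of reduced webs. If $w$ is already reduced there is nothing to prove, so for the inductive step it suffices to show the following: whenever a single application of one of the reduction relations in Figure~\ref{fig:webrelations} transforms a web $w'$ into $\sum_i c_i\, w'_i$, each summand satisfies $S(w'_i) \subseteq S(w')$. Chaining these containments through a full reduction sequence for $w$ then yields the lemma.

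By Lemma~\ref{lem:axisdepth}, it is enough to verify that each local reduction weakly decreases the depth at every point along the $x$-axis. I would argue this case by case. For the circle-removal relation the change is strictly interior and does not affect any face touching the $x$-axis, so boundary depths are preserved exactly. For the bigon relation, a face enclosed by two parallel edges is collapsed to a single edge; any minimal dual-graph path from a boundary face to the unbounded face in the reduced web lifts to a path of at most the same length in $w'$, so no boundary depth can increase. The square reduction removes the four interior edges of a square face and reconnects the four external edges in one of two planar ways; a similar dual-path argument shows that in either summand, eliminating the interior edges can only shorten the minimum dual paths realizing the depth of each boundary face.

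The main obstacle is the square relation, since it is the only reduction that produces two distinct terms and genuinely restructures---rather than merely deletes from---the graph. Care will be required to confirm that the two different reconnections of external edges in each summand do not create any new dual-graph shortcut that could raise the depth of a boundary face; the circle and bigon cases are essentially immediate by comparison. Once all three cases are established, combining them with Lemma~\ref{lem:axisdepth} and the inductive hypothesis yields $S(\widetilde{w}_i) \subseteq S(w)$ for every reduced web $\widetilde{w}_i$ in the expansion of $w$.
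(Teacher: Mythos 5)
Your proposal is correct and takes essentially the same route as the paper, which simply observes that every relation in Figure \ref{fig:webrelations} only removes edges, so boundary depths weakly decrease at each reduction step and Lemma \ref{lem:axisdepth} gives the shadow containment inductively. The extra care you anticipate for the square relation is not needed: each of its two summands is obtained from the original web by deleting a pair of opposite sides of the square (and smoothing the resulting bivalent vertices), so every summand is literally a subgraph of the original, and deleting edges can only create dual-graph shortcuts, which lower the depth of a boundary face rather than raise it.
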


\section{A partial order on webs coming from tableaux} \label{sec:taborder}

In this section, we construct a partial order on webs coming from tableaux. In the next section, we introduce another partial order on webs coming from band diagrams and compare them. In the last section, we will apply these partial orders to describe the coefficients in the transition matrix between two bases for the same symmetric group representation: the Specht basis (which is indexed by standard Young tableaux) and the web basis. 

Let $m\in \mathbb{N}$ and let $S_{m}$ be the symmetric group on $m$ letters.  Denote by $s_i$ the simple transposition in $S_{m}$ that interchanges $i$ and $i+1$. 

The symmetric group acts on tableaux by permuting entries. Given a tableau $T$ of shape $\lambda \vdash m$ define $s_i\cdot T$ to be the tableau obtained by swapping entries $i$ and $i+1$ in $T$ and leaving all other entries fixed. Note that this action does not in general preserve the property of being standard (or row- or column-strict).

Using this symmetric group action, we may define a partial order on the set of standard Young tableaux of any fixed shape $\lambda$ as follows. 
\begin{definition}
Given standard Young tableaux  $T$ and $T'$ of shape $\lambda$, we say 
\begin{itemize}
\item $T \stackrel{s_i}{\rightarrow} T'$ if there exists $i$ such that $T' = s_i \cdot T$ and $i$ is in a row above $i+1$ in $T'$ and
\item $T\prec T'$ if there exist $T_1, \ldots, T_k$ such that $T\rightarrow T_1 \rightarrow \cdots \rightarrow T_{k}\rightarrow T'$.
\end{itemize}
We sometimes omit the label $s_i$ on the edges $T \stackrel{s_i}{\rightarrow} T'$ in our notation.
\end{definition}

The arrows $T\rightarrow T'$ form the edges of the Hasse diagram for this partial order. An example is shown in Figure \ref{fig:sl2poset}. (Note that we draw the Hasse diagram with edges directed down the page.) In the case of rectangular partitions, the Hasse diagram for this partial order is connected with unique maximal and minimal elements. The minimal element is the column-filled tableau and the maximal one is the row-filled tableau. Moreover, the Hasse diagram is a subgraph of the Bruhat graph, and therefore this poset on tableaux is ranked.  The rank function can be read directly from the tableau, as in the next definition \cite{MR1429587}.
\begin{definition}
Given a tableau $T\in SYT(\lambda)$, 
\begin{itemize}
\item the {\it column word} for $T$ is the sequence of integers obtained by reading down the columns of $T$ from left to right beginning with the leftmost column, and
\item a {\it descent} in the tableau $T$ is an instance where $i<j$ but $j$ appears before $i$ in the column word.
\end{itemize}
 \end{definition}

In other words, a descent in the tableau is a descent in its column word, according to the traditional definition for descents of permutations in one-line notation.

For instance, in Figure \ref{fig:sl2poset}, the column word for the top tableau is $123456$ and the column word for the bottom tableau is $142536$.  Thus the top tableau has no descents and the bottom tableau has 3 descents.   

Given $n\in \mathbb{N}$ we can use the bijections $\psi^{-1}:SYT(n,n)\rightarrow \mathcal{W}^R_{2n}$ and $\psi^{-1}:SYT(n,n,n)\rightarrow \mathcal{W}^R_{3n}$ together with the partial order on tableaux to endow webs with a partial order.  We denote this partial order $\prec_T$. In other words, given reduced webs $w,w'$ with the same boundary such that $\psi(w)=T$ and $\psi(w')=T'$, we say $w\prec_T w'$ if and only if $T\prec T'$. 

\begin{remark}\label{rmk:sl2Hasse}
In the $\mathfrak{sl}_2$ case, the Hasse diagram for $\prec_T$ has a straightforward description directly in terms of webs. Given reduced webs $w, w'\in \mathcal{W}_{2n}^R$ there exists an edge $w\rightarrow w'$ if and only if there are $i,j$, and $k$ such that $w$ and $w'$ are identical on all vertices except $j<i<i+1<k$ and $w$ pairs $(j,i)$ and $(i+1,k)$.  (By necessity $w'$ must then pair $(j,k)$ and $(i,i+1)$.) This partial order on $\mathfrak{sl}_2$ webs is extensively discussed in \cite{MR3920353}. Figure \ref{fig:sl2poset} shows the example of the full poset for all webs when $n=3$.
\end{remark}

\begin{figure}[ht]
\begin{tikzpicture}[baseline=0cm, scale=0.25]
\node at (0,1) {\small{$\young(123,456)$}};
\node at (0,10) {\small{$\young(124,356)$}};
\node at (-8,16) {\small{$\young(125,346)$}};
\node at (8,16) {\small{$\young(134,256)$}};
\node at (0,21.65) {\small{$\young(135,246)$}};
\draw[style=thick, ->] (0,8)--(0,3);
\draw[style=thick, ->] (8,14)--(2.75,10);
\draw[style=thick, ->] (-8,14)--(-2.75,10);
\draw[style=thick, <-] (8,18)--(2.75,22);
\draw[style=thick, <-] (-8,18)--(-2.75,22);
\node at (1,5) {\small{$s_3$}};
\node at (7,12) {\small{$s_2$}};
\node at (-7,12) {\small{$s_4$}};
\node at (7,20) {\small{$s_4$}};
\node at (-7,20) {\small{$s_2$}};
\end{tikzpicture}
\hspace{.25in}
\raisebox{110pt}{$\; \stackrel{\psi^{-1}}{\longrightarrow} \;$}
\hspace{.25in}
\raisebox{-12pt}{\begin{tikzpicture}[baseline=0cm, scale=0.28]
\draw[style=thick] (-2,1) to[out=90, in=180] (0,3) to[out=0, in=90] (2,1);
\draw[style=thick] (-1.3,1) to[out=90, in=180] (0,2.3) to[out=0, in=90] (1.3,1);
\draw[style=thick] (-.6,1) to[out=90, in=180] (0,1.6) to[out=0, in=90] (.6,1);

\draw[style=thick] (-2,8.25) to[out=90, in=180] (0,10.25) to[out=0, in=90] (2,8.25);
\draw[style=thick] (-1.6,8.25) to[out=90, in=180] (-.9,8.95) to[out=0, in=90] (-.2,8.25);
\draw[style=thick] (.2,8.25) to[out=90, in=180] (.9,8.95) to[out=0, in=90] (1.6,8.25);

\draw[style=thick] (-10,15) to[out=90, in=180] (-8.5, 16.5) to[out=0, in=90] (-7, 15);
\draw[style=thick] (-9, 15) to[out=90, in=180] (-8.5, 15.5) to[out=0, in=90] (-8, 15);
\draw[style=thick] (-6.5, 15) to[out=90, in=180] (-6, 15.5) to[out=0, in=90] (-5.5, 15);

\draw[style=thick] (10,15) to[out=90, in=0] (8.5, 16.5) to[out=180, in=90] (7, 15);
\draw[style=thick] (9, 15) to[out=90, in=0] (8.5, 15.5) to[out=180, in=90] (8, 15);
\draw[style=thick] (6.5, 15) to[out=90, in=0] (6, 15.5) to[out=180, in=90] (5.5, 15);

\draw[style=thick] (-.5, 21.75) to[out=90, in=180] (0, 22.5) to[out=0, in=90] (.5, 21.75);
\draw[style=thick] (-2, 21.75) to[out=90, in=180] (-1.5, 22.5) to[out=0, in=90] (-1, 21.75);
\draw[style=thick] (2, 21.75) to[out=90, in=0] (1.5, 22.5) to[out=180, in=90] (1, 21.75);

\draw[style=thick, ->] (0,7.5)--(0,3.5);
\draw[style=thick, ->] (8,14.5)--(2.75,10);
\draw[style=thick, ->] (-8,14.5)--(-2.75,10);
\draw[style=thick, <-] (8,17)--(2.5,21);
\draw[style=thick, <-] (-8,17)--(-2.5,21);
\node at (1,5.5) {\small{$s_3$}};
\node at (7,12) {\small{$s_2$}};
\node at (-7,12) {\small{$s_4$}};
\node at (7,20) {\small{$s_4$}};
\node at (-7,20) {\small{$s_2$}};
\end{tikzpicture}}
\caption{The poset structure on webs and tableaux corresponding to shape $(3,3)\vdash 6$}\label{fig:sl2poset}
\end{figure}
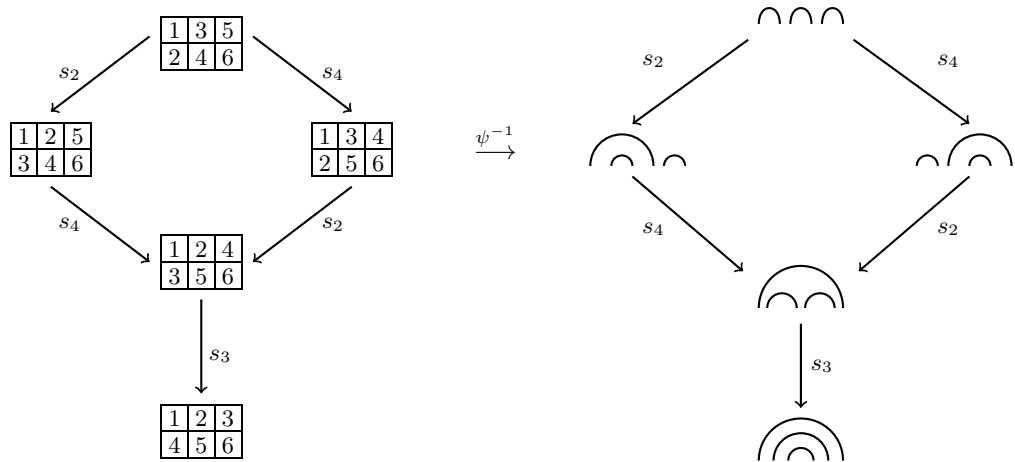

In the $\mathfrak{sl}_3$ case, there are three different combinatorial objects associated to $T\in SYT(n,n,n)$: a web, an $M$-diagram, and a band diagram. The bijection $\psi^{-1}$ and the partial order on tableaux therefore produce partial orders not just on webs but also on the $M$-diagrams and band diagrams associated to webs. In all cases, we denote this partial order by $\prec_T$. This is shown in Figure \ref{fig:sl3poset} in the case when $n=2$.

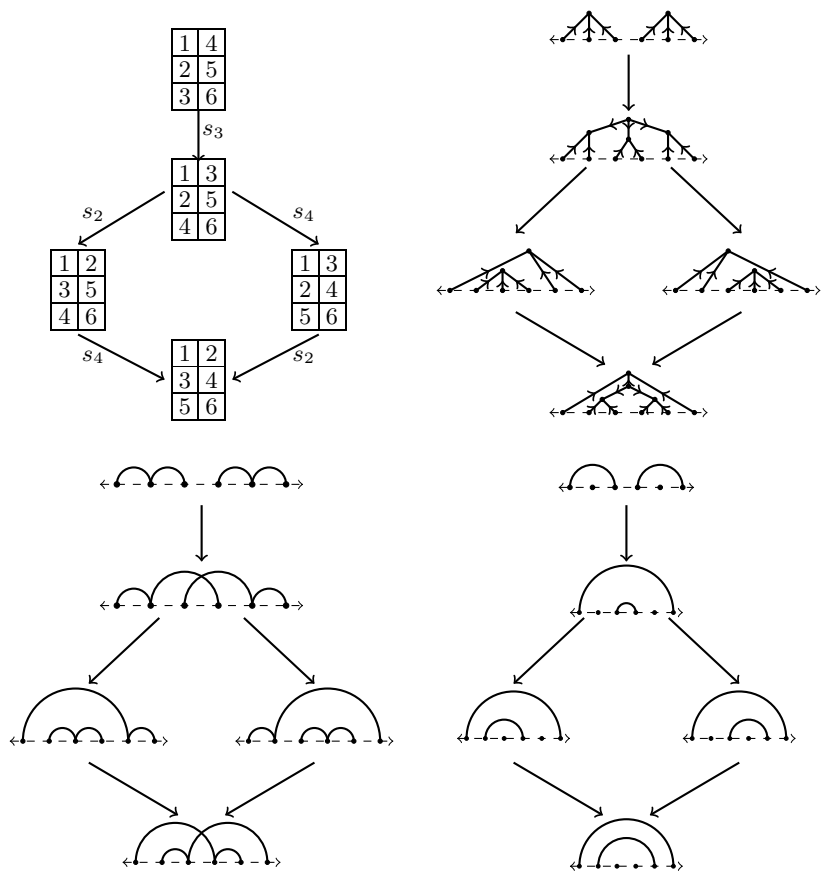
\begin{figure}
\centering
\subfloat{
               \begin{tikzpicture}[baseline=0cm, scale=0.2]
               \node at (0,1) {\small{$\young(12,34,56)$}};
               \node at (8,7) {\small{$\young(13,24,56)$}};
               \node at (-8,7) {\small{$\young(12,35,46)$}};
               \node at (0,13) {\small{$\young(13,25,46)$}};
               \node at (0,21.65) {\small{$\young(14,25,36)$}};
               \draw[style=thick, ->] (0,19)--(0,15.5);
               \draw[style=thick, ->] (8,4)--(2.25,1);
               \draw[style=thick, ->] (-8,4)--(-2.25,1);
               \draw[style=thick, <-] (8,10)--(2.25,13.5);
               \draw[style=thick, <-] (-8,10)--(-2.25,13.5);
               \node at (1,17.5) {\small{$s_3$}};
               \node at (7,12) {\small{$s_4$}};
               \node at (-7,12) {\small{$s_2$}};
               \node at (7,2.5) {\small{$s_2$}};
               \node at (-7,2.5) {\small{$s_4$}};
               \end{tikzpicture}
}\hspace{.5cm}
\subfloat{
              \raisebox{-10pt}{ \begin{tikzpicture}[baseline=0cm, scale=0.25]
               \node at (0,21) {\begin{tikzpicture}[scale=.35]
                              \draw[style=dashed, <->] (.5,0)--(6.5,0);
                              \draw[radius=.08, fill=black](1,0)circle;
                              \draw[radius=.08, fill=black](2,0)circle;
                             
                              \draw[radius=.08, fill=black](3,0)circle;
                              \draw[radius=.08, fill=black](4,0)circle;
                              \draw[radius=.08, fill=black](5,0)circle;
                             
                              \draw[radius=.08, fill=black](6,0)circle;
                              \draw[radius=.08, fill=black](2,1)circle;
                              \draw[radius=.08, fill=black](5,1)circle;
                             
                              \begin{scope}[thick,decoration={
                                             markings,
                                             mark=at position 0.5 with {\arrow{>}}}
                              ]
                              \draw[postaction={decorate}] (1,0)--(2,1);
                              \draw[postaction={decorate}] (3,0)--(2,1);
                              \draw[postaction={decorate}] (2,0)--(2,1);
                              \draw[postaction={decorate}] (4,0)--(5,1);
                              \draw[postaction={decorate}] (5,0)--(5,1);
                              \draw[postaction={decorate}] (6,0)--(5,1);
                              \end{scope}
                              \end{tikzpicture}};
               \node at (0,15) {\begin{tikzpicture}[scale=.35]
                              \draw[style=dashed, <->] (.5,0)--(6.5,0);
                              \draw[radius=.08, fill=black](1,0)circle;
                              \draw[radius=.08, fill=black](2,0)circle;
                             
                              \draw[radius=.08, fill=black](3,0)circle;
                              \draw[radius=.08, fill=black](4,0)circle;
                              \draw[radius=.08, fill=black](5,0)circle;
                             
                              \draw[radius=.08, fill=black](6,0)circle;
                              \draw[radius=.08, fill=black](2,1)circle;
                              \draw[radius=.08, fill=black](5,1)circle;
                             
                              \draw[radius=.08, fill=black](3.5,.75)circle;
                              \draw[radius=.08, fill=black](3.5,1.5)circle;
                             
                              \begin{scope}[thick,decoration={
                                             markings,
                                             mark=at position 0.5 with {\arrow{>}}}
                              ]
                              \draw[postaction={decorate}] (1,0)--(2,1);
                              \draw[postaction={decorate}] (3,0)--(3.5,.75);
                              \draw[postaction={decorate}] (2,0)--(2,1);
                              \draw[postaction={decorate}] (4,0)--(3.5,.75);
                              \draw[postaction={decorate}] (5,0)--(5,1);
                              \draw[postaction={decorate}] (6,0)--(5,1);
                              \draw[postaction={decorate}] (3.5,1.5)--(3.5,.75);
                              \draw[postaction={decorate}] (3.5,1.5)--(2,1);
                              \draw[postaction={decorate}] (3.5,1.5)--(5,1);
                              \end{scope}
                              \end{tikzpicture}};
               \node at (-6,8) {\begin{tikzpicture}[scale=.35]
                              \draw[style=dashed, <->] (.5,0)--(6.5,0);
                              \draw[radius=.08, fill=black](1,0)circle;
                              \draw[radius=.08, fill=black](2,0)circle;
                             
                              \draw[radius=.08, fill=black](3,0)circle;
                              \draw[radius=.08, fill=black](4,0)circle;
                              \draw[radius=.08, fill=black](5,0)circle;
                             
                              \draw[radius=.08, fill=black](6,0)circle;
                              \draw[radius=.08, fill=black](3,.75)circle;
                              \draw[radius=.08, fill=black](4,1.5)circle;
                             
                              \begin{scope}[thick,decoration={
                                             markings,
                                             mark=at position 0.5 with {\arrow{>}}}
                              ]
                              \draw[postaction={decorate}] (1,0)--(4,1.5);
                              \draw[postaction={decorate}] (3,0)--(3,.75);
                              \draw[postaction={decorate}] (2,0)--(3,.75);
                              \draw[postaction={decorate}] (4,0)--(3,.75);
                              \draw[postaction={decorate}] (5,0)--(4,1.5);
                              \draw[postaction={decorate}] (6,0)--(4,1.5);
                              \end{scope}
                              \end{tikzpicture}};
              
               \node at (6,8) {\begin{tikzpicture}[scale=.35]
                              \draw[style=dashed, <->] (.5,0)--(6.5,0);
                              \draw[radius=.08, fill=black](1,0)circle;
                              \draw[radius=.08, fill=black](2,0)circle;
                             
                              \draw[radius=.08, fill=black](3,0)circle;
                              \draw[radius=.08, fill=black](4,0)circle;
                              \draw[radius=.08, fill=black](5,0)circle;
                             
                              \draw[radius=.08, fill=black](6,0)circle;
                              \draw[radius=.08, fill=black](4,.75)circle;
                              \draw[radius=.08, fill=black](3,1.5)circle;
                             
                              \begin{scope}[thick,decoration={
                                             markings,
                                             mark=at position 0.5 with {\arrow{>}}}
                              ]
                              \draw[postaction={decorate}] (1,0)--(3,1.5);
                              \draw[postaction={decorate}] (3,0)--(4,.75);
                              \draw[postaction={decorate}] (2,0)--(3,1.5);
                              \draw[postaction={decorate}] (4,0)--(4,.75);
                              \draw[postaction={decorate}] (5,0)--(4,.75);
                              \draw[postaction={decorate}] (6,0)--(3,1.5);
                              \end{scope}
                              \end{tikzpicture}};
               \node at (0,1.5) {\begin{tikzpicture}[scale=.35]
                              \draw[style=dashed, <->] (.5,0)--(6.5,0);
                              \draw[radius=.08, fill=black](1,0)circle;
                              \draw[radius=.08, fill=black](2,0)circle;
                             
                              \draw[radius=.08, fill=black](3,0)circle;
                              \draw[radius=.08, fill=black](4,0)circle;
                              \draw[radius=.08, fill=black](5,0)circle;
                             
                              \draw[radius=.08, fill=black](6,0)circle;
                              \draw[radius=.08, fill=black](2.5,.5)circle;
                              \draw[radius=.08, fill=black](4.5,.5)circle;

                              \draw[radius=.08, fill=black](3.5,1)circle;
                              \draw[radius=.08, fill=black](3.5,1.5)circle;
                             
                              \begin{scope}[thick,decoration={
                                             markings,
                                             mark=at position 0.5 with {\arrow{>}}}
                              ]
                              \draw[postaction={decorate}] (1,0)--(3.5,1.5);
                              \draw[postaction={decorate}] (3,0)--(2.5,.5);
                              \draw[postaction={decorate}] (2,0)--(2.5,.5);
                              \draw[postaction={decorate}] (4,0)--(4.5,.5);
                              \draw[postaction={decorate}] (5,0)--(4.5,.5);
                              \draw[postaction={decorate}] (6,0)--(3.5,1.5);
                              \draw[postaction={decorate}] (3.5,1)--(2.5,.5);
                              \draw[postaction={decorate}] (3.5,1)--(4.5,.5);
                              \draw[postaction={decorate}] (3.5,1)--(3.5,1.5);
                              \end{scope}
                              \end{tikzpicture}};
              
               \draw[style=thick, ->] (0,19.5)--(0,16.5);
               \draw[style=thick, ->] (6,5.8)--(1.25,3);
               \draw[style=thick, ->] (-6,5.8)--(-1.25,3);
               \draw[style=thick, <-] (6,10)--(2.25,13.5);
               \draw[style=thick, <-] (-6,10)--(-2.25,13.5);
               \end{tikzpicture}}
}
 
\subfloat{
               \begin{tikzpicture}[baseline=0cm, scale=0.25]
               \node at (0,21) {\begin{tikzpicture}[scale=.45]
                              \draw[style=dashed, <->] (.5,0)--(6.5,0);
                              \draw[radius=.08, fill=black](1,0)circle;
                              \draw[radius=.08, fill=black](2,0)circle;
                              \draw[radius=.08, fill=black](3,0)circle;
                              \draw[radius=.08, fill=black](4,0)circle;
                              \draw[radius=.08, fill=black](5,0)circle;
                              \draw[radius=.08, fill=black](6,0)circle;
                              \draw[style=thick] (2,0) arc (0:180: .5cm);
                              \draw[style=thick] (3,0) arc (0:180: .5cm);
                              \draw[style=thick] (5,0) arc (0:180: .5cm);
                              \draw[style=thick] (6,0) arc (0:180: .5cm);
                              \end{tikzpicture}};
               \node at (0,15) {\begin{tikzpicture}[scale=.45]
                              \draw[style=dashed, <->] (.5,0)--(6.5,0);
                              \draw[radius=.08, fill=black](1,0)circle;
                              \draw[radius=.08, fill=black](2,0)circle;
                              \draw[radius=.08, fill=black](3,0)circle;
                              \draw[radius=.08, fill=black](4,0)circle;
                              \draw[radius=.08, fill=black](5,0)circle;
                              \draw[radius=.08, fill=black](6,0)circle;
                              \draw[style=thick] (2,0) arc (0:180: .5cm);
                              \draw[style=thick] (4,0) arc (0:180: 1cm);
                              \draw[style=thick] (5,0) arc (0:180: 1cm);
                              \draw[style=thick] (6,0) arc (0:180: .5cm);
                              \end{tikzpicture}};
               \node at (-6,8.3) {\begin{tikzpicture}[scale=.35]
                              \draw[style=dashed, <->] (.5,0)--(6.5,0);
                              \draw[radius=.08, fill=black](1,0)circle;
                              \draw[radius=.08, fill=black](2,0)circle;
                              \draw[radius=.08, fill=black](3,0)circle;
                              \draw[radius=.08, fill=black](4,0)circle;
                              \draw[radius=.08, fill=black](5,0)circle;
                              \draw[radius=.08, fill=black](6,0)circle;
                              \draw[style=thick] (5,0) arc (0:180: 2cm);
                              \draw[style=thick] (3,0) arc (0:180: .5cm);
                              \draw[style=thick] (4,0) arc (0:180: .5cm);
                              \draw[style=thick] (6,0) arc (0:180: .5cm);
                              \end{tikzpicture}};
              
               \node at (6,8.3) {\begin{tikzpicture}[scale=.35]
                              \draw[style=dashed, <->] (.5,0)--(6.5,0);
                              \draw[radius=.08, fill=black](1,0)circle;
                              \draw[radius=.08, fill=black](2,0)circle;
                              \draw[radius=.08, fill=black](3,0)circle;
                              \draw[radius=.08, fill=black](4,0)circle;
                              \draw[radius=.08, fill=black](5,0)circle;
                              \draw[radius=.08, fill=black](6,0)circle;
                              \draw[style=thick] (2,0) arc (0:180: .5cm);
                              \draw[style=thick] (6,0) arc (0:180: 2cm);
                              \draw[style=thick] (5,0) arc (0:180: .5cm);
                              \draw[style=thick] (4,0) arc (0:180: .5cm);
                              \end{tikzpicture}};
               \node at (0,1.5) {\begin{tikzpicture}[scale=.35]
                              \draw[style=dashed, <->] (.5,0)--(6.5,0);
                              \draw[radius=.08, fill=black](1,0)circle;
                              \draw[radius=.08, fill=black](2,0)circle;
                              \draw[radius=.08, fill=black](3,0)circle;
                              \draw[radius=.08, fill=black](4,0)circle;
                              \draw[radius=.08, fill=black](5,0)circle;
                              \draw[radius=.08, fill=black](6,0)circle;
                              \draw[style=thick] (4,0) arc (0:180: 1.5cm);
                              \draw[style=thick] (6,0) arc (0:180: 1.5cm);
                              \draw[style=thick] (5,0) arc (0:180: .5cm);
                              \draw[style=thick] (3,0) arc (0:180: .5cm);
                              \end{tikzpicture}};
              
               \draw[style=thick, ->] (0,19.5)--(0,16.5);
               \draw[style=thick, ->] (6,5.8)--(1.25,3);
               \draw[style=thick, ->] (-6,5.8)--(-1.25,3);
               \draw[style=thick, <-] (6,10)--(2.25,13.5);
               \draw[style=thick, <-] (-6,10)--(-2.25,13.5);
\end{tikzpicture}
}\hspace{.25cm}
\subfloat{
               \begin{tikzpicture}[baseline=0cm, scale=0.25]
               \node at (0,21) {\begin{tikzpicture}[scale=.3]
                              \draw[style=dashed, <->] (.5,0)--(6.5,0);
                              \draw[radius=.1, fill=black](1,0)circle;
                              \draw[radius=.1, fill=black](2,0)circle;
                              \draw[radius=.1, fill=black](3,0)circle;
                              \draw[radius=.1, fill=black](4,0)circle;
                              \draw[radius=.1, fill=black](5,0)circle;
                              \draw[radius=.1, fill=black](6,0)circle;
                              \draw[style=thick] (3,0) arc (0:180: 1cm);
                              \draw[style=thick] (6,0) arc (0:180: 1cm);
                              \end{tikzpicture}};
               \node at (0,15) {\begin{tikzpicture}[scale=.25]
                              \draw[style=dashed, <->] (.5,0)--(6.5,0);
                              \draw[radius=.1, fill=black](1,0)circle;
                              \draw[radius=.1, fill=black](2,0)circle;
                              \draw[radius=.1, fill=black](3,0)circle;
                              \draw[radius=.1, fill=black](4,0)circle;
                              \draw[radius=.1, fill=black](5,0)circle;
                              \draw[radius=.1, fill=black](6,0)circle;
                              \draw[style=thick] (4,0) arc (0:180: .5cm);
                              \draw[style=thick] (6,0) arc (0:180: 2.5cm);
                              \end{tikzpicture}};
               \node at (-6,8.3) {\begin{tikzpicture}[scale=.25]
                              \draw[style=dashed, <->] (.5,0)--(6.5,0);
                              \draw[radius=.1, fill=black](1,0)circle;
                              \draw[radius=.1, fill=black](2,0)circle;
                              \draw[radius=.1, fill=black](3,0)circle;
                              \draw[radius=.1, fill=black](4,0)circle;
                              \draw[radius=.1, fill=black](5,0)circle;
                              \draw[radius=.1, fill=black](6,0)circle;
                              \draw[style=thick] (4,0) arc (0:180: 1cm);
                              \draw[style=thick] (6,0) arc (0:180: 2.5cm);
                              \end{tikzpicture}};
              
               \node at (6,8.3) {\begin{tikzpicture}[scale=.25]
                              \draw[style=dashed, <->] (.5,0)--(6.5,0);
                              \draw[radius=.1, fill=black](1,0)circle;
                              \draw[radius=.1, fill=black](2,0)circle;
                              \draw[radius=.1, fill=black](3,0)circle;
                              \draw[radius=.1, fill=black](4,0)circle;
                              \draw[radius=.1, fill=black](5,0)circle;
                              \draw[radius=.1, fill=black](6,0)circle;
                              \draw[style=thick] (5,0) arc (0:180: 1cm);
                              \draw[style=thick] (6,0) arc (0:180: 2.5cm);
                              \end{tikzpicture}};
               \node at (0,1.5) {\begin{tikzpicture}[scale=.25]
                              \draw[style=dashed, <->] (.5,0)--(6.5,0);
                              \draw[radius=.1, fill=black](1,0)circle;
                              \draw[radius=.1, fill=black](2,0)circle;
                              \draw[radius=.1, fill=black](3,0)circle;
                              \draw[radius=.1, fill=black](4,0)circle;
                              \draw[radius=.1, fill=black](5,0)circle;
                              \draw[radius=.1, fill=black](6,0)circle;
                              \draw[style=thick] (5,0) arc (0:180: 1.5cm);
                              \draw[style=thick] (6,0) arc (0:180: 2.5cm);
                              \end{tikzpicture}};
              
               \draw[style=thick, ->] (0,19.5)--(0,16.5);
               \draw[style=thick, ->] (6,5.8)--(1.25,3);
               \draw[style=thick, ->] (-6,5.8)--(-1.25,3);
               \draw[style=thick, <-] (6,10)--(2.25,13.5);
               \draw[style=thick, <-] (-6,10)--(-2.25,13.5);
\end{tikzpicture}} \hspace{1cm}
\caption{The poset structure on webs, tableaux, $M$-diagrams, and band diagrams in the $(2,2,2)\vdash 6$ case}\label{fig:sl3poset}
\end{figure}

Since the tableaux poset is ranked, the partial order $\prec_T$ is also a ranked partial order. The rank function for $\prec_T$ can be described explicitly in terms of webs, $M$-diagrams, and band diagrams; we denote each by $r$. The following definition generalizes the notion of nesting number for $\mathfrak{sl}_2$ webs \cite{MR3920353}.

\begin{definition}
Let $D$ be an $\mathfrak{sl}_2$ web, an $M$-diagram for an $\mathfrak{sl}_3$ web, or a band diagram. In each case, $D$ has an associated set of arcs denoted $a\in D$. 
\begin{itemize}
\item For each arc $a\in D$, the {\it nesting number $n(a)$ of $a$} is the number of arcs in $D$ above $a$ that do not intersect $a$.
\item The {\it nesting number $n(D)$ for $D$} is the sum of the nesting numbers of the arcs in $D$. i.e. $n(D) = \sum_{a\in D} n(a)$. 
\end{itemize}
\end{definition}

\begin{definition}
Let $B$ be a band diagram for a half-plane graph with $m$ boundary vertices.
\begin{itemize}
\item For $1\leq i\leq m$, we define the {\it dot depth $d(i)$ of $i$ in $B$} to be $0$ if $i$ is the boundary of an arc in $B$ and the number of arcs of $B$ above $i$ otherwise.
\item The {\it dot depth $d(B)$ of $B$} is the sum $d(B) = \sum_{i=1}^{m} d(i)$. 
\end{itemize}
\end{definition}

We use the term ``dots" because we are counting a statistic of the unpaired vertices along the $x$-axis.

\begin{definition}
Given an $M$-diagram $M$ for an $\mathfrak{sl}_3$ web, we define the {\it crossing number of $M$} to be the number of intersections of arcs in the interior of the diagram. 
\end{definition}

In particular, this definition is written so that a boundary point labeled $0$ in the boundary word for an $M$-diagram does not contribute to the crossing number.

\begin{definition}
Let $w$ be an $\mathfrak{sl}_3$ bottom web with faces denoted $F\in w$.
\begin{itemize}
\item We denote the depth of $F$ in $w$ by $d(F)$.
\item The face $F$ is {\it excluded} if it is the unique highest depth face around an internal vertex of $w$. Denote by $E(w)$ the set of excluded faces.
\end{itemize}
\end{definition}

In other words, the face $F$ is excluded only for interior vertices of the form shown in the right-hand example of Figure \ref{localdepths}. Note that not all internal vertices of $w$ are adjacent to an excluded face: indeed, of all the webs shown in Figure \ref{fig:sl3poset}, only one internal vertex is adjacent to an excluded face (found in the second-highest web in the poset).

In the next two lemmas, these quantities can be used to compute the rank function $r$ explicitly for $\prec_T$ in the varying contexts of webs, band diagrams, and $M$-diagrams. The following lemma states that the rank function for $\mathfrak{sl}_2$ webs is simply the nesting number.  This is proven in \cite[Corollary 4.4]{MR3920353}, so we omit the proof. 

\begin{lemma}\label{lem:sl2rank}
If $w\in \mathcal{W}_{2n}^R$ then $r(w) = n(w)$. In other words, the rank of $w$ is its nesting number.
\end{lemma}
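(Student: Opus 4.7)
The plan is induction on the rank $r(w)$, using the explicit description of covering relations for $\prec_T$ from Remark~\ref{rmk:sl2Hasse}. For the base case, the unique rank-zero web is $\psi^{-1}$ of the column-filled tableau, whose boundary word is $+-+-\cdots+-$, producing a web with $n$ side-by-side arcs $(1,2),(3,4),\dots,(2n-1,2n)$. No arc contains another, so $n(w)=0=r(w)$.

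For the inductive step, suppose $w\to w'$ is a covering relation. By Remark~\ref{rmk:sl2Hasse}, the webs $w$ and $w'$ agree outside four boundary points $j<i<i+1<k$, with $w$ pairing $(j,i)$ and $(i+1,k)$ while $w'$ pairs $(j,k)$ and $(i,i+1)$. I would show $n(w')=n(w)+1$ by comparing nesting contributions arc-by-arc. The key observation, forced by the crossingless property, is that any arc $(p,q)$ lying above $(j,i)$ in $w$ must in fact satisfy $q>k$ (otherwise it would cross $(i+1,k)$); consequently the set of arcs above $(j,i)$ in $w$, the set above $(i+1,k)$ in $w$, and the set above $(j,k)$ in $w'$ all coincide. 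Call the common cardinality $N$. Arcs nested inside $(j,i)$ or $(i+1,k)$ in $w$ are nested inside $(j,k)$ in $w'$ with the same count of arcs above them, and arcs disjoint from the swap region are unaffected. Thus only the swapped arcs contribute to the change: in $w$, $n(j,i)+n(i+1,k)=2N$, while in $w'$, $n(j,k)+n(i,i+1)=N+(N+1)=2N+1$ since $(i,i+1)$ is nested directly inside $(j,k)$. Hence $n(w')=n(w)+1$.

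Because every web in $\mathcal{W}_{2n}^R$ can be reached from the rank-zero web by a chain of covering relations, and each step increases both $r$ and $n$ by exactly one, the equality $r(w)=n(w)$ holds throughout. The main obstacle is less a genuine difficulty than a careful case analysis to confirm that arcs away from the swap region contribute identically to $n(w)$ and $n(w')$; the non-crossing condition rules out all the ``mixed'' configurations (an arc straddling the region between $(j,i)$ and $(i+1,k)$ but not enclosing both), which is exactly what makes the bookkeeping work out cleanly.
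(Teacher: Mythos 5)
Your argument is correct. Note that for this lemma the paper supplies no internal proof at all: it simply cites \cite[Corollary 4.4]{MR3920353}, so there is nothing in the text to compare against, and your write-up amounts to the natural self-contained argument. The base case is right (the minimal element of $\prec_T$ is the column-filled tableau, whose web has arcs $(1,2),(3,4),\dots,(2n-1,2n)$ and hence $n(w)=0=r(w)$), and your covering-step computation is sound: the two facts that carry the bookkeeping are exactly the ones you isolate, namely that non-crossing forces every arc lying over $(j,i)$ in $w$ to have right endpoint beyond $k$, so the arcs over $(j,i)$, over $(i+1,k)$, and over $(j,k)$ are one and the same set of $N$ arcs, and that non-crossing also excludes any arc nested inside $(j,k)$ that straddles $(i,i+1)$ without enclosing both original arcs, so the arcs over $(i,i+1)$ in $w'$ are precisely $(j,k)$ together with those $N$ arcs. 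This gives $n(w')-n(w)=(N+(N+1))-2N=1$, while arcs away from the swap region and arcs nested inside it keep their nesting numbers. Combined with the facts already recorded in Section \ref{sec:taborder} (the tableau poset is ranked with the column-filled tableau as unique minimal element, and Hasse edges are as described in Remark \ref{rmk:sl2Hasse}), induction along a saturated chain from the minimum yields $r(w)=n(w)$ for every $w\in\mathcal{W}_{2n}^R$. This is presumably the same mechanism as in the cited corollary; in any case your proof is complete as written.
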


The next lemma characterizes the rank function for band diagrams, $M$-diagrams, and webs for $\mathfrak{sl}_3$ in terms of the nesting number, dot depth, and excluded faces defined above.

\begin{lemma}\label{lem:sl3rank}
Let $w\in \mathcal{W}_{3n}^R$ and let $B$ and $M$ be the band and $M$-diagrams associated to $w$. By definition $r(w) = r(B) = r(M)$. Further, the following are all rank functions for $\prec_T$.
\begin{itemize}
\item On the level of $M$-diagrams, $r(M) = n(M) + c(M)$. In other words, rank is the sum of the nesting number and crossing number of an $M$-diagram.
\item On the level of band diagrams, $r(B) = n(B) + d(B) -n$. In other words, rank is the sum of the nesting number and dot depth of a band diagram minus the number of arcs.
\item Say $w$ has $s(w)$ internal source vertices. Then we have the following rank function on the level of webs. 
$$r(w) = s(w)+\sum_{\substack{F\in w\\ F\notin E(w)}} d(F)$$
In other words, rank is the sum of the depths of the non-excluded faces plus the number of internal source vertices.
\end{itemize}
\end{lemma}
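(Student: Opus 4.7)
The strategy is to show that each of the three proposed expressions is a rank function for the ranked poset $\prec_T$ by verifying two things: that it vanishes on the unique minimum of $\prec_T$, and that it increases by exactly one along each covering relation $T\rightarrow T'$. This parallels the approach in the $\mathfrak{sl}_2$ setting used for \cite[Corollary~4.4]{MR3920353}.

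For the base case, the minimum of $\prec_T$ is the column-filled tableau whose boundary word is $(+0-)^n$. Its $M$-diagram consists of $n$ disjoint pairs of arcs meeting at each $0$, with no crossings or nested arcs, so $n(M)+c(M)=0$. Its band diagram consists of $n$ disjoint short $(+,-)$-arcs, each covering a single $0$, giving $n(B)=0$ and $d(B)=n$, hence $n(B)+d(B)-n=0$. Its reduced web is a disjoint union of $n$ trivalent ``stars,'' each with three boundary sources joined to a single internal sink; there are no internal source vertices and no bounded faces, so $s(w)+\sum_{F\notin E(w)}d(F)=0$.

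For the inductive step, a cover $T\rightarrow T'$ corresponds to a boundary-word swap at consecutive positions of one of three types: $(0,+)\to(+,0)$, $(-,+)\to(+,-)$, or $(-,0)\to(0,-)$. For $M$-diagrams the two matchings are determined from the boundary word by balanced parentheses, so in each case one can compute explicitly how the swap perturbs the arcs and verify that either one new nested pair or one new crossing is created, so $n(M)+c(M)$ increases by exactly one. For band diagrams, which record the $(+,-)$-matching together with the vertical positions of each $0$, the swap either leaves the matching unchanged and raises the dot depth of a $0$ by one, or alters the matching, increasing the nesting number by one while leaving the total dot depth unaffected; in all cases $n(B)+d(B)$ gains exactly one, and the constant $-n$ does not change. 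For reduced webs, I would use the algorithm of Section~\ref{sub:tabweb} to transport the local $M$-diagram change into a local web change, then recompute $s(w)$ and the face depths, using the fact from \cite{T} that the depth of a face equals the number of $M$-arcs above it.

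The principal obstacle is the web-level formula, because a reduced web is the object furthest removed from the local structure of the boundary word and can be rearranged nontrivially by a single cover, including the introduction or removal of pairs of internal vertices. A cleaner path is to prove once and for all the identity
\begin{equation*}
s(w)+\sum_{F\notin E(w)} d(F) \;=\; n(M)+c(M)
\end{equation*}
directly for any reduced web $w$ with associated $M$-diagram $M$: each nested arc contributes a unit of depth to a non-excluded face, each crossing contributes a pair of internal vertices (one source, one sink) together with a bounded face between them, and the exclusion rule is precisely what cancels the double counting of depth at internal source vertices. Combined with the $M$-diagram verification above, this identity both settles the web case and re-derives the equality $r(w)=r(M)$. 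An analogous direct identification between the $M$-diagram and band diagram statistics would similarly handle the remaining equality.
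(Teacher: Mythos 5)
Your overall strategy (check the statistics along covering edges of $\prec_T$, and for the web formula compare directly with the $M$-diagram statistic) is essentially the paper's route: the paper disposes of the $M$-diagram and band-diagram formulas by ``straightforward inductive arguments'' and obtains the web formula from the $M$-diagram formula via two structural observations (crossings of $M$ correspond to internal source vertices of $w$, and arcs of $M$ correspond to the non-excluded bounded faces of $w$). The band-diagram part of your sketch is fine, and the $M$-diagram part is plausible, although your description of the mechanism is inaccurate: under a cover the constituent $(+0)$- and $(0-)$-matchings can rematch (an adjacent $0+\mapsto +0$ swap behaves like an $\mathfrak{sl}_2$ cover inside one matching), so a single cover can, for example, raise the nesting number by two while destroying a crossing; only the net increase by one holds, and that is what actually has to be verified.

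The genuine gap is in the web-level argument. Your base-case claim that the minimal web (the disjoint union of $n$ tripods) has ``no bounded faces'' is false: each tripod encloses two faces against the $x$-axis, and each of these has depth $1$ (exactly such faces are labelled with depth $1$ in Figures \ref{fig:sl3band} and \ref{fig:sl3poset}); moreover none of them is excluded, since around a tripod's internal vertex the three incident faces have depths $1,1,0$ and there is no unique maximum. So computed honestly your base case gives $s(w_0)+\sum_{F\notin E(w_0)}d(F)=2n$, not $0$, and the same issue undermines your proposed ``cleaner path'' identity $s(w)+\sum_{F\notin E(w)}d(F)=n(M)+c(M)$: the face associated to an $M$-arc has depth at least one (the arc itself lies above it), so each of the $2n$ arcs already contributes a unit the right-hand side does not see, and the heuristic that ``the exclusion rule cancels the double counting'' cannot repair this, because at the minimal web nothing is excluded and the two sides already disagree. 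To make the web statement work you must do the actual bookkeeping the paper gestures at -- the bijection between $M$-arcs and non-excluded bounded faces, the identification of excluded faces with the faces under crossings (Figure \ref{fig:anchored}), and the depth-equals-arcs-above fact from \cite{T} -- and track precisely how the per-arc contributions match up (including any additive normalization), rather than asserting that the totals coincide.
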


\begin{proof}
The formulas for the rank function on $M$-diagrams and band diagrams follow from straightforward inductive arguments. The formula on webs comes from the formula on $M$-diagrams, together with the observations that:
\begin{itemize}
    \item crossings in $M$-diagrams correspond to trivalent source vertices in the associated web, and 
    \item arcs in the $M$-diagram are in bijection with bounded faces of $w$ that are not excluded. 
\end{itemize}
 To see this, note that all but one face directly below an arc of an $M$-diagram is also directly below another arc. Any face directly below two arcs at a crossing is excluded as can be seen in Figure \ref{fig:anchored}. 
\end{proof}
\begin{remark}
The rank functions on band diagrams and webs is well-defined for unreduced webs though these two functions do not produce the same value in this more general setting. For instance, the web in Figure \ref{fig:unreduced} has $r(B) = 5$ and $r(w) = 16$ . The rank function on band diagrams can be generalized even further to any half-plane graph though it is not clear whether the term $-n$ should continue to count the number of arcs in that case.
\end{remark}

\section{A partial order on webs coming from band diagrams}\label{sec:band}
Recall that if $G_1^+$ and $G_2^+$ are two half-plane graphs then we say $S(G_1^+)\subset S(G_2^+)$ if $S_d(G_1^+) \subset S_d(G_2^+)$ for all $d$. This notion of shadow containment leads to a partial order on webs. This section defines this partial order and proves several results about it, especially comparing it to the tableau partial order.

\begin{definition}
Let $w,w'$ be a pair of reduced bottom webs of the same type, either $\mathcal{W}_{2n}^R$ or $\mathcal{W}_{3n}^R$. We say $w \prec_S w'$ if $S(w) \subset S(w')$. We refer to this as the {\it shadow containment partial order}.
\end{definition}

\begin{example} \label{example: shadow containment for sl2 webs}
For example, consider the two $\mathfrak{sl}_2$ webs $w', w$ where $w'$ consists of arcs $(1,4)$ and $(2,3)$ and $w$ consists of arcs $(1,2)$ and $(3,4)$. Then $S_1(w') = [1,4]$, $S_2(w') = [2,3]$, and $S_d(w') = \emptyset$ for all $d>2$. For $w$, we have $S_1(w) = [1,2]\cup [3,4]$ and $S_d(w) = \emptyset$ for all $d>1$. It follows that $S(w) \subset S(w')$ so $w\prec_S w'$.  
\end{example}

\begin{remark}
We cannot extend the partial order $\prec_S$ to all half-plane graphs or even to unreduced webs because it violates the antisymmetry property of posets. Indeed, these larger sets contain distinct half-plane graphs that have the same shadow. See Remark \ref{rmk:unreduced} above for more detail on this phenomenon for unreduced webs. One could bypass this issue by defining $\prec_S$ on equivalence classes of half-plane  graphs grouped according to their shadows.
\end{remark}

The following lemma is an immediate consequence of the definition of the shadow of a half-plane graph together with Lemma \ref{lem:axisdepth}.  It will be useful when we analyze shadow containment.

\begin{lemma}\label{lem:depthshad}
Given two webs $w$ and $w'$, we have $w\prec_S w'$ if and only if the depth at every point on the boundary of $w$ is no greater than the depth at the corresponding point in $w'$.
\end{lemma}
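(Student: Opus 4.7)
The plan is essentially to invoke Lemma \ref{lem:axisdepth} directly, since webs are special cases of half-plane graphs. First I would observe that both $w$ and $w'$ are half-plane graphs (bottom webs satisfy the definition, with $V_{int}$ consisting of the internal trivalent vertices of the web). The boundary of a bottom web lies on the $x$-axis of the upper half-plane, so the phrase ``depth at every point on the boundary of $w$'' coincides with ``depth at every point on the $x$-axis'' in the sense of Lemma \ref{lem:axisdepth}.

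Next I would unpack the shadow containment condition $w \prec_S w'$, which by definition means $S_d(w) \subseteq S_d(w')$ for every $d \geq 0$. By the definition of $S_d$, a point $x$ on the $x$-axis lies in $S_d(w)$ precisely when $x$ belongs to a $k$-band of $w$ for some $k \geq d$, i.e., precisely when the depth of $w$ at $x$ is at least $d$. Thus $S_d(w) \subseteq S_d(w')$ for all $d$ if and only if, at every $x$ on the $x$-axis, the depth in $w$ is at most the depth in $w'$.

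The main (and only) obstacle is really just making sure that the ``depth at a point on the $x$-axis'' is well-defined at boundary vertices of the web, where the $x$-axis meets an edge; but this is handled in the discussion preceding the definition of the boundary word, where one considers depths of the faces immediately to the left and right of each boundary vertex. Once this point is clarified, the lemma follows immediately from Lemma \ref{lem:axisdepth} applied to $G_1^+ = w$ and $G_2^+ = w'$. I would write the proof as a one-paragraph observation that everything reduces to Lemma \ref{lem:axisdepth}.
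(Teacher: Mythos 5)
Your proposal is correct and matches the paper exactly: the paper states this lemma as an immediate consequence of the definition of the shadow together with Lemma \ref{lem:axisdepth}, which is precisely the reduction you carry out. Your extra remark about depth being well-defined at boundary vertices is a reasonable clarification but not needed beyond what the paper already establishes.
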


In the rest of this section, we compare $\prec_T$ to $\prec_S$ on the sets $\mathcal{W}_{2n}^R$ and $\mathcal{W}_{3n}^R$. We will show that these partial orders coincide for reduced $\mathfrak{sl}_2$ webs but differ for $\mathfrak{sl}_3$ webs. In fact, the shadow containment partial order on $\mathfrak{sl}_3$ webs fails to be ranked.

\subsection{Shadows of $\mathfrak{sl}_2$ webs}

In this section, we analyze $\prec_S$ for $\mathfrak{sl}_2$ webs and show it coincides with $\prec_T$.  {\bf All webs in this subsection are assumed to be {\em reduced} $\mathfrak{sl}_2$ bottom webs.} This means they are crossingless matchings with no additional closed components. 

The next lemma makes more rigorous and general an observation stated in Example~\ref{example: shadow containment for sl2 webs}.

\begin{lemma}
Let $(i_1, j_1), \ldots ,(i_t, j_t)$ be the list of all arcs with nesting number $d$ in a web $w$. Then $S_{d+1}(w) = [i_1, j_1] \cup \cdots \cup [i_t, j_t]$.  In other words, the intervals in $S(w)$ are in bijection with the arcs of $w$, and they are partitioned according to nesting number.
\end{lemma}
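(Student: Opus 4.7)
The plan is to translate the condition $x \in S_{d+1}(w)$ into an assertion about how many arcs of $w$ cover the point $x$, and then exploit the total ordering by containment of nesting arcs in a crossingless matching.

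First I would use the fact, noted in Subsection~\ref{subsec:bandweb}, that for a reduced $\mathfrak{sl}_2$ web the band diagram coincides with $w$ itself, so every arc separates faces of consecutive depths. For a point $x$ in the open sub-interval $(n, n+1)$ between consecutive boundary vertices, the face directly above $x$ has depth equal to the number of arcs $(i, j)$ with $i \leq n$ and $j \geq n+1$, since a straight-upward path from that face to the unbounded face crosses exactly those arcs. Consequently $x \in S_{d+1}(w)$ if and only if at least $d+1$ arcs of $w$ cover $(n, n+1)$.

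Next I would use the fundamental fact about crossingless matchings: any two arcs are either disjoint or strictly nested. Hence the collection of arcs covering a fixed point is linearly ordered by containment, and if there are $m$ such arcs their nesting numbers are exactly $0, 1, \ldots, m-1$. Therefore being covered by at least $d+1$ arcs is equivalent to being covered by an arc of nesting number exactly $d$, which in turn is equivalent to lying in some $[i_k, j_k]$ with $n(i_k,j_k) = d$. This gives the desired equality on the complement of the boundary vertex set, and disjointness of the intervals at a fixed nesting number follows because arcs of the same nesting number can neither nest nor cross.

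Finally I would handle the boundary vertices separately, since $S_{d+1}$ is defined using the closures of bands. If $v$ is the endpoint of an arc $a$ of nesting number $m$, then the two faces adjacent to $v$ have depths $m$ and $m+1$, so $v \in S_{d+1}(w)$ iff $m \geq d$; when $m = d$ the vertex is itself an endpoint of a nesting-$d$ arc, and when $m > d$ the arc $a$ is strictly nested inside the unique nesting-$d$ arc containing it, whose interval $[i_k, j_k]$ then contains $v$ in its interior. The overall argument is essentially bookkeeping with face depths and nesting numbers, and I do not anticipate a substantive obstacle; the only mildly delicate point is the treatment of boundary vertices described above.
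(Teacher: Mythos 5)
Your proof is correct, and it takes a more hands-on route than the paper's. The paper dispatches the lemma in a few lines: since $B(w)=w$ for a reduced $\mathfrak{sl}_2$ web, the intervals of the shadow are exactly the projections of the arcs of $w$ (the remark in Section \ref{sec:halfplane}), and an arc of nesting number $d$ separates faces of depths $d$ and $d+1$, so it projects onto an interval of $S_{d+1}(w)$. You instead verify the set equality pointwise: the depth above a point equals the number of arcs covering it, the covering arcs at a point are totally ordered by containment and hence have nesting numbers $0,1,\dots,m-1$, so membership in $S_{d+1}(w)$ is equivalent to being covered by an arc of nesting number exactly $d$, with boundary vertices handled through closures of faces. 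Your version is more self-contained (it does not lean on the identification of $S_d$ with the projection of the $d$-arcs) and it makes the ``partitioned by nesting number'' claim explicit; the paper's version buys brevity by citing the band-diagram machinery. Two small points are worth making explicit in your write-up, though neither is a substantive gap: the straight-up path only yields depth at most the number of covering arcs, so you should add the matching lower bound (any path to the unbounded face must cross each covering arc, or equivalently invoke Proposition \ref{prop:bandwordendpoints}, which shows the deeper face lies below each arc); and the reverse inclusion at boundary vertices (a vertex lying in some $[i_k,j_k]$ is either an endpoint of that nesting-$d$ arc or the endpoint of an arc nested inside it, so $m\geq d$), which is just the mirror of the case analysis you already wrote.
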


\begin{proof}
As observed in Subsection \ref{subsec:bandweb}, for a reduced $\mathfrak{sl}_2$ web $w$ with band diagram $B(w)$, we have $B(w)=w$. Intervals in $S(w)$ correspond to endpoints of the arcs in $B(w)$. In this case, then, the intervals in $S(w)$ also correspond to the arcs in $w$. Since nesting number is defined the same way for $\mathfrak{sl}_2$ webs and band diagrams, arcs of nesting number $d$ -- which separate faces of depth $d$ and $d+1$ -- correspond to intervals in $S_{d+1}(w)$.
\end{proof}

Given $w\in \mathcal{W}_{2n}^R$ recall that $r(w)$ is the sum of the nesting numbers of the arcs of $w$.  We will prove a sequence of lemmas whose ultimate goal is to show that $\prec_T$ and $\prec_S$ coincide for reduced $\mathfrak{sl}_2$ bottom webs.  We begin by showing that $w \prec_S w'$ implies the same order for nesting numbers.

\begin{lemma}\label{shadowheight}
Let $w,w'\in \mathcal{W}_{2n}^R$. If $w\prec_S w'$ then $r(w) < r(w')$.
\end{lemma}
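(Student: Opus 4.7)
My plan is to express a single scalar invariant of the shadow---its total length---as an affine function of $r(w)$, so that strict shadow containment translates directly into strict rank comparison.

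For any $w \in \mathcal{W}_{2n}^R$ I would set $|S(w)| := \sum_{d \geq 1} |S_d(w)|$, where $|S_d(w)|$ denotes the one-dimensional Lebesgue measure of $S_d(w) \subseteq \mathbb{R}$. The preceding lemma, which describes $S_{d+1}(w)$ as the disjoint union of the intervals $[i_a, j_a]$ over the arcs $a$ of $w$ with nesting number $d$, lets this sum be rewritten as $|S(w)| = \sum_{a \in w}(j_a - i_a)$.

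Next, I would invoke the parity of arc lengths: for each arc $a$, the $j_a - i_a - 1$ boundary vertices strictly between $i_a$ and $j_a$ must be matched among themselves, so $j_a - i_a$ is odd. Write $j_a - i_a = 2 m(a) + 1$, where $m(a)$ is exactly the number of arcs of $w$ strictly inside $a$. The sum $\sum_a m(a)$ counts ordered pairs $(a,b)$ of arcs of $w$ with $b \subset a$, which equals $\sum_b n(b) = r(w)$ by Lemma \ref{lem:sl2rank}. Consequently,
\[
|S(w)| \;=\; \sum_a (2m(a)+1) \;=\; 2 r(w) + n.
\]

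Finally, if $w \prec_S w'$, then $S_d(w) \subseteq S_d(w')$ for every $d$ with strict containment for at least one $d$. Since each $S_d$ is a disjoint union of finitely many closed intervals with integer endpoints and positive integer length, strict set containment forces strict inequality of measures, so $|S(w)| < |S(w')|$. Applying the identity above to both webs yields $2r(w) + n < 2r(w') + n$, i.e.\ $r(w) < r(w')$. The only step demanding a moment's care is the passage from strict set containment in some $S_d$ to strict inequality of measures; this is clean in the $\mathfrak{sl}_2$ setting because any ``missing interval'' in $S_d(w)$ relative to $S_d(w')$ has positive integer length.
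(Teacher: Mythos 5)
Your proof is correct, but it takes a genuinely different route from the paper's. The paper argues locally: it rewrites $r$ as a sum over boundary vertices, contributing half the nesting number of the incident arc at each vertex, uses Lemma \ref{lem:depthshad} to compare depths pointwise along the boundary and conclude $r(w)\leq r(w')$, and then extracts strictness by examining the first boundary vertex at which the two shadows differ. You instead establish a global identity: combining the preceding lemma (the intervals of $S_{d+1}$ are the arcs of nesting number $d$) with the parity count $j_a-i_a=2m(a)+1$ and the observation $\sum_a m(a)=\sum_b n(b)=r(w)$ (Lemma \ref{lem:sl2rank}), you get $|S(w)|=2r(w)+n$, after which proper shadow containment yields $r(w)<r(w')$ in one stroke. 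Two facts you use implicitly are true and easy but worth stating: the intervals $[i_a,j_a]$ over arcs of a fixed nesting number are pairwise disjoint (arcs of a crossingless matching with equal nesting number can neither cross nor nest), which is what justifies $|S_{d+1}(w)|=\sum_a(j_a-i_a)$; and the containment must be proper in some $S_d$ because a reduced $\mathfrak{sl}_2$ web is determined by its shadow, so $w\neq w'$ forces $S(w)\neq S(w')$ (the paper makes the same tacit assumption when it says ``the shadow containment is proper''). The payoff of your approach is the exact linear relation between rank and total shadow length, which delivers the weak and strict inequalities simultaneously and even quantifies the rank gap; the paper's pointwise depth comparison is less structured but is closer in spirit to the boundary-depth arguments it reuses in the $\mathfrak{sl}_3$ setting.
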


\begin{proof}
Let $w,w'\in \mathcal{W}_{2n}^R$ with $S(w) \subset S(w')$. Rather than summing depth over arcs of a web, we will compute the rank function $r$ by summing over endpoints of the web, adding $\frac{d}{2}$ for each endpoint of an arc with nesting number $d$. 

Every arc in an $\mathfrak{sl}_2$ web separates two neighboring faces with depths that differ by one, and the smaller of those depths is the nesting number of the arc.  By Lemma \ref{lem:depthshad}, since $S(w)\subset S(w')$, the depth at any point on the boundary of $w$ is no larger than the depth at the same point in $w'$. It follows that $r(w')\geq r(w)$.

For the rest of this proof, we prove that in fact $r(w')>r(w)$.  
Since the shadow containment is proper, there is at least one integral boundary point around which the shadows of $w$ and $w'$ differ.  Suppose point $i$ is the first boundary point at which the shadows differ. Every boundary point of a $\mathfrak{sl}_2$ web is either the start or end of an arc; the only way for the shadows of $w, w'$ to coincide at $1, 2, \ldots, i-1$ but not at $i$ is for one of $w, w'$ to have an arc start at $i$ while the other has an arc end at $i$.  Only one of these preserves the containment $S(w) \subseteq S(w')$, namely the depth in $w'$ at $i-\frac{1}{2}$ is $d-1$ and at $ i+ \frac{1}{2}$ is $d$, while the depth in $w$ at $i-\frac{1}{2}$ is $d-1$ and at $i+\frac{1}{2}$ is $d-2$.  So $i$ contributes $\frac{d-1}{2}$ to $r(w')$ and $\frac{d-2}{2}$ to $r(w)$.   
 \end{proof}

We now show that the tableau partial order implies shadow containment.

\begin{lemma}\label{cont1}
If $w \prec_T w'$ then $w \prec_S w'$.
\end{lemma}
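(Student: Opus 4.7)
The plan is to proceed by induction on the length of a chain of covers realizing $w \prec_T w'$ in the tableau order. Since the relation $\prec_S$ is manifestly transitive (set inclusion is transitive), it suffices to establish the lemma for a single covering step $w \to w'$.

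By Remark \ref{rmk:sl2Hasse}, such a cover has the following explicit form: there exist indices $j < i < i+1 < k$ so that $w$ contains the arcs $(j,i)$ and $(i+1,k)$ while $w'$ contains the arcs $(j,k)$ and $(i,i+1)$, and all other arcs of $w$ and $w'$ agree. By Lemma \ref{lem:depthshad}, it then suffices to verify that the depth of $w$ at every point on the $x$-axis is at most the depth of $w'$ at the same point.

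For a reduced $\mathfrak{sl}_2$ web, the depth at a point $x$ on the $x$-axis is simply the number of arcs whose endpoints straddle $x$. Since the set of arcs of $w$ and $w'$ other than $(j,i),(i+1,k),(j,k),(i,i+1)$ are identical, only these four arcs contribute to the difference in depth. A brief case analysis finishes the argument: for $x<j$ or $x>k$, none of the four arcs encloses $x$; for $x\in (j,i)$, the arc $(j,i)$ encloses $x$ in $w$ and $(j,k)$ encloses $x$ in $w'$, contributing equally; for $x\in(i+1,k)$, symmetrically $(i+1,k)$ and $(j,k)$ each contribute one; and for $x\in(i,i+1)$, no arc of $w$ encloses $x$ while both $(j,k)$ and $(i,i+1)$ of $w'$ do. So the pointwise depth of $w'$ is everywhere at least that of $w$, with a strict increase on $(i,i+1)$, giving $S(w)\subset S(w')$.

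There is no real obstacle here: the only conceptual point is recognizing that the covering relation for $\prec_T$ corresponds via $\psi$ to the simple \emph{uncrossing-then-renesting} move on arcs described in Remark \ref{rmk:sl2Hasse}, after which the comparison of shadows reduces to a short local calculation near the four affected endpoints. Everything else follows from transitivity and Lemma \ref{lem:depthshad}.
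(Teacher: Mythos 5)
Your proof is correct and follows essentially the same route as the paper: reduce to a single Hasse-diagram edge by transitivity, use the explicit four-point arc description of the cover from Remark \ref{rmk:sl2Hasse}, and compare boundary depths, which change only on $(i,i+1)$ where they increase by two. Your per-point count of straddling arcs is just a slightly more explicit version of the paper's depth comparison.
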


\begin{proof}
If $w \prec_T w'$ then there is a directed path from $w$ to $w'$ in the Hasse diagram for $\prec_T$. We will show that for each edge $w\rightarrow w'$ in the Hasse diagram for $\prec_T$ we also have $w \prec_S w'$. Once this is established, the lemma follows by induction on path length. 

If $w\rightarrow w'$ then $w$ and $w'$ are identical except for two arcs on four integral boundary points $j<i<i+1<k$ where $w$ has arcs $(j,i)$ and $(i+1, k)$ and $w'$ has arcs $(j,k)$ and $(i,i+1)$. Figure \ref{fig:Hasse-edge} illustrates this relationship using red regions to represent the locations of arcs common to $w$ and $w'$.  

\begin{figure}[h]
\begin{tikzpicture}[scale=.5]
\draw[style=dashed, <->] (0,0)--(7,0);

\draw[radius=.1, fill=black](1.1,0)circle;
\draw[radius=.1, fill=black](3.1,0)circle;
\draw[radius=.1, fill=black](3.9,0)circle;
\draw[radius=.1, fill=black](5.9,0)circle;
\draw[style=thick] (1.1,0) arc (180:0: 1cm); 
\draw[style=thick] (3.9,0) arc (180:0: 1cm); 
\draw[style=thick, color=black, fill=red] (.5,0)--(.9,0)--(.9,2.4)--(6.1,2.4)--(6.1,0)--(6.5,0)--(6.5,2.7)--(.5,2.7)--(.5,0);
\draw[style=thick, color=black, fill=red] (1.4,0)--(2.7,0)--(2.7,.35)--(1.4,.35)--(1.4,0);
\draw[style=thick, color=black, fill=red] (4.3,0)--(5.6,0)--(5.6,.35)--(4.3,.35)--(4.3,0);
\node at (2.05,.58) {\tiny{$d+1$}};
\node at (4.9,.58) {\tiny{$d+1$}};
\node at (3.5,1.2) {\tiny{$d$}};
\node at (1,-.35) {\tiny{$j$}};
\node at (3,-.35) {\tiny{$i$}};
\node at (4,-.35) {\tiny{$i+1$}};
\node at (6,-.35) {\tiny{$k$}};
\node at (3.5,-1.2) {{$w$}};
\node at (7.5,-1.2) {\small{$\longrightarrow$}};

\draw[style=dashed, <->] (8,0)--(15,0);

\draw[radius=.1, fill=black](12.2,0)circle;
\draw[radius=.1, fill=black](13.8,0)circle;
\draw[radius=.1, fill=black](9.2,0)circle;
\draw[radius=.1, fill=black](10.8,0)circle;
\draw[style=thick] (9.2,0) arc (180:0: 2.3cm); 
\draw[style=thick] (10.8,0) arc (180:0: .7cm); 
\draw[style=thick, color=black, fill=red] (8.5,0)--(8.9,0)--(8.9,2.4)--(14.1,2.4)--(14.1,0)--(14.5,0)--(14.5,2.7)--(8.5,2.7)--(8.5,0);
\draw[style=thick, color=black, fill=red] (9.4,0)--(10.6,0)--(10.6,.35)--(9.4,.35)--(9.4,0);
\draw[style=thick, color=black, fill=red] (12.4,0)--(13.6,0)--(13.6,.35)--(12.4,.35)--(12.4,0);
\node at (11.5,.2) {\tiny{$d+2$}};
\node at (11.5,1.2) {\tiny{$d+1$}};
\node at (9.5,2) {\tiny{$d$}};
\node at (13.5,2) {\tiny{$d$}};
\node at (9.25,-.35) {\tiny{$j$}};
\node at (10.75,-.35) {\tiny{$i$}};
\node at (12.25,-.35) {\tiny{$i+1$}};
\node at (13.75,-.35) {\tiny{$k$}};
\node at (11.5,-1.2) {{$w'$}};
\end{tikzpicture}
\caption{Depth change along an edge in the Hasse diagram} \label{fig:Hasse-edge}
\end{figure}
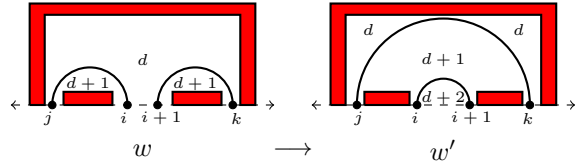
By construction, the depths along the boundaries of $w'$ and $w$ are the same except between boundary points $i$ and $i+1$. If the depth between $i$ and $i+1$ in $w$ is $d$ then it is $d+2$ in $w'$.  It follows that $S_i(w) = S_i(w')$ for $i \neq d+1,d+2$ and $S_i(w)\cup [i,i+1] = S_i(w')$ for $i=d+1, d+2$.  Hence $w\prec_S w'$. 
\end{proof}

We now prove the converse of the previous result: shadow containment implies the tableau partial order.

\begin{lemma}\label{cont2}
If $w \prec_S w'$ then $w \prec_T w'$.
\end{lemma}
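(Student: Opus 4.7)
The plan is to induct on the rank difference $r(w') - r(w)$ from Lemma~\ref{lem:sl2rank}, aiming at each step to produce a single Hasse cover $w \prec_T w''$ with $w'' \preceq_S w'$. By Lemma~\ref{shadowheight} this rank difference is nonnegative and vanishes only when $w = w'$, so the induction reduces to finding, whenever $w \prec_S w'$ is strict, one cover move in the Hasse diagram of $\prec_T$ that still lands weakly below $w'$ in shadow containment; applying the inductive hypothesis to $w'' \preceq_S w'$ (where the rank gap has dropped by one) then yields $w'' \preceq_T w'$ and therefore $w \prec_T w'$.

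For the cover move itself, recall from Remark~\ref{rmk:sl2Hasse} that every cover $w \to w''$ for $\prec_T$ replaces a pair of arcs $(u, i)$, $(i+1, v)$ in $w$ by $(u, v)$, $(i, i+1)$. In terms of depth profiles this replacement leaves the depth of $w$ unchanged outside the open interval $(i, i+1)$ and bumps it up by exactly $2$ on $(i, i+1)$. Consequently the move lands weakly below $w'$ in the shadow order if and only if the depth gap $d_{w'}(x) - d_{w}(x)$ at $x = i + \tfrac{1}{2}$ is at least $2$, and additionally the boundary word of $w$ reads $-, +$ at positions $i, i+1$ (so that the two required arcs exist at a common nesting level).

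To locate such an $i$, I take $i_0$ to be the leftmost boundary position at which the boundary words of $w$ and $w'$ differ. Since the prefix up to $i_0 - 1$ is common and the nonnegative depth gap $d_{w'} - d_{w}$ changes by one of $\{-2, 0, +2\}$ at each integer step, the disagreement forces $\omega_w(i_0) = -$ and $\omega_{w'}(i_0) = +$, pushing the gap from $0$ to $2$. Let $r$ be the smallest index exceeding $i_0$ at which the gap returns to $0$. At position $r$ the gap drops by $2$, so $\omega_w(r) = +$, and for every $k$ with $i_0 \leq k \leq r-1$ the gap at $k + \tfrac{1}{2}$ is at least $2$. Because the finite sequence $\omega_w(i_0), \omega_w(i_0+1), \dots, \omega_w(r)$ starts with $-$ and ends with $+$, it must contain a consecutive pattern $-, +$; choose $i^* \in [i_0, r-1]$ realizing this pattern. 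Both compatibility conditions from the previous paragraph are then satisfied at $i^*$, and the cover move there produces the required $w''$.

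The main obstacle is the simultaneity of these two compatibility conditions: the naive attempt at $i = i_0$ breaks whenever $\omega_w(i_0 + 1) = -$, so one needs a mechanism to guarantee that the depth gap stays wide enough somewhere where the $-, +$ pattern is actually available in $\omega_w$. The extremal choice of $r$ resolves this by carving out a window $[i_0, r-1]$ on which the depth gap is uniformly at least $2$ and on which the boundary word of $w$ must turn from $-$ into $+$, so a pigeonhole on signs inside the window delivers an index $i^*$ meeting both requirements. Once such an $i^*$ is in hand, the verification $w'' \preceq_S w'$ is an immediate depth-profile comparison (the profiles agree with those of $w$ off $(i^*, i^*+1)$, and the $+2$ bump on that interval is absorbed by the gap), closing the induction via Lemma~\ref{shadowheight}.
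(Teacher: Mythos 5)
Your argument is correct, and while it shares the paper's basic engine---produce a single Hasse cover $w \rightarrow w''$ as in Remark \ref{rmk:sl2Hasse}, whose only effect on the depth profile along the axis is a $+2$ bump on one unit interval, check $w'' \preceq_S w'$, and induct---the route is genuinely different in its details. The paper runs a double induction on the number of arcs and on the number of nesting-zero arcs of $w$ lying under the leftmost nesting-zero arc $(1,j)$ of $w'$, with a separate case in which a shared arc is removed from both webs, and it verifies shadow containment of the modified web by a parity argument (the endpoint $t_1$ is even, forcing depth at least $2$ over $[t_1,t_1+1]$ in $w'$). You instead induct on the rank gap $r(w')-r(w)$, which is well-founded by Lemma \ref{shadowheight}, and you locate the cover by a depth-gap analysis: at the leftmost boundary disagreement the (even, nonnegative) gap jumps to $2$, it stays at least $2$ until it first returns to $0$ at some position $r$ where the word of $w$ reads $+$, and a pigeonhole on signs inside that window yields a consecutive $-,+$ in the boundary word of $w$, so the $+2$ bump from the cover is absorbed by the gap with no parity considerations and no arc-removal case. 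The equivalence of shadow containment with pointwise depth comparison (Lemma \ref{lem:depthshad}) and the depth computation for a Hasse edge from the proof of Lemma \ref{cont1} are exactly the facts you invoke, so the verification step in your version is essentially immediate once $i^*$ is chosen; what the paper's version buys in exchange is that it stays entirely within the explicit arc combinatorics used elsewhere in the $\mathfrak{sl}_2$ analysis.
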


\begin{proof}
We will argue via double induction. The claim is trivial for all pairs of webs in $\mathcal{W}_{2n}^R$ when $n= 1$ or $2$ and is not much more complicated when $n=3$. 

Now assume that if $w, w'$ is a pair of webs which each have fewer than $n$ arcs and if $w\prec_S w'$ then $w \prec_T w'$. Let $w$ and $w'$ be webs with $n$ arcs such that $w \prec_S w'$. Consider the arc $(1,j)$ in $w'$ which necessarily has nesting number $0$.  Note that we can write the arcs of nesting number $0$ in $w$ as $(1,t_1), (t_1+1,t_2),(t_2+1,t_3),\ldots$. Suppose $k+1$ of these arcs in $w$ have both endpoints in the interval $[1,j]$. One of the $t_i$ must be $j$ since otherwise there exists $i$ with $t_i +1 \leq j < t_{i+1}$.  In particular, $j+\frac{1}{2} \in S_1(w)$ but $j+\frac{1}{2} \not \in S_1(w')$, contradicting the assumption of shadow containment. 

Suppose $k=0$ so $t_1=j$.  In this case, we can remove the arc $(1,j)$ from both matchings $w, w'$ and maintain shadow containment. What remains is a pair of webs with $n-1$ arcs, and the inductive hypothesis then implies $w\prec_T w'$.

Now suppose the arcs of nesting number $0$ in $w$ with both endpoints in the interval $[1,j]$ are $(1, t_1), (t_1+1, t_2), (t_2+1, t_3), \ldots (t_k+1, t_{k+2}) = (t_k+1,j)$ for some $k\geq 1$. 
Let $w''$ be the web that differs from $w$ only by having nested arcs $(1,t_2)$ and $(t_1, t_1+1)$ as in Figure \ref{fig:localchanges}. By the discussion of $\prec_T$ for $\mathfrak{sl}_2$ in Remark \ref{rmk:sl2Hasse}, there is an edge $w \rightarrow w''$ in the Hasse diagram for $\prec_T$. Lemma \ref{cont1} proved that $w\prec_S w''$. In fact, we know exactly how the shadows of $w$ and $w''$ differ: the shadows are exactly the same except the interval $[t_1, t_1+1]$ has depth $2$ in $w''$ but depth $0$ in $w$.  Therefore $S_i(w'') = S_i(w) \cup [t_1, t_1+1]$ for $i=1,2$ and $S_i(w'') = S_i(w)$ for all $i>2$. Since $S(w)\subset S(w')$ by assumption, it follows that $S(w'')\subset S(w')$ as long as $[t_1,t_1+1]\subset S_i(w')$ for $i=1,2$. We establish this via a parity argument.

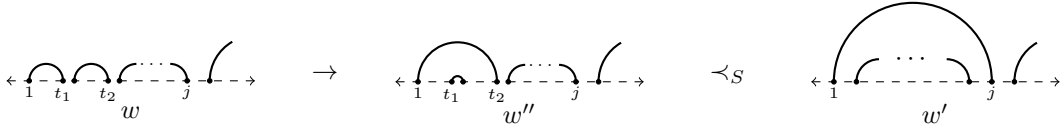
\begin{figure}[h]
\raisebox{2pt}{\begin{tikzpicture}[scale=.3]
\draw[style=dashed, <->] (0,0)--(11,0);
\draw[radius=.1, fill=black](1,0)circle;
\draw[radius=.1, fill=black](2.5,0)circle;
\draw[radius=.1, fill=black](3,0)circle;
\draw[radius=.1, fill=black](4.5,0)circle;
\draw[radius=.1, fill=black](5,0)circle;
\draw[radius=.1, fill=black](8,0)circle;
\draw[radius=.1, fill=black](9,0)circle;
\draw[style=thick] (1,0) arc (180:0: .75cm); 
\draw[style=thick] (3,0) arc (180:0: .75cm);
\draw[style=thick] (5,0) arc (180:90: .75cm); 
\draw[style=thick] (8,0) arc (0:90: .75cm); 
\draw[style=thick] (9,0) arc (180:120: 2cm);
\node at (6.5,.75) {\tiny{$\cdots$}};

\node at (1,-.5) {\tiny{$1$}};
\node at (2.5,-.6) {\tiny{$t_1$}};
\node at (4.5,-.6) {\tiny{$t_2$}};
\node at (8,-.5) {\tiny{$j$}};

\node at (5.5,-1.4) {{$w$}};
\end{tikzpicture}}\hspace{.25in} \raisebox{20pt}{$\rightarrow$}\hspace{.25in}
\begin{tikzpicture}[scale=.3]
\draw[style=dashed, <->] (0,0)--(11,0);
\draw[radius=.1, fill=black](1,0)circle;
\draw[radius=.1, fill=black](2.5,0)circle;
\draw[radius=.1, fill=black](3,0)circle;
\draw[radius=.1, fill=black](4.5,0)circle;
\draw[radius=.1, fill=black](5,0)circle;
\draw[radius=.1, fill=black](8,0)circle;
\draw[radius=.1, fill=black](9,0)circle;
\draw[style=thick] (1,0) arc (180:0: 1.75cm); 
\draw[style=thick] (3,0) arc (0:180: .25cm);
\draw[style=thick] (5,0) arc (180:90: .75cm); 
\draw[style=thick] (8,0) arc (0:90: .75cm); 
\draw[style=thick] (9,0) arc (180:120: 2cm);
\node at (6.5,.75) {\tiny{$\cdots$}};

\node at (1,-.5) {\tiny{$1$}};
\node at (2.5,-.6) {\tiny{$t_1$}};
\node at (4.5,-.6) {\tiny{$t_2$}};
\node at (8,-.5) {\tiny{$j$}};

\node at (5.5,-1.4) {{$w''$}};
\end{tikzpicture}
\hspace{.25in} \raisebox{20pt}{$\prec_S$} \hspace{.25in}
\begin{tikzpicture}[scale=.3]
\draw[style=dashed, <->] (0,0)--(11,0);
\draw[radius=.1, fill=black](1,0)circle;
\draw[radius=.1, fill=black](2,0)circle;
\draw[radius=.1, fill=black](8,0)circle;
\draw[radius=.1, fill=black](9,0)circle;
\draw[radius=.1, fill=black](7,0)circle;
\draw[style=thick] (1,0) arc (180:0: 3.5cm); 
\draw[style=thick] (2,0) arc (180:90: 1cm);
\draw[style=thick] (7,0) arc (0:90: 1cm); 
\draw[style=thick] (9,0) arc (180:120: 2cm);
\node at (4.5,1) {$\cdots$};

\node at (1,-.5) {\tiny{$1$}};

\node at (8,-.5) {\tiny{$j$}};

\node at (5.5,-1.4) {{$w'$}};
\end{tikzpicture}
\caption{Webs $w$, $w''$, and $w'$} \label{fig:localchanges}
\end{figure}

The two endpoints of each arc have different parity since the points below the arc are paired by the matching.  As $(1, t_1), (t_1+1, t_2), (t_2+1, t_3), \ldots (t_k+1, t_{k+2}) = (t_k+1,j)$ are arcs in $w$, it follows that $t_1, t_2, t_3, \ldots, t_{k+2}=j$ are all even.  
Moreover the right endpoint of every arc with even nesting number must be even while the left endpoint of every arc with even nesting number must be odd. Since $1<t_1<j$, $t_1$ lies below the arc $(1,j)$ of nesting number 0 in $w'$. Hence, the depth of $[t_1,t_1+1]$ cannot be 0 in $w'$. Because $t_1$ is even, it also cannot be the right endpoint of an arc of nesting number 1. This means the depth of $[t_1,t_1+1]$ is at least 2 in $w'$, so $[t_1,t_1+1]\subset S_i(w')$ for $i=1,2$.  

Therefore, $S(w'')\subset S(w')$, and by definition $w'' \prec_S w'$. Moreover note that $w''$ has one fewer arc of nesting number $0$ with endpoints between $1$ and $j$ than does $w$. By induction on $k$ we have $w''\prec_T w'$. Since $w\prec_T w''$ and $w''\prec_T w'$ we conclude $w\prec_T w'$ as desired.
\end{proof}

Putting the two previous lemmas together, we have proven that $\prec_S$ and $\prec_T$ are the same for $\mathfrak{sl}_2$ webs.

\begin{theorem}\label{thm:samesl2}
Let $w,w'\in \mathcal{W}_{2n}^R$. Then $w\prec_S w'$ if and only if $w\prec_T w'$. In other words, the partial orders $\prec_T$ and $\prec_S$ coincide on the set $\mathcal{W}_{2n}^R$.
\end{theorem}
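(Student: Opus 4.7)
The plan is to deduce Theorem \ref{thm:samesl2} directly from the two preceding lemmas, which already package the two directions of the biconditional. Lemma \ref{cont1} supplies the implication $w \prec_T w' \Rightarrow w \prec_S w'$, and Lemma \ref{cont2} supplies the converse $w \prec_S w' \Rightarrow w \prec_T w'$. Combining these establishes the equivalence stated in the theorem, so the proof reduces essentially to a single sentence citing both lemmas.

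It is worth noting the asymmetry between the two halves. The forward direction is the easier one: it suffices to verify that a single covering move in the Hasse diagram for $\prec_T$, which swaps two side-by-side arcs $(j,i), (i+1,k)$ for the nested pair $(j,k), (i,i+1)$, only raises the depth of the region above $[i, i+1]$ and leaves depths elsewhere unchanged. Shadow containment is therefore preserved along every edge of the Hasse diagram and hence along every chain.

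The reverse direction, carried out in Lemma \ref{cont2}, is the main obstacle and constitutes the mathematical content of the theorem. Its proof runs a double induction on the number of arcs of $w$ and on the number of nesting-$0$ arcs of $w$ whose endpoints lie inside the leftmost outer arc $(1,j)$ of $w'$. The crucial ingredient is a parity argument: because the two endpoints of each arc in an $\mathfrak{sl}_2$ web have opposite parity, the intermediate breakpoints $t_1, t_2, \ldots$ in $w$ are all even, which forces the depth of $[t_1, t_1+1]$ in $w'$ to be at least $2$ and thus allows the local Hasse step $w \to w''$ to stay within the shadow of $w'$. Granting both lemmas, Theorem \ref{thm:samesl2} follows at once.
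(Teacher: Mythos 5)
Your proposal is correct and matches the paper exactly: the theorem is obtained by combining Lemma \ref{cont1} (tableau order implies shadow containment, via a local depth computation along each Hasse edge) with Lemma \ref{cont2} (the converse, via the double induction and parity argument you describe). Your summary of both lemmas is accurate, so nothing further is needed.
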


\subsection{$\mathfrak{sl}_3$ web shadows}

In this section, we show that the shadow containment partial order is not the same as the tableaux partial order for reduced $\mathfrak{sl}_3$ webs. In fact, the partial order $\prec_S$ is not even ranked in the $\mathfrak{sl}_3$ web case. 

We accomplish this in three steps. First we give a lemma that characterizes all options for the boundary word and depths along the boundary when there is an edge $w \stackrel{s_i}{\rightarrow} w'$ in the Hasse diagram for $\prec_T$.  Then we prove that the covering relation for $\prec_S$  contains the covering relation for $\prec_T$. This implies that if $\prec_S$ were ranked, then it would have the same rank function as $\prec_T$. We conclude with an example of reduced $\mathfrak{sl}_3$ webs $w$ and $w'$ with $w\prec_S w'$ and $r(w)>r(w')$.

\begin{lemma}\label{lem:edge to bdry word}
Let $w,w'\in \mathcal{W}_{3n}^R$ be webs with an edge $w\stackrel{s_i}{\rightarrow} w'$ in the Hasse diagram for $\prec_T$. Then the boundary words for $w$ and $w'$ agree except in positions $i,i+1,$ where exactly one of the following cases applies.  Furthermore the depths along the boundary for $w$ and $w'$ agree except in the interval $(i,i+1)$, where the depth of $w'$ is one greater than the depth of $w$ in Cases 1 and 2, and two greater than the depth of $w$ in Case 3.
\begin{figure}[h]
\begin{tabular}{|c|c|c||c|}
\hline
&   {\bf Boundary word for $w$}   & {\bf Boundary word for $w'$} & {\bf Change in depth in $(i, i+1)$}  \\  \hline
{\bf Case 1}: & $0+$ & $+0$ & $1$  \\ \hline
{\bf Case 2}: & $-0$ & $0-$ & $1$  \\ \hline
{\bf Case 3}: & $-+$ & $+-$ & $2$  \\ \hline
\end{tabular}
\caption{ $i^{th}$ and $(i+1)^{st}$ boundary symbols along edge  $w\stackrel{s_i}{\longrightarrow} w'$}\label{fig:yword}
\end{figure}
\end{lemma}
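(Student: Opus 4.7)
The plan is to translate the edge $w \stackrel{s_i}{\to} w'$ into a statement about the tableaux $T = \psi(w)$ and $T' = \psi(w')$, and then read off the effects on boundary words and boundary depths directly from the definitions in Section~\ref{sec:webtab}. By definition of $\prec_T$, the edge means $T' = s_i \cdot T$ with $i$ strictly above $i+1$ in $T'$; equivalently, $i+1$ is strictly above $i$ in $T$, and every other entry of $T$ is unchanged. Since the $j^{\mathrm{th}}$ boundary symbol is determined entirely by which row of the tableau contains $j$, the boundary words of $w$ and $w'$ can differ only at positions $i$ and $i+1$, and at those positions the symbols simply swap. In a tableau of shape $(n,n,n)$ the possible values of $(\text{row of } i+1, \text{row of } i)$ in $T$ are $(1,2)$, $(2,3)$, and $(1,3)$, which translate into boundary symbol pairs $0+$, $-0$, $-+$ in $w$ and, after swapping, $+0$, $0-$, $+-$ in $w'$. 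This is exactly the trichotomy claimed.

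For the depth claim I would use that the boundary word records the cumulative signed changes of depth at integer boundary points, with $+$, $0$, $-$ contributing $+1$, $0$, $-1$ respectively, starting from depth $0$ on the far left because the unbounded face meets the $x$-axis there. Since the two boundary words agree at positions $1, \dots, i-1$, the boundary depths of $w$ and $w'$ coincide on the entire half-axis to the left of $i$. Moreover, in each of the three cases the multiset of symbols at positions $i$ and $i+1$ is the same in $w$ as in $w'$, so their signed sums over those two positions agree; hence the boundary depths coincide again on the interval immediately to the right of $i+1$, and the subsequent symbol-by-symbol agreement propagates this equality to the entire right half-axis. Consequently the only interval where a discrepancy can possibly occur is the open interval $(i, i+1)$.

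Finally I would compute the interior depth on $(i,i+1)$ by comparing only the signed contribution at position $i$: in Case~1, $w'$ has $+$ where $w$ has $0$, giving a gap of $+1$; in Case~2, $w'$ has $0$ where $w$ has $-$, again giving $+1$; in Case~3, $w'$ has $+$ where $w$ has $-$, giving $+2$. These match the final column of the table. I do not expect a substantive obstacle here; the proof is essentially a direct bookkeeping argument from the bijection $\psi$ combined with the additive characterization of depth along the boundary developed in Section~\ref{sec:halfplane}. The only subtlety to watch is the sign conventions in the definition of the boundary word, which must be applied consistently from left to right.
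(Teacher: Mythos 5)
Your proposal is correct and follows essentially the same route as the paper's proof: translate the Hasse-diagram edge into the tableau swap $T' = s_i\cdot T$, observe that boundary symbols are determined by which row contains each entry (so only positions $i$ and $i+1$ can change, and they merely swap), enumerate the three possible row pairs, and read off the depth difference from the prefix-sum characterization of boundary depth. Your explicit verification that the depths re-synchronize to the right of $i+1$ (because the two positions carry the same multiset of symbols) is a slightly more careful bookkeeping of a step the paper states without elaboration, but it is not a different argument.
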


\begin{proof}
Let $w,w'\in \mathcal{W}_{3n}^R$ such that $w \stackrel{s_i}{\rightarrow} w'$ is an edge in the Hasse diagram for $\prec_T$. It follows from the definition that $i, i+1$ are in different rows of the tableaux for $w, w'$ and that $i$ is in a higher row in the tableau for $w'$.  Moreover, the only places where the boundary words for $w$ and $w'$ differ are in positions $i$ and $i+1$, using the map from boundary words to standard tableaux.

 According to the bijection between reduced webs and standard tableaux described in Subsection \ref{sub:tabweb}, the boundary symbols for $w$ and $w'$ in positions $i$ and $i+1$ must be among the cases listed in the table in Figure \ref{fig:yword}.  The boundary word also encodes depth of the faces along the boundary.  In the first two cases of Figure \ref{fig:yword}, we see that if the depth at $i+\frac{1}{2}$ in $w'$ is $d$ then it is $d-1$ in $w$. In the final case, if the depth at $i+\frac{1}{2}$ in $w'$ is $d$ then it is $d-2$ in $w$. The depths along the boundary at every other point in $[1,i) \cup (i+1,3n]$ are the same in $w, w'$.  This proves the claim.
\end{proof}

The following corollary uses the depth constraints from the previous lemma to show that no web can have a shadow lying strictly between that of two webs $w,w'$ with $w \stackrel{s_i}{\longrightarrow} w'$.

\begin{corollary} \label{cor:TvsS}
Let $w,w'\in \mathcal{W}_{3n}^R$ be webs with an edge $w\stackrel{s_i}{\longrightarrow} w'$ in the Hasse diagram for $\prec_T$. Then $w\prec_S w'$ and there is no web $\overline{w}$ such that $w\prec_S \overline{w} \prec_S w'$. 
\end{corollary}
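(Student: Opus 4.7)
The plan is to separate the two conclusions. The first inequality $w \prec_S w'$ is essentially immediate: Lemma \ref{lem:edge to bdry word} tells us that the depths of $w$ and $w'$ along the $x$-axis agree everywhere except in the interval $(i, i+1)$, where the depth of $w'$ strictly exceeds that of $w$. Lemma \ref{lem:depthshad} then converts this pointwise depth comparison into (strict) shadow containment.

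For the second assertion I would argue by contradiction. Suppose $\overline{w} \in \mathcal{W}_{3n}^R$ satisfies $w \prec_S \overline{w} \prec_S w'$. Applying Lemma \ref{lem:depthshad} twice, the depth of $\overline{w}$ at every point of the $x$-axis lies between those of $w$ and $w'$; in particular, outside $(i,i+1)$ the depth of $\overline{w}$ equals the common value taken by $w$ and $w'$. Hence the boundary word of $\overline{w}$ agrees with those of $w$ and $w'$ at every position other than $i$ and $i+1$, and the only freedom is the depth $d_M$ that $\overline{w}$ takes in the open interval $(i, i+1)$.

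The heart of the argument is a case analysis using the three cases in Lemma \ref{lem:edge to bdry word}. Let $d$ denote the depth of $w$ in $(i,i+1)$. In Cases 1 and 2 the depth gap is one, so $d_M \in \{d, d+1\}$; either choice forces the boundary word of $\overline{w}$ to coincide with that of $w$ or of $w'$, and by Remark \ref{rem:sl3undet} this yields $\overline{w} = w$ or $\overline{w} = w'$, contradicting the strict inequalities. In Case 3 the gap is two, so $d_M \in \{d, d+1, d+2\}$; the extremal values again force $\overline{w} \in \{w, w'\}$, leaving only the genuinely new possibility $d_M = d+1$. Because both neighboring depths $d_L$ and $d_R$ equal $d+1$ in Case 3, this would force the boundary word of $\overline{w}$ at positions $i, i+1$ to be $00$. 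Replacing the $-+$ in $w$'s word by $00$ changes the symbol counts by $(-1, +2, -1)$ for $(+, 0, -)$, so $\overline{w}$ would fail to be $(+0)$-balanced, contradicting the characterization of reduced $\mathfrak{sl}_3$-web boundary words in Proposition \ref{prop:sl3webword}.

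The main obstacle is Case 3, where the two-unit depth jump a priori permits an intermediate half-plane graph; ruling out that possibility requires the $(+0)$-balanced condition on reduced $\mathfrak{sl}_3$-web boundary words rather than mere depth bookkeeping. Once Case 3 is dispatched, the uniqueness of reduced $\mathfrak{sl}_3$ webs from their boundary words (Remark \ref{rem:sl3undet}) closes out the remaining possibilities cleanly.
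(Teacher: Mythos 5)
Your proposal is correct and follows essentially the same route as the paper: bound the depth of $\overline{w}$ pointwise between those of $w$ and $w'$ via Lemma \ref{lem:depthshad}, observe that only the depth on $(i,i+1)$ is free, and in Case 3 rule out the intermediate value by noting it would force the symbols $00$ at positions $i,i+1$ and hence an unbalanced boundary word, contradicting Proposition \ref{prop:sl3webword}. The only cosmetic difference is that you invoke Remark \ref{rem:sl3undet} explicitly to conclude $\overline{w}\in\{w,w'\}$ in Cases 1 and 2, which the paper leaves implicit.
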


\begin{proof}
Suppose $\overline{w}$ is a reduced $\mathfrak{sl}_3$ web such that $w\preceq_{S}\overline{w}\preceq_Sw'$.  The depth at each point along the boundary of $\overline{w}$ is bounded below by the corresponding depth in $w$ and above by the depth in $w'$ by Lemma \ref{lem:axisdepth}.  

Now apply Lemma \ref{lem:edge to bdry word}. Depths are integral, so in the first two cases, the depth in $\overline{w}$ between $i$ and $i+1$ must match that of $w$ or $w'$.  In the third case, the depth between boundary vertices $i$ and $i+1$ in $\overline{w}$ could be one greater than in $w$ and one less than in $w'$. However, this would mean the boundary word for $\overline{w}$ would be identical to that of $w$ and $w'$ in all positions except positions $i$ and $i+1$ where it would have symbols $00$. This would mean $\overline{w}$ is a reduced web with an unbalanced boundary word, which is not possible by Proposition \ref{prop:sl3webword}. 

Hence in each of the three cases, if $w\preceq_{S}\overline{w}\preceq_Sw'$ then $w = \overline{w}$ or $\overline{w}=w'$. 
\end{proof}

The next result establishes that $\prec_S$ is a refinement of $\prec_T$ and is a direct consequence of the corollary above.

\begin{corollary}\label{cor:total}
The Hasse diagram for $\prec_T$ is a subgraph of the Hasse diagram for $\prec_S$. Therefore any total order that completes the partial order $\prec_S$ also completes  $\prec_T$.
\end{corollary}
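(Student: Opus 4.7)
The proof plan is to unpack Corollary \ref{cor:TvsS} into the precise statement about Hasse diagrams and then observe the total-order claim as a purely order-theoretic consequence. Recall that the Hasse diagram of a partial order has as its edges exactly the covering relations: pairs $x \to y$ with $x \prec y$ and no $z$ satisfying $x \prec z \prec y$. Hence to show the Hasse diagram of $\prec_T$ is a subgraph of the Hasse diagram of $\prec_S$, it suffices to verify that every covering relation of $\prec_T$ is itself a covering relation of $\prec_S$ on the same underlying set of reduced webs.

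Concretely, I would take an arbitrary edge $w \stackrel{s_i}{\rightarrow} w'$ in the Hasse diagram of $\prec_T$. Corollary \ref{cor:TvsS} gives both pieces of the covering condition for $\prec_S$ in one stroke: it asserts $w \prec_S w'$, and it asserts that no reduced web $\overline{w} \in \mathcal{W}_{3n}^R$ satisfies $w \prec_S \overline{w} \prec_S w'$. Thus $w \to w'$ is a covering relation, hence an edge, in the Hasse diagram of $\prec_S$. Running over all edges of the $\prec_T$-Hasse diagram, we obtain the claimed subgraph inclusion.

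For the second sentence, suppose $\leq^*$ is a total order on $\mathcal{W}_{3n}^R$ that extends $\prec_S$, meaning $w \prec_S w'$ implies $w <^* w'$. If $w \prec_T w'$, then by definition of $\prec_T$ there is a directed path $w \to w_1 \to \cdots \to w_k \to w'$ in the $\prec_T$-Hasse diagram. By the subgraph inclusion just established, each of these edges is also an edge in the $\prec_S$-Hasse diagram, so $w \prec_S w_1 \prec_S \cdots \prec_S w'$ by transitivity and in particular $w \prec_S w'$. Then $w <^* w'$ since $\leq^*$ extends $\prec_S$, which is exactly what it means for $\leq^*$ to extend $\prec_T$.

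There is essentially no obstacle here beyond confirming that the hypotheses of Corollary \ref{cor:TvsS} are in place; the main work of the section has already been done in Lemma \ref{lem:edge to bdry word} and Corollary \ref{cor:TvsS}, where the three boundary-word cases were analyzed and the non-existence of an intermediate web $\overline{w}$ was verified. The present corollary is then a two-step formal argument: covering relations of $\prec_T$ are covering relations of $\prec_S$, and any linear extension of a refinement is automatically a linear extension of the coarser order.
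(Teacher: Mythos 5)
Your proposal is correct and matches the paper's intent exactly: the paper presents this corollary as a direct consequence of Corollary \ref{cor:TvsS}, and your argument simply makes explicit the two formal steps (covering relations of $\prec_T$ are covering relations of $\prec_S$ by Corollary \ref{cor:TvsS}, and a linear extension of the finer order extends the coarser one via paths in the Hasse diagram). No gaps.
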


Figure \ref{fig:sl3Counter} exhibits two tableaux $T, \widetilde{T}\in SYT(6,6,6)$ and their corresponding band diagrams on 21 boundary points. The associated webs $\widetilde{w}$ and $w$ are related by $\widetilde{w}\prec_S w$. However $r(\widetilde{w}) > r(w)$ which implies by Lemma \ref{lem:sl3rank} that $\widetilde{w}\not\prec_T w$. This leads us to conclude the following.

\begin{theorem}\label{thm:notsamesl3}
The partial orders $\prec_S$ and $\prec_T$ on reduced $\mathfrak{sl}_3$ webs are different. In fact $\prec_S$ is not ranked.
\end{theorem}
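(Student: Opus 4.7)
The plan is to prove both assertions simultaneously by exhibiting a single explicit pair of reduced $\mathfrak{sl}_3$ webs $\widetilde w, w$ with $\widetilde w \prec_S w$ but $r(\widetilde w) > r(w)$. Such a pair immediately separates the two partial orders: since $\prec_T$ is ranked with rank function $r$ (Lemma \ref{lem:sl3rank}), any $\prec_T$-comparability would force $r$ to increase strictly, so $\widetilde w \not\prec_T w$ while $\widetilde w \prec_S w$.

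My first step would be to display a candidate pair of tableaux $\widetilde T, T$ of suitably large rectangular three-row shape along with their associated band diagrams, as in Figure \ref{fig:sl3Counter}. To verify $\widetilde w \prec_S w$, I would invoke Lemma \ref{lem:depthshad}: this reduces shadow containment to a pointwise depth inequality along the $x$-axis, so I would simply read off the sequence of boundary-face depths from each band diagram (equivalently, count the number of band-diagram arcs above each point) and confirm the termwise comparison. To compare ranks, I would use the band-diagram rank formula of Lemma \ref{lem:sl3rank}, namely $r(B) = n(B) + d(B) - n$, tallying nesting numbers of arcs and dot-depths of unmatched boundary vertices in each of $\widetilde w$ and $w$, and confirm numerically that $r(\widetilde w) > r(w)$.

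To upgrade to the stronger statement that $\prec_S$ is not ranked, I would exploit Corollary \ref{cor:total}: every cover in the Hasse diagram of $\prec_T$ is a cover in the Hasse diagram of $\prec_S$. Suppose for contradiction that $\rho$ were a rank function on $\prec_S$. Because $\prec_T$ has a unique minimum element on webs with fixed boundary (the column-filled tableau, by the remark on rectangular partitions in Section \ref{sec:taborder}), and a saturated $\prec_T$-chain from that minimum to any $u$ has length $r(u)$, the corollary forces those chains to be saturated $\prec_S$-chains as well. Consequently $\rho(u)$ and $r(u)$ agree up to a global additive constant, and hence $\rho(w) - \rho(\widetilde w) = r(w) - r(\widetilde w) < 0$, contradicting $\widetilde w \prec_S w$.

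The main obstacle I anticipate is the combinatorial search for the witnessing pair: both shadow containment and the $\prec_T$-rank function measure ``nestedness,'' and in fact coincide in small cases, so a genuine divergence emerges only once $n$ is large enough. My heuristic for producing the pair would be to arrange $\widetilde w$ to have many arcs of moderate nesting and several unmatched boundary vertices buried deep beneath its band diagram (inflating $n(\widetilde w) + d(\widetilde w)$), while $w$ has a small number of very wide but shallow outer arcs that engulf the entire shadow of $\widetilde w$ yet contribute only modestly to $r(w)$. The configuration suggested by Figure \ref{fig:sl3Counter} is exactly of this form, so the remaining work is the bookkeeping described above.
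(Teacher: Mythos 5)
Your proposal is correct and follows essentially the same route as the paper: exhibit the explicit pair from Figure \ref{fig:sl3Counter} with $\widetilde{w}\prec_S w$ but $r(\widetilde{w})>r(w)$, conclude $\widetilde{w}\not\prec_T w$ since $\prec_T$ is ranked by $r$, and derive non-rankedness of $\prec_S$ from Corollary \ref{cor:total} by noting that any rank function for $\prec_S$ would have to agree with $r$ up to an additive constant. Your saturated-chain argument from the unique minimal element just makes explicit the step the paper states as ``if $\prec_S$ were ranked, then it would have the same rank function as $\prec_T$.''
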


\begin{figure}[h]
\scalebox{.85}{\begin{tabular}{|l|l|}
\hline
& \\ 
$\widetilde{T} = \raisebox{-13pt}{\young(12345\fourteen,6789\ten\fifteen,\eleven\twelve\thirteen\sixteen\seventeen\eighteen)}$&$T=\raisebox{-13pt}{\young(123459,678\thirteen\fourteen\fifteen,\ten\eleven\twelve\sixteen\seventeen\eighteen)}$ \\
& \\ \hline
& \\

 $\widetilde{B} = \raisebox{-10pt}{\begin{tikzpicture}[scale=.25]
\draw[style=thick] (0,0)--(19,0);
\draw[black, fill](1,0)circle(3pt);
\draw[black, fill](2,0)circle(3pt);
\draw[black, fill](3,0)circle(3pt);
\draw[black, fill](4,0)circle(3pt);
\draw[black, fill](5,0)circle(3pt);
\draw[black, fill](6,0)circle(5pt);
\draw[black, fill](7,0)circle(5pt);
\draw[black, fill](8,0)circle(5pt);
\draw[black, fill](9,0)circle(5pt);
\draw[black, fill](10,0)circle(5pt);
\draw[black, fill](11,0)circle(3pt);
\draw[black, fill](12,0)circle(3pt);
\draw[black, fill](13,0)circle(3pt);
\draw[black, fill](14,0)circle(3pt);
\draw[black, fill](15,0)circle(5pt);
\draw[black, fill](16,0)circle(3pt);
\draw[black, fill](17,0)circle(3pt);
\draw[black, fill](18,0)circle(3pt);

\draw[style=thick] (1,0) to[out=90,in=180] (9.5,4.5) to[out=0,in=90] (18,0);
\draw[style=thick] (2,0) to[out=90,in=180] (9.5,4) to[out=0,in=90] (17,0);
\draw[style=thick] (3,0) to[out=90,in=180] (8,3) to[out=0,in=90] (13,0);
\draw[style=thick] (4,0) to[out=90,in=180] (8,2.5) to[out=0,in=90] (12,0);
\draw[style=thick] (5,0) to[out=90,in=180] (8,2) to[out=0,in=90] (11,0);
\draw[style=thick] (14,0) to[out=90,in=180] (15,1) to[out=0,in=90] (16,0);
\end{tikzpicture}}$  & $B = \raisebox{-10pt}{\begin{tikzpicture}[scale=.25]
\draw[style=thick] (0,0)--(19,0);
\draw[black, fill](1,0)circle(3pt);
\draw[black, fill](2,0)circle(3pt);
\draw[black, fill](3,0)circle(3pt);
\draw[black, fill](4,0)circle(3pt);
\draw[black, fill](5,0)circle(3pt);
\draw[black, fill](6,0)circle(5pt);
\draw[black, fill](7,0)circle(5pt);
\draw[black, fill](8,0)circle(5pt);
\draw[black, fill](9,0)circle(3pt);
\draw[black, fill](10,0)circle(3pt);
\draw[black, fill](11,0)circle(3pt);
\draw[black, fill](12,0)circle(3pt);
\draw[black, fill](13,0)circle(5pt);
\draw[black, fill](14,0)circle(5pt);
\draw[black, fill](15,0)circle(5pt);
\draw[black, fill](16,0)circle(3pt);
\draw[black, fill](17,0)circle(3pt);
\draw[black, fill](18,0)circle(3pt);

\draw[style=thick] (1,0) to[out=90,in=180] (9.5,4.5) to[out=0,in=90] (18,0);
\draw[style=thick] (2,0) to[out=90,in=180] (9.5,4) to[out=0,in=90] (17,0);
\draw[style=thick] (3,0) to[out=90,in=180] (9.5,3.5) to[out=0,in=90] (16,0);
\draw[style=thick] (4,0) to[out=90,in=180] (8,2.5) to[out=0,in=90] (12,0);
\draw[style=thick] (5,0) to[out=90,in=180] (8,2) to[out=0,in=90] (11,0);
\draw[style=thick] (9,0) to[out=90,in=180] (9.5,.5) to[out=0,in=90] (10,0);
\end{tikzpicture}}$ \\

& \\ \hline
& \\
$S_1(\widetilde{w}) = [1,21]$ & $S_1(w) = [1,21]$ \\
$S_2(\widetilde{w}) = [2,20]$ & $S_2(w) = [2,20]$ \\
$S_3(\widetilde{w}) = ([3,13]\cup [14,19])$ & $S_3(w) = [3,19]$ \\
$S_4(\widetilde{w}) =([4,12]\cup[15,18])$ & $S_4(w) = [4,12]\cup [15,18]$ \\
$S_5(\widetilde{w}) = [5,11]$ & $S_5(w)=[5,11]$ \\
$S_6(\widetilde{w}) = \emptyset$ & $S_6(w) = [9,10]$ \\
& \\ \hline
t
& \\

$r(\widetilde{w}) = 41$ & $r(w) = 40$ \\
& \\ \hline 
\end{tabular}}
\caption{Data for webs $\widetilde{w}$ and $w$ with $\widetilde{w}\prec_S w$ and $r(\widetilde{w})>r(w)$}\label{fig:sl3Counter}
\end{figure}

\section{The transition matrix from Specht basis to $\mathfrak{sl}_3$ web basis is unitriangular with respect to the shadow partial order} \label{sec:trans}

When $k\geq 2$ the space of $\mathfrak{sl}_k$ webs is equipped with an action of the symmetric group. Previous work by the authors and others focuses primarily on studying properties of the $\mathfrak{sl}_2$ instance of this action along with some preliminary investigation into the $\mathfrak{sl}_3$ case \cite{housley2015robinson, PPR, MR3591372, MR3998730,   MR2801314, MR3920353} . One motivation for studying band diagrams is as a combinatorial tool for characterizing the behavior of $\mathfrak{sl}_3$ webs under the action of the symmetric group. In particular, given $w\in \mathcal{W}_{3n}^R$ and a simple transposition $s_i\in S_{3n}$ we wish to write $s_i \cdot w$ as a linear combination of reduced webs. 

We begin this section with a description of the symmetric group action on $\mathfrak{sl}_2$ and $\mathfrak{sl}_3$ webs.  We then prove that the transition matrix from Specht basis to web basis is unitriangular with respect to the shadow partial order.

Given a simple transposition $s_i$ and an $\mathfrak{sl}_2$ or $\mathfrak{sl}_3$ bottom web $w$, define $s_i \cdot w$ to be the sum of webs obtained by appending a crossing to the $i$ and $(i+1)^{st}$ endpoints of $w$ and then smoothing according to Figure \ref{fig:smoothings}. 
(This skein-theoretic action comes from considering $s_i \cdot w$ as an element of $V^{\mathcal{W}_{2n}}$ or $V^{\mathcal{W}_{3n}}$ and then expressing it in terms of the reduced web basis by applying web reduction rules.)

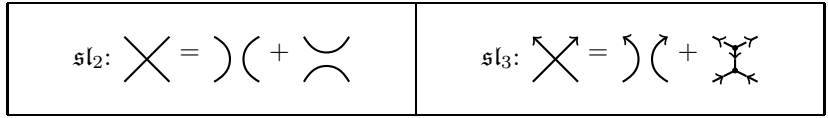
\begin{figure}
\begin{tabular}{|c|c|}
\hline 
\hspace{2in} & \hspace{2in} \\
{\bf $\mathfrak{sl}_2$}:
\raisebox{2pt}{\begin{tikzpicture}[baseline=0cm, scale=0.3]

\draw[style=thick] (-1,-1)--(1,1);
\draw[style=thick] (1,-1)--(-1,1);

\end{tikzpicture}
$= $\,
\begin{tikzpicture}[baseline=0cm, scale=0.3]
 
\draw[style=thick] (-1,-1) to[out=30,in=270] (-.3,0) to[out=90,in=-30](-1,1);
\draw[style=thick] (1,-1) to[out=150,in=270] (.3,0) to[out=90,in=210](1,1);

\end{tikzpicture}
$+$\,
\begin{tikzpicture}[baseline=0cm, scale=0.3]

\draw[style=thick] (-1,-1) to[out=60,in=180] (0,-.3) to[out=0,in=120](1,-1);
\draw[style=thick] (-1,1) to[out=-60,in=180] (0, .3) to[out=0,in=240](1,1);

\end{tikzpicture}}
&
{\bf $\mathfrak{sl}_3$}:
\raisebox{2pt}{\begin{tikzpicture}[baseline=0cm, scale=0.3]

\draw[style=thick,->] (-1,-1)--(1,1);
\draw[style=thick, ->] (1,-1)--(-1,1);

\end{tikzpicture}
$= $\,
\begin{tikzpicture}[baseline=0cm, scale=0.3]

\draw[style=thick, ->] (-1,-1) to[out=30,in=270] (-.3,0) to[out=90,in=-30](-1,1);
\draw[style=thick, ->] (1,-1) to[out=150,in=270] (.3,0) to[out=90,in=210](1,1);

\end{tikzpicture} 
$+$\,
\begin{tikzpicture}[baseline=0cm, scale=0.3]

\draw[style=thick,->] (-1,-1)--(-.5,-.75);
\draw[style=thick] (-.5,-.75)--(0,-.5);
\draw[style=thick,->] (1,-1)--(.5,-.75);
\draw[style=thick] (.5,-.75)--(0,-.5);
\draw[style=thick,-<] (-1,1)--(-.5,.75);
\draw[style=thick] (-.5,.75)--(0,.5);
\draw[style=thick,-<] (1,1)--(.5,.75);
\draw[style=thick, ->] (0,.5)--(0,0);
\draw[style=thick] (0,0)--(0,-.5);
\draw[style=thick] (.5,.75)--(0,.5);
\draw[fill=black] (0,.5)circle(3pt);
\draw[fill=black] (0,-.5)circle(3pt);
\end{tikzpicture}} \\
& \\ \hline
\end{tabular}
\caption{Smoothing rules for symmetric group action on webs}\label{fig:smoothings}
\end{figure}

Figure \ref{fig:exact} shows examples of the symmetric group acting on both $\mathfrak{sl}_2$ and $\mathfrak{sl}_3$ webs.  In the $\mathfrak{sl}_2$ case the web $s_i\cdot w$ has at most two reduced summands. If $i$ and $i+1$ are connected in $w$ then $s_i\cdot w = -w$. If $i$ and $i+1$ are not connected in $w$ then $w$ has arcs $(i,j)$ and $(i+1,k)$ in some configuration, either nested or unnested. In this case $s_i \cdot w = w + w'$ where $w'$ is identical to $w$ off of the vertices $i, i+1, j$, and $k$ and $w'$ has arcs $(j,k)$ and $(i,i+1)$. This action on $\mathfrak{sl}_2$ webs leads to another description of the edges in the Hasse diagram for $\prec_T$. In particular, $w\stackrel{s_i}{\longrightarrow} w'$ is an edge if and only if $i$ is below $i+1$ in $w$ and $s_i\cdot w = w+w'$. 

In the $\mathfrak{sl}_3$ case, the action of the symmetric group is not easily characterized. For instance, given a simple transposition $s_i$ and a reduced $\mathfrak{sl}_3$ web $w$, the upper bound  on the number of reduced summands in the web $s_i\cdot w$ is a {\em nonconstant} function of the number of boundary vertices. 

\begin{figure}[h]
\begin{eqnarray*}
s_2 \cdot \raisebox{-2pt}{\begin{tikzpicture}[ scale=0.25]
\draw[style=dashed, <->] (-3.5,1)--(3.5,1);
\draw[style=thick] (-3,1) to[out=90, in=180] (0,4) to[out=0, in=90] (3,1);
\draw[style=thick] (-2,1) to[out=90, in=180] (0,3) to[out=0, in=90] (2,1);
\draw[style=thick] (-1,1) to[out=90, in=180] (0,2) to[out=0, in=90] (1,1);
\end{tikzpicture}} \hspace{.25in} &=& 
\raisebox{-4pt}{\begin{tikzpicture}[ scale=0.25]
\draw[style=dashed, <->] (-3.5,0)--(3.5,0);
\draw[style=thick] (-3,1) to[out=90, in=180] (0,4) to[out=0, in=90] (3,1);
\draw[style=thick] (-2,1) to[out=90, in=180] (0,3) to[out=0, in=90] (2,1);
\draw[style=thick] (-1,1) to[out=90, in=180] (0,2) to[out=0, in=90] (1,1);
\draw[style=thick] (-3,1)--(-3,0);
\draw[style=thick] (-2,1)--(-1,0);
\draw[style=thick](-1,1)--(-2,0);
\draw[style=thick] (3,1)--(3,0);
\draw[style=thick] (2,1)--(2,0);
\draw[style=thick] (1,1)--(1,0);
\end{tikzpicture}}\\
&=&
\raisebox{-4pt}{\begin{tikzpicture}[ scale=0.25]
\draw[style=dashed, <->] (-3.5,1)--(3.5,1);
\draw[style=thick] (-3,1) to[out=90, in=180] (0,4) to[out=0, in=90] (3,1);
\draw[style=thick] (-2,1) to[out=90, in=180] (0,3) to[out=0, in=90] (2,1);
\draw[style=thick] (-1,1) to[out=90, in=180] (0,2) to[out=0, in=90] (1,1);
\end{tikzpicture}} +
\raisebox{-4pt}{\begin{tikzpicture}[ scale=0.25]
\draw[style=dashed, <->] (-3.5,1)--(3.5,1);
\draw[style=thick] (-3,1) to[out=90, in=180] (0,4) to[out=0, in=90] (3,1);
\draw[style=thick] (-2,1) to[out=90, in=180] (-1.125,1.875) to[out=0, in=90] (-.25,1);
\draw[style=thick] (.25,1) to[out=90, in=180] (1.125,1.875) to[out=0, in=90] (2,1);
\end{tikzpicture}}  \\
s_2 \cdot \begin{tikzpicture}[baseline=0cm, scale=0.4]
\draw[style=dashed, <->] (2.5,0)--(8.5,0);
\draw[radius=.08, fill=black](3,0)circle;
\draw[radius=.08, fill=black](4,0)circle;
\draw[radius=.08, fill=black](5,0)circle;
\draw[radius=.08, fill=black](6,0)circle;
\draw[radius=.08, fill=black](7,0)circle;
\draw[radius=.08, fill=black](8,0)circle;
\draw[style=thick,->](4, 0) -- (4,.5);
\draw[style=thick](4,.5)--(4,1);
\draw[radius=.08, fill=black](4,1)circle;
\draw[style=thick,->](3,0)--(3.5,.5);
\draw[style=thick](3.5,.5)--(4,1);
\draw[style=thick,->](7, 0) -- (7,.5);
\draw[style=thick](7,.5)--(7,1);
\draw[radius=.08, fill=black](7,1)circle;
\draw[style=thick,->](8,0)--(7.5,.5);
\draw[style=thick](7.5,.5)--(7,1);
\draw[radius=.08, fill=black](5.5,1)circle;
\draw[style=thick,-<](5.5,1)--(5.5,1.5);
\draw[style=thick](5.5,1.5)--(5.5,2);
\draw[radius=.08, fill=black](5.5,2)circle;
\draw[style=thick,->](5,0)--(5.25,.5);
\draw[style=thick](5.25,.5)--(5.5,1);
\draw[style=thick,->](6,0)--(5.75,.5);
\draw[style=thick](5.75,.5)--(5.5,1);
\draw[style=thick,-<](4,1)--(4.75,1.5);
\draw[style=thick](4.75,1.5)--(5.5,2);
\draw[style=thick,-<](7,1)--(6.25,1.5);
\draw[style=thick](6.25,1.5)--(5.5,2);
\end{tikzpicture}
&=&
\begin{tikzpicture}[baseline=0cm, scale=0.4]
\draw[style=dashed, <->] (2.5,0)--(8.5,0);
\draw[radius=.08, fill=black](3,0)circle;
\draw[radius=.08, fill=black](4,0)circle;
\draw[radius=.08, fill=black](5,0)circle;
\draw[radius=.08, fill=black](6,0)circle;
\draw[radius=.08, fill=black](7,0)circle;
\draw[radius=.08, fill=black](8,0)circle;
\draw[style=thick,->](4, 0) -- (5.125,.75);
\draw[style=thick](5.125,.75)--(5.5,1);
\draw[radius=.08, fill=black](4,1)circle;
\draw[style=thick,->](3,0)--(3.5,.5);
\draw[style=thick](3.5,.5)--(4,1);
\draw[style=thick,->](7, 0) -- (7,.5);
\draw[style=thick](7,.5)--(7,1);
\draw[radius=.08, fill=black](7,1)circle;
\draw[style=thick,->](8,0)--(7.5,.5);
\draw[style=thick](7.5,.5)--(7,1);
\draw[radius=.08, fill=black](5.5,1)circle;
\draw[style=thick,-<](5.5,1)--(5.5,1.5);
\draw[style=thick](5.5,1.5)--(5.5,2);
\draw[radius=.08, fill=black](5.5,2)circle;
\draw[style=thick,->](5,0)--(4.75,.25);
\draw[style=thick](4.75,.25)--(4,1);
\draw[style=thick,->](6,0)--(5.75,.5);
\draw[style=thick](5.75,.5)--(5.5,1);
\draw[style=thick,-<](4,1)--(4.75,1.5);
\draw[style=thick](4.75,1.5)--(5.5,2);
\draw[style=thick,-<](7,1)--(6.25,1.5);
\draw[style=thick](6.25,1.5)--(5.5,2);
\end{tikzpicture} \\
&=&
\begin{tikzpicture}[baseline=0cm, scale=0.4]
\draw[style=dashed, <->] (2.5,0)--(8.5,0);
\draw[radius=.08, fill=black](3,0)circle;
\draw[radius=.08, fill=black](4,0)circle;
\draw[radius=.08, fill=black](5,0)circle;
\draw[radius=.08, fill=black](6,0)circle;
\draw[radius=.08, fill=black](7,0)circle;
\draw[radius=.08, fill=black](8,0)circle;
\draw[style=thick,->](4, 0) -- (4,.5);
\draw[style=thick](4,.5)--(4,1);
\draw[radius=.08, fill=black](4,1)circle;
\draw[style=thick,->](3,0)--(3.5,.5);
\draw[style=thick](3.5,.5)--(4,1);
\draw[style=thick,->](7, 0) -- (7,.5);
\draw[style=thick](7,.5)--(7,1);
\draw[radius=.08, fill=black](7,1)circle;
\draw[style=thick,->](8,0)--(7.5,.5);
\draw[style=thick](7.5,.5)--(7,1);
\draw[radius=.08, fill=black](5.5,1)circle;
\draw[style=thick,-<](5.5,1)--(5.5,1.5);
\draw[style=thick](5.5,1.5)--(5.5,2);
\draw[radius=.08, fill=black](5.5,2)circle;
\draw[style=thick,->](5,0)--(5.25,.5);
\draw[style=thick](5.25,.5)--(5.5,1);
\draw[style=thick,->](6,0)--(5.75,.5);
\draw[style=thick](5.75,.5)--(5.5,1);
\draw[style=thick,-<](4,1)--(4.75,1.5);
\draw[style=thick](4.75,1.5)--(5.5,2);
\draw[style=thick,-<](7,1)--(6.25,1.5);
\draw[style=thick](6.25,1.5)--(5.5,2);
\end{tikzpicture}
+
\begin{tikzpicture}[baseline=0cm, scale=0.4]
\draw[style=dashed, <->] (2.5,0)--(8.5,0);
\draw[radius=.08, fill=black](3,0)circle;
\draw[radius=.08, fill=black](4,0)circle;
\draw[radius=.08, fill=black](5,0)circle;
\draw[radius=.08, fill=black](6,0)circle;
\draw[radius=.08, fill=black](7,0)circle;
\draw[radius=.08, fill=black](8,0)circle;
\draw[radius=.08, fill=black](4,1)circle;
\draw[radius=.08, fill=black](4.5,.25)circle;
\draw[radius=.08, fill=black](4.5,.5)circle;
\draw[style=thick,->](3,0)--(3.5,.5);
\draw[style=thick](3.5,.5)--(4,1);
\draw[style=thick,->](7, 0) -- (7,.5);
\draw[style=thick](7,.5)--(7,1);
\draw[radius=.08, fill=black](7,1)circle;
\draw[style=thick,->](8,0)--(7.5,.5);
\draw[style=thick](7.5,.5)--(7,1);
\draw[radius=.08, fill=black](5.5,1)circle;
\draw[style=thick,-<](5.5,1)--(5.5,1.5);
\draw[style=thick](5.5,1.5)--(5.5,2);
\draw[radius=.08, fill=black](5.5,2)circle;

\draw[style=thick,->](6,0)--(5.75,.5);
\draw[style=thick](5.75,.5)--(5.5,1);
\draw[style=thick,-<](4,1)--(4.75,1.5);
\draw[style=thick](4.75,1.5)--(5.5,2);
\draw[style=thick,-<](7,1)--(6.25,1.5);
\draw[style=thick](6.25,1.5)--(5.5,2);
 \begin{scope}[thick,decoration={
                                             markings,
                                             mark=at position 0.5 with {\arrow{>}}}]
                              \draw[postaction={decorate}] (4,0)--(4.5,.25);
                              \draw[postaction={decorate}] (5,0)--(4.5,.25);
                              \draw[postaction={decorate}] (4.5,.5)--(4.5,.25);
                              \draw[postaction={decorate}] (4.5,.5)--(4,1);
                               \draw[postaction={decorate}] (4.5,.5)--(5.5,1);
                              \end{scope}
\end{tikzpicture} \\
&=& 
\begin{tikzpicture}[baseline=0cm, scale=0.4]
\draw[style=dashed, <->] (2.5,0)--(8.5,0);
\draw[radius=.08, fill=black](3,0)circle;
\draw[radius=.08, fill=black](4,0)circle;
\draw[radius=.08, fill=black](5,0)circle;
\draw[radius=.08, fill=black](6,0)circle;
\draw[radius=.08, fill=black](7,0)circle;
\draw[radius=.08, fill=black](8,0)circle;
\draw[style=thick,->](4, 0) -- (4,.5);
\draw[style=thick](4,.5)--(4,1);
\draw[radius=.08, fill=black](4,1)circle;
\draw[style=thick,->](3,0)--(3.5,.5);
\draw[style=thick](3.5,.5)--(4,1);
\draw[style=thick,->](7, 0) -- (7,.5);
\draw[style=thick](7,.5)--(7,1);
\draw[radius=.08, fill=black](7,1)circle;
\draw[style=thick,->](8,0)--(7.5,.5);
\draw[style=thick](7.5,.5)--(7,1);
\draw[radius=.08, fill=black](5.5,1)circle;
\draw[style=thick,-<](5.5,1)--(5.5,1.5);
\draw[style=thick](5.5,1.5)--(5.5,2);
\draw[radius=.08, fill=black](5.5,2)circle;
\draw[style=thick,->](5,0)--(5.25,.5);
\draw[style=thick](5.25,.5)--(5.5,1);
\draw[style=thick,->](6,0)--(5.75,.5);
\draw[style=thick](5.75,.5)--(5.5,1);
\draw[style=thick,-<](4,1)--(4.75,1.5);
\draw[style=thick](4.75,1.5)--(5.5,2);
\draw[style=thick,-<](7,1)--(6.25,1.5);
\draw[style=thick](6.25,1.5)--(5.5,2);
\end{tikzpicture}
+
\begin{tikzpicture}[scale=.4]
                              \draw[style=dashed, <->] (.5,0)--(6.5,0);
                              \draw[radius=.08, fill=black](1,0)circle;
                              \draw[radius=.08, fill=black](2,0)circle;
                             
                              \draw[radius=.08, fill=black](3,0)circle;
                              \draw[radius=.08, fill=black](4,0)circle;
                              \draw[radius=.08, fill=black](5,0)circle;
                             
                              \draw[radius=.08, fill=black](6,0)circle;
                              \draw[radius=.08, fill=black](2,1)circle;
                              \draw[radius=.08, fill=black](5,1)circle;
                             
                              \begin{scope}[thick,decoration={
                                             markings,
                                             mark=at position 0.5 with {\arrow{>}}}
                              ]
                              \draw[postaction={decorate}] (1,0)--(2,1);
                              \draw[postaction={decorate}] (3,0)--(2,1);
                              \draw[postaction={decorate}] (2,0)--(2,1);
                              \draw[postaction={decorate}] (4,0)--(5,1);
                              \draw[postaction={decorate}] (5,0)--(5,1);
                              \draw[postaction={decorate}] (6,0)--(5,1);
                              \end{scope}
                              \end{tikzpicture}
                              + \begin{tikzpicture}[scale=.4]
                              \draw[style=dashed, <->] (.5,0)--(6.5,0);
                              \draw[radius=.08, fill=black](1,0)circle;
                              \draw[radius=.08, fill=black](2,0)circle;
                             
                              \draw[radius=.08, fill=black](3,0)circle;
                              \draw[radius=.08, fill=black](4,0)circle;
                              \draw[radius=.08, fill=black](5,0)circle;
                             
                              \draw[radius=.08, fill=black](6,0)circle;
                              \draw[radius=.08, fill=black](3,.75)circle;
                              \draw[radius=.08, fill=black](4,1.5)circle;
                             
                              \begin{scope}[thick,decoration={
                                             markings,
                                             mark=at position 0.5 with {\arrow{>}}}
                              ]
                              \draw[postaction={decorate}] (1,0)--(4,1.5);
                              \draw[postaction={decorate}] (3,0)--(3,.75);
                              \draw[postaction={decorate}] (2,0)--(3,.75);
                              \draw[postaction={decorate}] (4,0)--(3,.75);
                              \draw[postaction={decorate}] (5,0)--(4,1.5);
                              \draw[postaction={decorate}] (6,0)--(4,1.5);
                              \end{scope}
                              \end{tikzpicture}
\end{eqnarray*}
\caption{An example of the symmetric group action on $\mathfrak{sl}_2$ and $\mathfrak{sl}_3$ webs.}\label{fig:exact}
\end{figure}
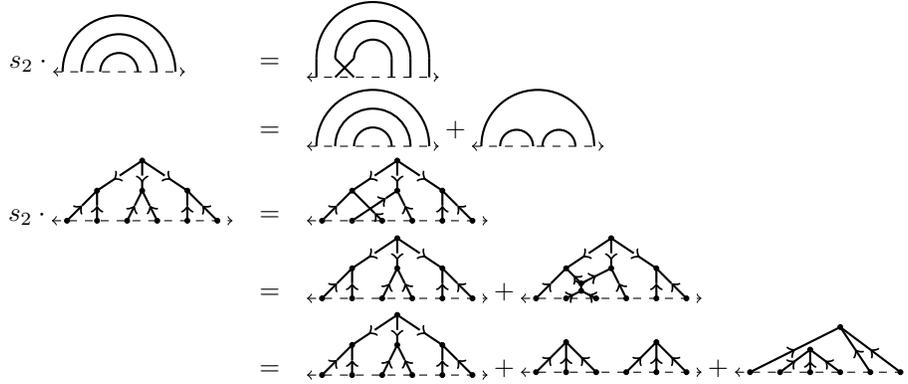

Recall that the complex, finite-dimensional representations of the symmetric group are parameterized by partitions. The following (classical) result characterizes web space as a symmetric group representation \cite{PPR, MR2801314}. In fact, Schur-Weyl duality can be used to obtain a generalized version of this result for $\mathfrak{sl}_k$ webs when $k\geq 2$ \cite{FKK,MR1321638, MR841713, MR0000255}.

\begin{theorem}
The symmetric group representations $V^{\mathcal{W}_{2n}}$ and $V^{\mathcal{W}_{3n}}$ described above are irreducible and correspond to the partitions $(n,n)$ and $(n,n,n)$ respectively.
\end{theorem}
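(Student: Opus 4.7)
My plan is to prove this via Schur--Weyl duality. First I would identify $V^{\mathcal{W}_{kn}}$ as a vector space with the $\mathfrak{sl}_k$-invariant subspace $\mathrm{Inv}_{\mathfrak{sl}_k}(V^{\otimes kn})$, where $V$ is the $k$-dimensional fundamental representation of $\mathfrak{sl}_k$ and $k\in\{2,3\}$. This is the foundational theorem (Kuperberg \cite{MR1403861} for $\mathfrak{sl}_3$, classical Temperley--Lieb theory for $\mathfrak{sl}_2$) that at $q=1$ sends a bottom web to the $\mathfrak{sl}_k$-intertwiner $\mathbb{C}\to V^{\otimes kn}$ it encodes via the diagrammatic calculus. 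The dimension check is already built into the paper's setup: the bijection $\psi$ between $\mathcal{W}_{kn}^R$ and rectangular standard tableaux from Section \ref{sec:webtab} gives $\dim V^{\mathcal{W}_{kn}} = \dim S^{(n,\ldots,n)}$.

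Next I would check that the skein-theoretic $S_{kn}$-action defined by Figure \ref{fig:smoothings} corresponds, under this isomorphism, to the natural permutation action of $S_{kn}$ on the tensor factors of $V^{\otimes kn}$. At $q=1$ the $R$-matrix of $U_q(\mathfrak{sl}_k)$ specializes to the tensor swap, and the right-hand sides of the smoothing rules in Figure \ref{fig:smoothings} are exactly the expansion of this swap in the two-dimensional space of $\mathfrak{sl}_k$-invariant endomorphisms of $V\otimes V$ (spanned by the identity and the cup--cap diagram for $\mathfrak{sl}_2$, and by the identity and the trivalent Y-diagram for $\mathfrak{sl}_3$; two-dimensionality matches $V\otimes V\cong \mathrm{Sym}^2 V\oplus\Lambda^2 V$). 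Internal consistency, meaning $s_i^2=\mathrm{id}$ and the braid relations, then follows from the web relations; for instance the circle-removal rule of Figure \ref{fig:circlesl2} forces $(\mathrm{cup}\circ\mathrm{cap})^2=-2\,\mathrm{cup}\circ\mathrm{cap}$, which yields $s_i^2=\mathrm{id}$ in the $\mathfrak{sl}_2$ case.

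With those identifications in hand I would invoke Schur--Weyl duality: as an $(\mathfrak{sl}_k,S_m)$-bimodule,
$$V^{\otimes m}\;\cong\;\bigoplus_{\lambda\vdash m,\;\ell(\lambda)\leq k} V_\lambda\otimes S^\lambda,$$
where $V_\lambda$ is the irreducible $\mathfrak{sl}_k$-module indexed by $\lambda$ and $S^\lambda$ is the Specht module. Taking $\mathfrak{sl}_k$-invariants keeps only summands with $V_\lambda$ trivial; for $\mathfrak{sl}_k$ this happens precisely when $\lambda$ is a $k$-row rectangle, and for $m=kn$ this forces $\lambda=(n,n,\ldots,n)$. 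Hence $\mathrm{Inv}_{\mathfrak{sl}_k}(V^{\otimes kn})\cong S^{(n,\ldots,n)}$ as $S_{kn}$-modules, and irreducibility of the Specht module then yields both claims of the theorem.

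The main obstacle is the diagram-to-algebra matching in the second paragraph: one must verify that the skein rules of Figure \ref{fig:smoothings} genuinely realize the swap action on tensor factors, rather than some twisted or incorrectly normalized variant. For $\mathfrak{sl}_2$ this is a textbook Temperley--Lieb computation, but for $\mathfrak{sl}_3$ it requires explicitly computing the $q\to 1$ limit of the $R$-matrix on $V\otimes V$ and matching coefficients in the two-dimensional space of invariant endomorphisms; everything downstream is then a direct application of Schur--Weyl duality.
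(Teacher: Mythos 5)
Your proposal is correct, and it follows precisely the route the paper itself points to: the paper does not prove this theorem but states it as a classical result (citing Petersen--Pylyavskyy--Rhoades and Russell, and remarking that ``Schur-Weyl duality can be used to obtain a generalized version of this result''), so your Schur--Weyl argument --- identifying web space with $\mathrm{Inv}_{\mathfrak{sl}_k}(V^{\otimes kn})$, matching the skein rules of Figure \ref{fig:smoothings} with the tensor swap (with the consistency check $s_i^2=\mathrm{id}$ coming from the circle and bigon relations), and noting that the only $k$-row rectangle partitioning $kn$ is $(n,\ldots,n)$ --- is the standard proof being referenced. The one step you rightly flag as nontrivial, the diagram-to-algebra matching for $\mathfrak{sl}_3$, does work out as you describe: the double-Y morphism $Y$ satisfies $Y^2=-2Y$ by the bigon relation, so $-\tfrac{1}{2}Y$ is the projector onto $\Lambda^2 V$ and $\mathrm{id}+Y$ is exactly the swap.
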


Specht modules are a classic combinatorial construction of irreducible symmetric group representations \cite{MR1545531}. The $(n,n)$ (respectively $(n,n,n)$) Specht module, which we will denote by $V^{\mathcal{T}_{2n}}$ (respectively $V^{\mathcal{T}_{3n}}$), has a basis indexed by standard Young tableaux of shape $(n,n)$ (respectively of shape $(n,n,n)$) \cite{MR0439548}. Since the reduced web bases are also indexed by standard Young tableaux, it is natural to compare the Specht and web bases.   In particular, we wish to describe the unique (up to scaling), nontrivial $S_{2n}$- or $S_{3n}$-equivariant isomorphism $\phi$ between them. 

Given a Young tableau $T$, we write its corresponding Specht vector as $v_T$. We will not give a full description of the action of the symmetric group on the Specht basis. (An interested reader can find such an exposition in \cite{MR1464693}.) However, the following property of the Specht module is important in our discussion below. 

\begin{proposition}\label{prop:tabact}
Let $T\stackrel{s_i}{\longrightarrow} T'$ be an edge in the Hasse diagram for $\prec_T$. Then $s_i\cdot v_T = v_{T'}$. 
\end{proposition}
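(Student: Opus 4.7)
The plan is to invoke the classical polytabloid construction of the Specht module. For each tableau $T$ of shape $\lambda$, the corresponding Specht vector can be realized as the polytabloid
\[
v_T \;=\; e_T \;=\; \sum_{\sigma \in C_T} \operatorname{sgn}(\sigma)\, \sigma \cdot \{T\},
\]
where $\{T\}$ denotes the row-tabloid of $T$ and $C_T$ is the column stabilizer of $T$; the vectors $\{v_T : T \in \mathrm{SYT}(\lambda)\}$ form the Specht basis. My strategy is to first establish the more general identity $s_i \cdot e_T = e_{s_i \cdot T}$ for every tableau $T$ (standard or not) and every simple transposition $s_i$, and then specialize to the covering relation at hand.

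The key step is the observation that column stabilizers of related tableaux are conjugate: $C_{s_i T} = s_i\, C_T\, s_i^{-1}$. This is immediate from the characterization of $C_T$ as the permutations that preserve each column of $T$ as a set. Substituting $\tau = s_i \sigma s_i^{-1}$ into the defining sum for $e_{s_i T}$ and using $\operatorname{sgn}(s_i \sigma s_i^{-1}) = \operatorname{sgn}(\sigma)$ together with $s_i^{-1} \cdot \{s_i T\} = \{T\}$ collapses the sum to $s_i \cdot e_T$. This computation is purely bookkeeping and presents no real obstacle beyond fixing the action convention.

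With this identity in hand, the proposition follows at once. The hypothesis that $T \stackrel{s_i}{\longrightarrow} T'$ is an edge means, in particular, that $T' = s_i \cdot T$ and that both $T$ and $T'$ are standard Young tableaux. Consequently $e_T = v_T$ and $e_{T'} = v_{T'}$ are both honest Specht basis elements, with no straightening relations needed to express either as a linear combination of standard polytabloids. Thus
\[
s_i \cdot v_T \;=\; s_i \cdot e_T \;=\; e_{s_i T} \;=\; e_{T'} \;=\; v_{T'}.
\]

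The only subtlety I anticipate is one of convention: different treatments of the Specht module vary in whether permutations act on positions or entries, and in whether a sign is absorbed into the definition of $e_T$. I would first pin down the convention consistent with the symmetric group action described in the preceding discussion (which reads the action as permuting the labels $1,\dots,3n$ of the boundary points of a web) and then verify that the column-stabilizer conjugation argument produces $+v_{T'}$ rather than $-v_{T'}$ on the right-hand side. This is the only point that requires care; the rest of the argument is the standard polytabloid computation.
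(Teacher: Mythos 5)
Your proof is correct and is exactly the classical polytabloid argument: the conjugation identity $C_{s_iT}=s_iC_Ts_i^{-1}$ gives $s_i\cdot e_T=e_{s_iT}$, and the edge hypothesis guarantees $T'=s_i\cdot T$ is standard so no straightening is needed. The paper states this proposition without proof, deferring to the standard literature on Specht modules, so your argument supplies precisely the justification the paper implicitly relies on.
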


\begin{remark}
Note that  the combinatorial bijection $\psi^{-1}$ between tableaux and webs fails to extend to a map of representations for either $\mathfrak{sl}_2$ or $\mathfrak{sl}_3$. As a concrete example, consider the initial $\mathfrak{sl}_3$ web to which $s_2$ is applied in Figure \ref{fig:exact}. Call this web $w$. Observe that $\psi^{-1}\left(\raisebox{-8pt}{\tiny{\young(13,25,46)}}\right)=w$ and $s_2\cdot T = T' = \raisebox{-8pt}{\tiny{\young(12,35,46)}}$. Now let $w' = \psi^{-1}(T')$. The calculation in Figure \ref{fig:exact} shows that $s_2\cdot w \neq w'$. In other words $\psi^{-1}$ is not $S_{6}$-equivariant since $s_2 \cdot \psi^{-1}(v_T) \neq \psi^{-1}(s_2\cdot v_T)$.
\end{remark}

The following result of the authors describes the structure of the transition matrix between the Specht and web bases in the $\mathfrak{sl}_2$ \cite{MR3920353}.  

\begin{theorem}\label{thm:sl2triang}
Let $<$ be any total order on $\mathcal{W}_{2n}^R$ that completes $\prec_T$, and let $\phi: V^{\mathcal{T}_{2n}} \rightarrow V_{q=1}^{\mathcal{W}_{2n}}$ be the unique (up to scaling) $S_{2n}$-equivariant isomorphism between the Specht and web modules. Using $<$ to order both the Specht and web bases, the matrix for $\phi$ is unitriangular.
\end{theorem}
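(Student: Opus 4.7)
The plan is to prove Theorem~\ref{thm:sl2triang} by induction on the rank $r(T)$ in the ranked partial order $\prec_T$. By Schur's lemma applied to the irreducible $S_{2n}$-modules $V^{\mathcal{T}_{2n}}$ and $V^{\mathcal{W}_{2n}}$ (both corresponding to the partition $(n,n)$), the equivariant isomorphism $\phi$ is unique up to a scalar. Normalize $\phi$ by setting $\phi(v_{T_0}) = w_{T_0}$, where $T_0$ is the column-filled minimum tableau. The goal is to show that $\phi(v_T) = w_T + \sum_{w \prec_T w_T} c^T_w \, w$ for every standard Young tableau $T$, which is equivalent to unitriangularity with respect to any total order extending $\prec_T$: coefficients of webs incomparable to or strictly above $w_T$ must vanish, since otherwise they could fall below the diagonal in some total extension.

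For the inductive step, fix $T$ of rank $r \geq 1$ and choose a cover $T' \stackrel{s_i}{\to} T$ with $r(T') = r-1$; this exists because the Hasse diagram is ranked and connected. Proposition~\ref{prop:tabact} gives $v_T = s_i v_{T'}$, so by equivariance $\phi(v_T) = s_i\phi(v_{T'})= (w_{T'} + w_T) + \sum_{w \prec_T w_{T'}} c^{T'}_w \, s_i w$, where the identity $s_i w_{T'} = w_{T'} + w_T$ comes from Remark~\ref{rmk:sl2Hasse}. For each $w$ in the sum, the smoothing rules (Figure~\ref{fig:smoothings}) give $s_i w = -w$ if $i,i+1$ are paired in $w$, and $s_i w = w + \tilde w$ otherwise, where $\tilde w$ is the resolution pairing $i$ to $i+1$. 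The diagonal coefficient of $w_T$ equals exactly $1$: only the explicit $s_i w_{T'}$ term can contribute, since $r(w) \leq r(w_{T'}) - 1 = r(w_T) - 2$ while $|r(\tilde w) - r(w)| \leq 1$, preventing any resolution $\tilde w$ from coinciding with $w_T$.

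What remains is the \emph{Key Lemma}: for every $w \prec_T w_{T'}$, every web appearing in $s_i w$ lies in $\{w'' : w'' \prec_T w_T\}$. Combined with $w_{T'} \prec_T w_T$, this closes the induction. We prove the lemma by case analysis on the configuration of $i, i+1$ in $w$, invoking Theorem~\ref{thm:samesl2} to translate the tableau order $\prec_T$ into shadow containment $\prec_S$. The case $s_i w = -w$ is trivial, since $w \prec_T w_T$ by transitivity. When $i$ and $i+1$ are both left endpoints or both right endpoints of arcs in $w$, the resolution $\tilde w$ strictly \emph{decreases} rank locally, and a direct shadow comparison shows $S(\tilde w) \subsetneq S(w)$, so $\tilde w \prec_T w \prec_T w_T$ by transitivity.

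The main obstacle is the remaining case, where $i$ is a right endpoint and $i+1$ a left endpoint in $w$ (the ``$i$ below $i+1$'' configuration): here $\tilde w$ is the unique $\prec_T$-cover of $w$, and the inclusion $\tilde w \preceq_T w_T$ is not automatic from $w \preceq_T w_{T'}$. We handle this on the level of shadows. Both pairs $(w, \tilde w)$ and $(w_{T'}, w_T)$ are related by the same local move $(j,i)(i+1,k) \mapsto (j,k)(i,i+1)$, which modifies the shadow only in a predictable neighborhood of the interval $[i, i+1]$, increasing its depth by two while leaving every other point of the axis unchanged. Starting from $S(w) \subseteq S(w_{T'})$ and carefully tracking the local depth profile at the endpoints involved in the swap, one verifies that $S(\tilde w) \subseteq S(w_T)$. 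This depth-tracking argument is the technical heart of the proof; once complete, the induction propagates unitriangularity up the entire poset and yields the theorem.
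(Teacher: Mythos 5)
Your inductive skeleton is sound and is essentially the strategy behind this theorem (the paper only cites the proof from \cite{MR3920353}, describing it as an induction plus lemmas on the rank of reduced summands of $s_i\cdot w$); your variant replaces the rank bookkeeping by shadow containment plus Theorem \ref{thm:samesl2}, which is exactly the route the present paper takes for $\mathfrak{sl}_3$ in Lemmas \ref{lem:edgeshadow1}--\ref{lem:shadowprop}, so that part is legitimate and not circular. Moreover your ``technical heart'' in the last case is easier than you suggest: by the computation in Lemma \ref{cont1} (Figure \ref{fig:Hasse-edge}) the move $(j,i)(i+1,k)\mapsto(j,k)(i,i+1)$ changes boundary depth by exactly $+2$ on $(i,i+1)$ and nowhere else, for both the pair $(w,\tilde w)$ and the pair $(w_{T'},w_T)$, so $S(\tilde w)\subseteq S(w_T)$ is an immediate pointwise comparison via Lemma \ref{lem:axisdepth}; strictness follows since $\tilde w=w_T$ would force $w=w_{T'}$ (the reverse move is unique), contradicting $w\prec_T w_{T'}$.

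The genuine gap is the base case. You ``normalize $\phi$ by setting $\phi(v_{T_0})=w_{T_0}$,'' but scaling is the only freedom you have, so this presupposes that $\phi(v_{T_0})$ is already a scalar multiple of $w_{T_0}$ --- and that is a statement requiring proof (it is the $\mathfrak{sl}_2$ analogue of Lemma \ref{lem:basecase}, i.e.\ Theorem 5.1 of \cite{MR3920353}, also reproved by Im--Zhu). The standard argument is short but must be given: since consecutive column entries of $T_0$ give $s_{2j-1}\cdot v_{T_0}=-v_{T_0}$ for all $j$, equivariance forces $\phi(v_{T_0})$ to lie in the common $(-1)$-eigenspace of all $s_{2j-1}$ acting on $V^{\mathcal{W}_{2n}}$, and one must check this space is spanned by $w_0$ (e.g.\ by examining the coefficient of a web maximal among those appearing that is not $w_0$). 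Separately, a minor inaccuracy: your claim $|r(\tilde w)-r(w)|\leq 1$ is false --- smoothing at two nested left (or right) endpoints can drop the nesting number by more than $1$ --- but only the true direction $r(\tilde w)\leq r(w)+1$ is used, and in fact once your Key Lemma gives $\tilde w\prec_T w_T$ strictly for every summand, the rank argument for the diagonal entry is redundant. With the base case supplied and that sentence corrected, your proof goes through.
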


We emphasize here that unitriangularity with respect to {\em any} partial order completing $\prec_T$ identifies additional vanishing entries above the diagonal in the matrix for $\phi$. Other recent results show this matrix has all nonnegative entries,  and the only vanishing entries are those identified by Theorem \ref{thm:sl2triang} \cite{ImZhu, MR3998730}. 

The proof of Theorem \ref{thm:sl2triang} follows from an inductive argument and several lemmas characterizing the rank of reduced summands of $s_i\cdot w$. We would like to prove the same unitriangularity result in the $\mathfrak{sl}_3$ web setting. However, the approach for $\mathfrak{sl}_2$ cannot be immediately extended because the action of simple transpositions on $\mathfrak{sl}_3$ webs is much more complicated. Instead of analyzing the rank of reduced summands of $s_i \cdot w$ in this case, we track shadow containment.

 The remainder of this paper is dedicated to proving that the map $\phi$ from the Specht to the web basis is unitriangular in terms of the shadow containment partial order (see Theorem \ref{thm:coefficients}). By Corollary \ref{cor:total}, this implies there is a way to complete the partial order $\prec_T$ to a total order with respect to which $\phi$ is unitriangular (see Theorem \ref{thm:existtriang}). Lemma \ref{lem:basecase} is the base case for Theorem \ref{thm:coefficients}. Its proof follows from an argument completely analogous to the $\mathfrak{sl}_2$ case \cite[Theorem 5.1]{MR3920353}; it is also proven in recent work by Im and Zhu \cite[Theorem 3.9]{ImZhu}. 
\begin{lemma}\label{lem:basecase}
Let $T_0\in SYT(n,n,n)$ be the column-filled tableau and let $w_0$ be the corresponding reduced web on $3n$ vertices. Then $\phi\left(v_{T_0}\right) = aw_0$ for some $a\neq 1$.
\end{lemma}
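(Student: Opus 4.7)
The plan is to identify $v_{T_0}$ and $w_0$ as spanning the one-dimensional sign-isotypic subspaces for the column stabilizer $C_{T_0}\cong S_3^n$ in their respective $S_{3n}$-representations, which forces $\phi(v_{T_0})=aw_0$ for some scalar $a$, and then to compute $a$ explicitly via an iterated skein calculation to confirm $a\neq 1$. The polytabloid formula $v_{T_0}=\sum_{\sigma\in C_{T_0}}\mathrm{sgn}(\sigma)\,\sigma\cdot[T_0]$ gives $\tau\cdot v_{T_0}=\mathrm{sgn}(\tau)\,v_{T_0}$ for every $\tau\in C_{T_0}$, and by $S_{3n}$-equivariance $\phi(v_{T_0})$ inherits this sign-antisymmetry in $V^{\mathcal{W}_{3n}}$.

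Since $T_0$ is column-filled, $C_{T_0}$ is a product of $n$ copies of $S_3$ acting on consecutive boundary triples $\{3k-2,3k-1,3k\}$, and $w_0$ is correspondingly a disjoint union of $n$ Y-shapes based at those triples. For a simple transposition $s_i\in C_{T_0}$, the skein rule of Figure~\ref{fig:smoothings} gives $s_i\cdot w_0=w_0+H$, where $H$ is the H-smoothing glued to the two legs of the relevant Y. The top of $H$ meets the Y's apex to form a bigon, so applying the bigon relation of Figure~\ref{fig:webrelations} (a factor of $-2$) and then smoothing the two resulting bivalent vertices reduces $H$ to $-2w_0$. Hence $s_i\cdot w_0=-w_0$, and iterating gives $\tau\cdot w_0=\mathrm{sgn}(\tau)\,w_0$ for every $\tau\in C_{T_0}$. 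A Frobenius reciprocity argument (by the Littlewood--Richardson rule, the sign character of $S_3^n$ appears with multiplicity exactly $1$ in the restriction of $V^{(n,n,n)}$ to $C_{T_0}$, witnessed by the filling $T_0$ itself) then shows the $C_{T_0}$-sign-isotypic subspace of $V^{\mathcal{W}_{3n}}$ is one-dimensional, spanned by $w_0$, so $\phi(v_{T_0})=aw_0$ for some nonzero scalar $a$.

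To verify $a\neq 1$, I would compute $a$ explicitly. Under the natural normalization compatible with the tabloid-level correspondence $[T_0]\leftrightarrow w_0$, expanding $\phi(v_{T_0})$ via the polytabloid formula and applying the identity $\sigma\cdot w_0=\mathrm{sgn}(\sigma)\,w_0$ yields $\phi(v_{T_0})=\sum_{\sigma\in C_{T_0}}\mathrm{sgn}(\sigma)^2\,w_0=|C_{T_0}|\,w_0=6^n\,w_0$, so $a=6^n$. Since $6^n\neq 1$ for every $n\geq 1$, this gives $a\neq 1$. The main obstacle will be the local skein bookkeeping in the second paragraph: one must verify that each H-smoothing of $s_i\cdot w_0$ produces exactly one bigon together with two bivalent vertices (so that the only scalar factor is the single $-2$ from that bigon), which relies on the simple edge structure of a Y-shape whose single-edge legs meet the H's apex.
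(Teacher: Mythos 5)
Your first two paragraphs are essentially the argument the paper has in mind: it gives no proof of its own, deferring to the analogous $\mathfrak{sl}_2$ statement and to Im--Zhu, and that argument is exactly the sign-isotypic one you describe. The skein computation $s_i\cdot w_0=-w_0$ for $i\in\{3k-2,3k-1\}$ is correct (it is also an instance of Lemma \ref{lem:edgeshadow0}, since the boundary word of $w_0$ is $(+0-)^n$), the multiplicity-one count via $e_3^{\,n}$ is right (each vertical $3$-strip must add one box to each of the three rows, so the only chain is $(k,k,k)$), and equivariance plus injectivity of $\phi$ then gives $\phi(v_{T_0})=aw_0$ with $a\neq 0$. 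One small inaccuracy: after the bigon relation of Figure \ref{fig:webrelations} is applied there are no leftover bivalent vertices to smooth, since that relation already absorbs the two external edges; the outcome $-2w_0$ is nevertheless correct.

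The third paragraph is where there is a genuine gap. The map $\phi$ is only defined up to a global scalar, so the coefficient $a$ has no invariant value and cannot be ``computed'' to equal $6^n$; the claim that $a\neq 1$ in the statement is evidently a typo for $a\neq 0$ (the very next sentence of the paper rescales $\phi$ so that $a=1$, which is possible exactly when $a\neq 0$ and would be self-defeating if $a\neq 1$ were an invariant assertion). Your computation presupposes a ``natural normalization compatible with the tabloid-level correspondence $[T_0]\leftrightarrow w_0$,'' i.e.\ an $S_{3n}$-equivariant extension of $\phi$ to the permutation module on tabloids sending $[T_0]\mapsto w_0$. No such correspondence is part of the data: $\phi$ is defined only on the Specht module, and choosing one amounts to choosing the normalization you are trying to determine, so the argument is circular. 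The correct and sufficient conclusion is the one your second paragraph already delivers, namely $a\neq 0$; the explicit value of $a$ is neither needed nor well-defined.
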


For simplicity, assume by rescaling if necessary that $a=1$ so $\phi\left(v_{T_0}\right) = w_0$. The next three lemmas characterize shadows of reduced summands of $s_i \cdot w$ for all nine possible pairs of boundary symbols in the $i$ and $(i+1)^{st}$ position of the boundary word for $w$. Proofs for these lemmas follow from straightforward case analyses. 
 We give example calculations for at least one case of each lemma in Figure \ref{fig:twistcases}; the other cases are completely analogous. See \cite{housley2015robinson} for further examples of similar calculations.

\begin{lemma}\label{lem:edgeshadow0}
If $w$ has $+-$, $+0$, or $0-$ in the $i$ and $(i+1)^{st}$ positions of its boundary word, then $s_i \cdot w = -w$.
\end{lemma}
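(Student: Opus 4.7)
The plan is a local skein computation: I would expand $s_i\cdot w$ using the $\mathfrak{sl}_3$ crossing-smoothing of Figure~\ref{fig:smoothings} and then reduce the result using the web relations of Figure~\ref{fig:webrelations}. Since $s_i\cdot w$ is the identity smoothing (which is just $w$) plus the H-smoothing, the claim will follow from showing that the H-smoothing term reduces to $-2w$.

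The heart of the argument is a case-by-case identification of the local structure of $w$ just above positions $i, i+1$, using the $M$-diagram description of Subsection~\ref{sub:tabweb}. In the $+0$ case, the crossingless Yamanouchi structure of the $(+0)$-matching forces $i$ to be paired with $i+1$; because $[i,i+1]$ contains no other boundary point, this short arc's interval cannot interleave with any other arc's interval, so the arc picks up no $M$-diagram crossings, and the only modification along it is the $0$-modification of Figure~\ref{fig:MMod} at $i+1$. That modification creates a sink vertex $v$ to which both boundary strands at $i$ and $i+1$ connect by single web edges, with a third edge continuing the $(0-)$-arc starting at $i+1$. The $0-$ case is mirror symmetric. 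In the $+-$ case, the $(+0)$-arc leaving $i$ and the $(0-)$-arc terminating at $i+1$ have interleaving intervals $[i,k]$ and $[j,i+1]$ (with $j<i<i+1<k$) and so must cross; the crossing modification of Figure~\ref{fig:MMod} inserts a bottom sink $v$ and a top source joined by a middle edge, with both boundary strands reaching $v$ directly and the middle edge serving as $v$'s third edge.

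Once the local picture is in hand, the H-smoothing is easy to describe: it inserts a new sink $v_1$ attached to the new boundary positions $i, i+1$, a new source $v_2$ above $v_1$, a middle edge oriented from $v_2$ down to $v_1$, and two further edges from $v_2$ continuing up through the (now interior) old boundary points into $v$. These two further edges form an oriented bigon from source $v_2$ to sink $v$, and the $\mathfrak{sl}_3$ bigon relation of Figure~\ref{fig:webrelations} replaces this bigon with $-2$ times a single edge joining $v_2$'s remaining edge (down to $v_1$) to $v$'s remaining edge (into the rest of $w$). The resulting web is structurally identical to $w$ with $v_1$ in the role of $v$, so the H-smoothing contributes $-2w$ and $s_i\cdot w = w - 2w = -w$.

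The main obstacle is step one, pinning down the local structure at $i,i+1$ cleanly. The $+0$ and $0-$ cases rest on the combinatorial fact that the short arcs between consecutive boundary positions cannot meet other crossings; the $+-$ case additionally requires verifying that both boundary strands reach the crossing modification directly. Once this local analysis is established, the remainder of the proof is a single application of the bigon relation.
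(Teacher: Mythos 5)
Your proposal is correct and is essentially the paper's own argument: the paper, too, pins down the local picture at positions $i$, $i+1$ via the $M$-diagram modifications of Figure \ref{fig:MMod}, appends the H-smoothing, and applies the bigon relation to conclude that the extra term equals $-2w$, treating the $+-$ case explicitly in Figure \ref{fig:twistcases} and declaring the remaining cases analogous. The step you flag in the $+-$ case does hold (any other arc crossing one of the two strands must nest strictly outside the interleaving pair and hence lies above it, so their mutual crossing is the first vertex met from either boundary point), so your outline has no gap beyond what the paper itself leaves as a straightforward case check.
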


\begin{lemma}\label{lem:edgeshadow1}
If $w$ has $++$, $00$, or $--$ in the $i$ and $(i+1)^{st}$ positions of its boundary word, then $\widetilde{w}\prec_S w$ for all reduced summands $\widetilde{w}\neq w$ of $s_i \cdot w$.
\end{lemma}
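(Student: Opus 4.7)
The plan is to expand $s_i \cdot w$ using the smoothing rules of Figure~\ref{fig:smoothings} to obtain $s_i\cdot w = w + u$, where $u$ is the (generally unreduced) web produced by the ``H'' smoothing at positions $i, i+1$. I would then reduce $u$ using the relations of Figure~\ref{fig:webrelations} and analyze the shadows of the resulting reduced summands, invoking Lemma~\ref{lem:shadowred} to bound their shadows and Remark~\ref{rem:sl3undet} to upgrade those bounds to strict containment.

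The argument splits into three cases according to the symbols at positions $i, i+1$. In the $++$ case, the boundary depths of $w$ read $d, d+1, d+2$ across $(i-\tfrac{1}{2}, i+\tfrac{1}{2}, i+\tfrac{3}{2})$, and a direct computation of dual-graph distances shows that the new triangular face below the H in $u$ has two web-edge neighbors of depths $d$ and $d+2$, giving depth $\min(d, d+2)+1 = d+1$. This matches the depth between $i$ and $i+1$ in $w$, so $S(u) = S(w)$, and Lemma~\ref{lem:shadowred} gives $S(\widetilde{w}) \subseteq S(w)$ for every reduced summand $\widetilde{w}$ of $u$. The $--$ case is symmetric with depths $d, d-1, d-2$, and again $S(u) = S(w)$.

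The $00$ case is the most delicate, because both faces flanking $(i, i+1)$ in $w$ have depth $d$, so the new triangular face in $u$ acquires depth $d+1$ and one only gets $S(u) \supsetneq S(w)$. The key observation here is that $u$'s boundary word differs from $w$'s only by replacing $00$ at positions $i, i+1$ with $+-$: it is still $(+-)$-balanced, but it is no longer $(+0)$- or $(0-)$-balanced, even though every reduced web in $\mathcal{W}_{3n}$ must have a $(+0)$- and $(0-)$-balanced boundary word by Proposition~\ref{prop:sl3webword}. The relations of Figure~\ref{fig:webrelations} must therefore modify the depth profile of $u$ during reduction. A case analysis on the local structure of $w$ near positions $i, i+1$ (where both vertices arise as endpoints of $(+0)$ and $(0-)$ arcs in the $M$-diagram of $w$) identifies a $4$-cycle or bigon in $u$ formed by the rung of the H together with the edges of $w$ just above $i$ and $i+1$, and applying the corresponding relation from Figure~\ref{fig:webrelations} yields summands whose boundary words restore balance by dropping the depth at $(i+\tfrac{1}{2})$ back down to $d$, which verifies $S(\widetilde{w}) \subseteq S(w)$ for each reduced summand $\widetilde{w}$ of $u$.

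Strict containment in every case is immediate: Remark~\ref{rem:sl3undet} says each reduced $\mathfrak{sl}_3$ bottom web is determined by its shadow, so $\widetilde{w} \neq w$ forces $S(\widetilde{w}) \neq S(w)$, and combined with $S(\widetilde{w}) \subseteq S(w)$ this yields $\widetilde{w} \prec_S w$. I expect the main obstacle to be the bookkeeping in the $00$ case, specifically verifying that the reductions compelled by the balance constraint always collapse the extra depth of the triangular face below the H rather than leaving it in some reduced summand; an argument via Figure~\ref{fig:twistcases} or a direct enumeration of the local structures should make this explicit.
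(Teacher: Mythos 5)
Your overall route is the paper's: expand $s_i\cdot w = w+\widehat{w}$ via Figure \ref{fig:smoothings}, bound the boundary depths of $\widehat{w}$ by those of $w$, and pass to reduced summands with Lemma \ref{lem:shadowred} and Lemma \ref{lem:depthshad}, case by case on the symbol pair; this is exactly the case analysis recorded in Figure \ref{fig:twistcases}. Two of your specific claims need repair, though. In the $++$ and $--$ cases the assertion $S(u)=S(w)$ is false: in $u$ the two outer faces become adjacent across the rung of the H, so the deeper of them (right of $i+1$ for $++$, left of $i$ for $--$) has its depth drop by one --- the paper's own labels in the $++$ row of Figure \ref{fig:twistcases} show $d+2$ in $w$ becoming $d+1$ in $\widehat{w}$. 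Likewise the triangle's neighbors in $u$ do not have depths $d$ and $d+2$; those are depths in $w$. Only the inclusion $S(u)\subseteq S(w)$ survives, which fortunately is all your argument uses.

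The more serious issue is the $00$ case. The failure of $(+0)$- and $(0-)$-balance (Proposition \ref{prop:sl3webword}) shows only that $u$ is unreduced; it does not force the depth over $(i,i+1)$ back down to $d$ in every reduced summand. Since $S(\widetilde{w})\subseteq S(u)$ only bounds depths from above, a reduced summand could a priori keep depth $d+1$ on $(i,i+1)$ (symbols $+-$ there) and restore balance by lowering symbols elsewhere, so ``balance must be restored'' is not by itself a proof that the extra depth collapses. What actually closes the case is structural: because $i$ and $i+1$ both carry $0$, the $(0-)$-arc leaving $i$ and the $(+0)$-arc entering $i+1$ interleave, hence cross exactly once in the $M$-diagram, and no other arc can enter the region they cut off; consequently the face of $w$ meeting the axis on $(i,i+1)$ has exactly four web edges, and in $u$ the corresponding internal face is a genuine square --- bounded by the two edges leaving the top vertex of the H together with the two edges of $w$ just above $i$ and $i+1$ meeting at a crossing-derived vertex, not by the rung as you describe. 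Applying the square relation of Figure \ref{fig:webrelations} and checking local depths (the computation in the $00$ row of Figure \ref{fig:twistcases}) shows both resulting webs have depth $d$ on $(i,i+1)$ and depths at most those of $w$ elsewhere; then Lemma \ref{lem:shadowred}, Lemma \ref{lem:depthshad}, and Remark \ref{rem:sl3undet} finish as you say. In short, the step you defer as ``bookkeeping'' is the heart of the $00$ case, and the mechanism you propose for it does not substitute for that verification.
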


\begin{lemma}\label{lem:edgeshadow2}
If $w$ has $-+$, $-0$, or $0+$ in the $i$ and $(i+1)^{st}$ positions of its boundary word, then there is an edge directed out of $w$ in the Hasse diagram for $\prec_T$.  Suppose this edge is $w\stackrel{s_i}{\longrightarrow} w'$.  Then for each reduced summand $\widetilde{w}$ of $s_i \cdot w$ we have $\widetilde{w}\preceq_S w'$.
\end{lemma}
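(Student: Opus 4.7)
The plan is to first identify the edge $w \stackrel{s_i}{\to} w'$ in the Hasse diagram for $\prec_T$, then to resolve $s_i \cdot w$ via the smoothing rules of Figure \ref{fig:smoothings}, and finally to compare boundary depths of the resulting summands to those of $w'$. For existence of the edge, I would invoke Lemma \ref{lem:edge to bdry word}: the three boundary configurations $-+$, $-0$, $0+$ at positions $i, i+1$ correspond (with roles reversed) to the three cases listed there, so swapping the symbols at positions $i$ and $i+1$ moves a lower-row label upward and produces the required edge.

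Next, I would use the smoothing rules to expand $s_i \cdot w = w + w''$, where $w''$ is the (generally unreduced) web produced by the H-gadget: the boundary strands at $i$ and $i+1$ are rerouted to meet at a new sink $v_1$, which is connected by a vertical edge to a new source $v_2$, from which two edges split back to the original continuations in $w$. The identity summand is $w$ itself, and $w \prec_S w'$ follows immediately from Corollary \ref{cor:TvsS}. For the reduced summands that arise from $w''$, Lemma \ref{lem:shadowred} reduces the problem to showing $S(w'') \subseteq S(w')$, which by Lemma \ref{lem:axisdepth} is equivalent to comparing boundary depths point by point.

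The central computation concerns the depth of the new triangular face $F_{below}$ bounded by the $x$-axis segment from $i$ to $i+1$ together with the two edges from $i$ and $i+1$ to $v_1$. This depth equals $1 + \min(d_L, d_R)$, where $d_L$ and $d_R$ are the depths of the two faces of $w''$ adjacent to $F_{below}$ across those edges, and these faces are precisely the faces containing the $x$-axis segments immediately to the left of $i$ and to the right of $i+1$. A direct case check across the three boundary configurations shows that this minimum yields exactly the depth that $w'$ has on the interval $(i, i+1)$ prescribed by Lemma \ref{lem:edge to bdry word}. Outside $(i, i+1)$, the H-gadget can only introduce dual-graph shortcuts via the new interior edge $v_1 v_2$, so the depth at every other boundary segment in $w''$ is at most its depth in $w$, which agrees with $w'$ off of $(i, i+1)$.

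The main obstacle will be to verify that the H-gadget really cannot raise depths at boundary segments away from $(i, i+1)$. The delicate point is that the smoothing both removes the original edges at $i$ and $i+1$ (which could conceivably lengthen dual-graph paths to the unbounded face) and adds the auxiliary interior faces $F_{below}$ and $F_{above}$ together with the new edge $v_1 v_2$. The cleanest way to handle this is to show that any shortest dual path in $w$ that crossed one of the removed edges can be rerouted through $F_{above}$ or through the faces that absorb the left and right portions of the old face between $i, i+1$ in $w$, with no increase in length---a finite case analysis based on which neighbor the $w$-path used next. Once that is established, the boundary-depth comparison yields $S(w'') \subseteq S(w')$, and hence $\widetilde{w} \preceq_S w'$ for every reduced summand $\widetilde{w}$ of $s_i \cdot w$.
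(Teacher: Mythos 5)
Your proposal is correct and follows essentially the same route as the paper: expand $s_i\cdot w = w + \widehat{w}$ via the smoothing rule, bound the boundary depths of $\widehat{w}$ (the new face over $(i,i+1)$ has depth at most one more than the smaller of its two neighbors, which matches the depth of $w'$ there, while depths elsewhere cannot increase), and then pass to reduced summands via Lemmas \ref{lem:shadowred} and \ref{lem:depthshad}. The only difference is presentational: the paper carries out this depth bookkeeping by explicit local pictures (Figure \ref{fig:twistcases}) and declares the remaining cases analogous, whereas you package the same computation into the $1+\min(d_L,d_R)$ bound plus a dual-graph rerouting argument for the faces away from $(i,i+1)$.
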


\begin{figure}[h]
\scalebox{.75}{\begin{tabular}{|c|c|}
\hline
& \\
{\bf Boundary word for $w$} & {\bf Local Calculation of $s_i \cdot w$} \\
{\bf at $i$ and $i+1$} & {\bf at vertices $i$ and $i+1$}  \\
&   \\ \hline 
&   \\ 
& $M$-diagram for $w = \raisebox{-25pt}{\begin{tikzpicture}[scale=.4]
                              \draw[style=dashed, <->] (-1,0)--(4.75,0);
                              \draw[radius=.08, fill=black](.75,0)circle;
                              \draw[radius=.08, fill=black](2.75,0)circle;
                               \node at (.75,-.5){$+$};
                             \node at (2.75,-.5){$-$};
                                                           \begin{scope}[thick,decoration={
                                             markings,
                                             mark=at position 0.75 with {\arrow{>}}}
                              ]
                              \draw[postaction={decorate}] (.75,0)to[out=90,in=180](4,3);
                              \draw[postaction={decorate}] (2.75,0)to[out=90,in=0](-.5,3);
                              \end{scope}
                              \end{tikzpicture}}$  \\ 
\begin{huge}$+-$\end{huge} 
                              & $w = 
                              \raisebox{-15pt}{\begin{tikzpicture}[scale=.4]
                              \draw[style=dashed, <->] (-1,0)--(4.75,0);
                              \draw[radius=.08, fill=black](.75,0)circle;
                              \draw[radius=.08, fill=black](2.75,0)circle;
                              \draw[radius=.08, fill=black](1.75,1.5)circle;
                              \draw[radius=.08, fill=black](1.75,2)circle;
    \node at (.75,-.5){$+$};
                             \node at (2.75,-.5){$-$};

                              \begin{scope}[thick,decoration={
                                             markings,
                                             mark=at position 0.5 with {\arrow{>}}}
                              ]
                              \draw[postaction={decorate}] (.75,0) to[out=90,in=225](1.75,1.5);
                              \draw[postaction={decorate}] (2.75,0)to[out=90,in=-45](1.75,1.5);
                               \draw[postaction={decorate}] (1.75,2)--(1.75,1.5);
                                \draw[postaction={decorate}] (1.75,2)--(.75,3);
                                 \draw[postaction={decorate}] (1.75,2)--(2.75,3);
                              \end{scope}
                              \end{tikzpicture}}$ \\ 
                              & $s_i \cdot w = w+\widehat{w} =  \raisebox{-15pt}{\begin{tikzpicture}[scale=.4]
                              \draw[style=dashed, <->] (-1,0)--(4.75,0);
                              \draw[radius=.08, fill=black](.75,0)circle;
                              \draw[radius=.08, fill=black](2.75,0)circle;
                              \draw[radius=.08, fill=black](1.75,1.5)circle;
                              \draw[radius=.08, fill=black](1.75,2)circle;

                              \begin{scope}[thick,decoration={
                                             markings,
                                             mark=at position 0.5 with {\arrow{>}}}
                              ]
                              \draw[postaction={decorate}] (.75,0) to[out=90,in=225](1.75,1.5);
                              \draw[postaction={decorate}] (2.75,0)to[out=90,in=-45](1.75,1.5);
                               \draw[postaction={decorate}] (1.75,2)--(1.75,1.5);
                                \draw[postaction={decorate}] (1.75,2)--(.75,3);
                                 \draw[postaction={decorate}] (1.75,2)--(2.75,3);
                              \end{scope}
                              \end{tikzpicture}}
                              +
                               \raisebox{-15pt}{\begin{tikzpicture}[scale=.4]
                              \draw[style=dashed, <->] (-1,0)--(4.75,0);
                              \draw[radius=.08, fill=black](.75,0)circle;
                              \draw[radius=.08, fill=black](2.75,0)circle;
                              \draw[radius=.08, fill=black](1.75,.5)circle;
                              \draw[radius=.08, fill=black](1.75,1)circle;
                              \draw[radius=.08, fill=black](1.75,2)circle;
                              \draw[radius=.08, fill=black](1.75,2.5)circle;

                              \begin{scope}[thick,decoration={
                                             markings,
                                             mark=at position 0.5 with {\arrow{>}}}
                              ]
                              \draw[postaction={decorate}] (.75,0) to[out=90,in=190](1.75,.5);
                              \draw[postaction={decorate}] (2.75,0)to[out=90,in=-10](1.75,.5);
                               \draw[postaction={decorate}] (1.75,1)--(1.75,.5);
                                \draw[postaction={decorate}] (1.75,1) to[out=170,in=270] (1.25,1.5) to[out=90,in=190](1.75,2);
                                 \draw[postaction={decorate}] (1.75,1) to[out=10,in=270] (2.25,1.5) to[out=90,in=-10](1.75,2);
                                 \draw[postaction={decorate}] (1.75,2.5)--(1.75,2);
                                 \draw[postaction={decorate}] (1.75,2.5)--(.75,3);
                                 \draw[postaction={decorate}] (1.75,2.5)--(2.75,3);
                              \end{scope}
                              \end{tikzpicture}}
                              = w-2w$  \\ 
                              & \\
                              & {\bf Conclusion}: In this case $s_i \cdot w = -w$ as desired \\ 
                              & \\ \hline
&   \\ 
& $w = \raisebox{-25pt}{\begin{tikzpicture}[scale=.4]
                              \draw[style=dashed, <->] (-1,0)--(4.75,0);
                              \draw[radius=.08, fill=black](.75,0)circle;
                              \draw[radius=.08, fill=black](2.75,0)circle;
                               \node at (.75,-.5){$+$};
                             \node at (2.75,-.5){$+$};
                             \node at (0,1) {\tiny{$d$}};
                             \node at (1.75, 1) {\tiny{$d+1$}};
                              \node at (3.75, 1) {\tiny{$d+2$}};
                             
                              \begin{scope}[thick,decoration={
                                             markings,
                                             mark=at position 0.5 with {\arrow{>}}}
                              ]
                              \draw[postaction={decorate}] (.75,0)--(.75,3);
                              \draw[postaction={decorate}] (2.75,0)--(2.75,3);
                              \end{scope}
                              \end{tikzpicture}}$ \\
\begin{huge}$++$\end{huge} & $s_i \cdot w = w+\widehat{w} =  \raisebox{-15pt}{\begin{tikzpicture}[scale=.4]
                              \draw[style=dashed, <->] (-1,0)--(4.75,0);
                              \draw[radius=.08, fill=black](.75,0)circle;
                              \draw[radius=.08, fill=black](2.75,0)circle;
                             \node at (0,1) {\tiny{$d$}};
                             \node at (1.75, 1) {\tiny{$d+1$}};
                              \node at (3.75, 1) {\tiny{$d+2$}};
                             
                              \begin{scope}[thick,decoration={
                                             markings,
                                             mark=at position 0.5 with {\arrow{>}}}
                              ]
                              \draw[postaction={decorate}] (.75,0)--(.75,3);
                              \draw[postaction={decorate}] (2.75,0)--(2.75,3);
                              \end{scope}
                              \end{tikzpicture}}
                              +
                              \raisebox{-15pt}{\begin{tikzpicture}[scale=.4]
                              \draw[style=dashed, <->] (-1,0)--(4.75,0);
                              \draw[radius=.08, fill=black](.75,0)circle;
                              \draw[radius=.08, fill=black](2.75,0)circle;
                              \draw[radius=.08, fill=black](1.75,1.5)circle;
                              \draw[radius=.08, fill=black](1.75,2)circle;

                             \node at (0,1) {\tiny{$d$}};
                             \node at (1.75, .3) {\tiny{$d+1$}};
                              \node at (1.75, 3) {\tiny{$d+1$}};
                              \node at (3.75, 1) {\tiny{$d+1$}};
                             
                              \begin{scope}[thick,decoration={
                                             markings,
                                             mark=at position 0.5 with {\arrow{>}}}
                              ]
                              \draw[postaction={decorate}] (.75,0) to[out=90,in=225](1.75,1.5);
                              \draw[postaction={decorate}] (2.75,0)to[out=90,in=-45](1.75,1.5);
                               \draw[postaction={decorate}] (1.75,2)--(1.75,1.5);
                                \draw[postaction={decorate}] (1.75,2)--(.75,3);
                                 \draw[postaction={decorate}] (1.75,2)--(2.75,3);
                              \end{scope}
                              \end{tikzpicture}}
                              $  \\
                              & \\ 
&  {\bf Conclusion}: If $\widetilde{w}$ is a reduced summand of $\widehat{w}$ then $S(\widetilde{w}) \subseteq S(\widehat{w})$ by Lemma \ref{lem:shadowred}. \\
& Depth along the boundary is equivalent to $\prec_S$ by Lemma \ref{lem:depthshad} so we have $\widetilde{w} \prec_S w$ \\ 
& \\ \hline
&   \\ 
& $M$-diagram for $w = \raisebox{-25pt}{\begin{tikzpicture}[scale=.4]
                              \draw[style=dashed, <->] (-1,0)--(4.75,0);
                              \draw[radius=.08, fill=black](.75,0)circle;
                              \draw[radius=.08, fill=black](2.75,0)circle;
                               \node at (.75,-.5){$0$};
                             \node at (2.75,-.5){$0$};
                                                           \begin{scope}[thick,decoration={
                                             markings,
                                             mark=at position 0.75 with {\arrow{<}}}
                              ]
                              \draw[postaction={decorate}] (.75,0)to[out=90,in=180](4,3);
                              \draw[postaction={decorate}] (.75,0)to[out=90,in=0](-.5,2);
                              \draw[postaction={decorate}] (2.75,0)to[out=90,in=0](-.5,3);
                              \draw[postaction={decorate}] (2.75,0)to[out=90,in=180](4,2);
                              \end{scope}
                              \end{tikzpicture}}$  \\ 
\begin{huge}$00$\end{huge} 
                              & $w = 
                              \raisebox{-15pt}{\begin{tikzpicture}[scale=.4]
                              \draw[style=dashed, <->] (-1,0)--(4.75,0);
                              \draw[radius=.08, fill=black](.75,0)circle;
                              \draw[radius=.08, fill=black](2.75,0)circle;
                               \draw[radius=.08, fill=black](.75,.75)circle;
                              \draw[radius=.08, fill=black](2.75,.75)circle;
                              \draw[radius=.08, fill=black](1.75,2.5)circle;
                              \draw[radius=.08, fill=black](1.75,1.75)circle;
    \node at (.75,-.5){$0$};
                             \node at (2.75,-.5){$0$};
                             \node at (0,1) {\tiny{$d$}};
                             \node at (1.75, .75) {\tiny{$d$}};
                              \node at (1.75, 3.5) {\tiny{$d-2$}};
                              \node at (3.75, 1) {\tiny{$d$}};
                              \node at (.75,2) {\tiny{$d-1$}};
                               \node at (2.75,2) {\tiny{$d-1$}};
                             
                              \begin{scope}[thick,decoration={
                                             markings,
                                             mark=at position 0.5 with {\arrow{>}}}
                              ]
                              \draw[postaction={decorate}] (.75,0)--(.75,.75);
                               \draw[postaction={decorate}] (2.75,0)--(2.75,.75);
                              \draw[postaction={decorate}] (1.75,1.75)--(.75,.75);
                               \draw[postaction={decorate}] (1.75,1.75)--(2.75,.75);
                                \draw[postaction={decorate}] (-.5,2)--(.75,.75);
                                 \draw[postaction={decorate}] (4,2)--(2.75,.75);
                                 \draw[postaction={decorate}] (1.75,1.75)--(1.75,2.5);
                                  \draw[postaction={decorate}] (-.5,3)--(1.75,2.5);
                                 \draw[postaction={decorate}] (4,3)--(1.75,2.5);
                              \end{scope}
                              \end{tikzpicture}}$ \\ 
                              & $s_i \cdot w = w+\widehat{w} =  \raisebox{-15pt}{\begin{tikzpicture}[scale=.4]
                              \draw[style=dashed, <->] (-1,0)--(4.75,0);
                              \draw[radius=.08, fill=black](.75,0)circle;
                              \draw[radius=.08, fill=black](2.75,0)circle;
                               \draw[radius=.08, fill=black](.75,.75)circle;
                              \draw[radius=.08, fill=black](2.75,.75)circle;
                              \draw[radius=.08, fill=black](1.75,2.5)circle;
                              \draw[radius=.08, fill=black](1.75,1.75)circle;

                             \node at (0,1) {\tiny{$d$}};
                             \node at (1.75, .75) {\tiny{$d$}};
                              \node at (1.75, 3.5) {\tiny{$d-2$}};
                              \node at (3.75, 1) {\tiny{$d$}};
                              \node at (.75,2) {\tiny{$d-1$}};
                               \node at (2.75,2) {\tiny{$d-1$}};
                             
                              \begin{scope}[thick,decoration={
                                             markings,
                                             mark=at position 0.5 with {\arrow{>}}}
                              ]
                              \draw[postaction={decorate}] (.75,0)--(.75,.75);
                               \draw[postaction={decorate}] (2.75,0)--(2.75,.75);
                              \draw[postaction={decorate}] (1.75,1.75)--(.75,.75);
                               \draw[postaction={decorate}] (1.75,1.75)--(2.75,.75);
                                \draw[postaction={decorate}] (-.5,2)--(.75,.75);
                                 \draw[postaction={decorate}] (4,2)--(2.75,.75);
                                 \draw[postaction={decorate}] (1.75,1.75)--(1.75,2.5);
                                  \draw[postaction={decorate}] (-.5,3)--(1.75,2.5);
                                 \draw[postaction={decorate}] (4,3)--(1.75,2.5);
                              \end{scope}
                              \end{tikzpicture}}
                              +
                               \raisebox{-17pt}{\begin{tikzpicture}[scale=.4]
                              \draw[style=dashed, <->] (-1,0)--(4.75,0);
                              \draw[radius=.08, fill=black](.5,0)circle;
                              \draw[radius=.08, fill=black](3,0)circle;
                               \draw[radius=.08, fill=black](.75,1.25)circle;
                              \draw[radius=.08, fill=black](2.75,1.25)circle;
                              \draw[radius=.08, fill=black](1.75,2.5)circle;
                              \draw[radius=.08, fill=black](1.75,1.75)circle;
                              \draw[radius=.08, fill=black](1.75,.95)circle;
                               \draw[radius=.08, fill=black](1.75,.75)circle;

                             \node at (0,1) {\tiny{$d$}};
                             \node at (1.75, 1.35) {\tiny{$d$}};
                              \node at (1.75, 3.5) {\tiny{$d-2$}};
                               \node at (1.75, .25) {\tiny{$d+1$}};
                              \node at (3.75, 1) {\tiny{$d$}};
                              \node at (.75,2.25) {\tiny{$d-1$}};
                               \node at (2.75,2.25) {\tiny{$d-1$}};
                             
                              \begin{scope}[thick,decoration={
                                             markings,
                                             mark=at position 0.5 with {\arrow{>}}}
                              ]
                              \draw[postaction={decorate}] (1.75,.95)--(.75,1.25);
                               \draw[postaction={decorate}] (1.75,.95)--(1.75,.75);
                                \draw[postaction={decorate}] (.5,0) to[out=90,in=200] (1.75,.75);
                                 \draw[postaction={decorate}] (3,0)to[out=90,in=-20](1.75,.75);
                              \draw[postaction={decorate}] (1.75,.95)--(2.75,1.25);
                              \draw[postaction={decorate}] (1.75,1.75)--(.75,1.25);
                               \draw[postaction={decorate}] (1.75,1.75)--(2.75,1.25);
                                \draw[postaction={decorate}] (-.5,2)--(.75,1.25);
                                 \draw[postaction={decorate}] (4,2)--(2.75,1.25);
                                 \draw[postaction={decorate}] (1.75,1.75)--(1.75,2.5);
                                  \draw[postaction={decorate}] (-.5,3)--(1.75,2.5);
                                 \draw[postaction={decorate}] (4,3)--(1.75,2.5);
                              \end{scope}
                              \end{tikzpicture}} 
                              = \raisebox{-15pt}{\begin{tikzpicture}[scale=.4]
                              \draw[style=dashed, <->] (-1,0)--(4.75,0);
                              \draw[radius=.08, fill=black](.75,0)circle;
                              \draw[radius=.08, fill=black](2.75,0)circle;
                               \draw[radius=.08, fill=black](.75,.75)circle;
                              \draw[radius=.08, fill=black](2.75,.75)circle;
                              \draw[radius=.08, fill=black](1.75,2.5)circle;
                              \draw[radius=.08, fill=black](1.75,1.75)circle;

                             \node at (0,1) {\tiny{$d$}};
                             \node at (1.75, .75) {\tiny{$d$}};
                              \node at (1.75, 3.5) {\tiny{$d-2$}};
                              \node at (3.75, 1) {\tiny{$d$}};
                              \node at (.75,2) {\tiny{$d-1$}};
                               \node at (2.75,2) {\tiny{$d-1$}};
                             
                              \begin{scope}[thick,decoration={
                                             markings,
                                             mark=at position 0.5 with {\arrow{>}}}
                              ]
                              \draw[postaction={decorate}] (.75,0)--(.75,.75);
                               \draw[postaction={decorate}] (2.75,0)--(2.75,.75);
                              \draw[postaction={decorate}] (1.75,1.75)--(.75,.75);
                               \draw[postaction={decorate}] (1.75,1.75)--(2.75,.75);
                                \draw[postaction={decorate}] (-.5,2)--(.75,.75);
                                 \draw[postaction={decorate}] (4,2)--(2.75,.75);
                                 \draw[postaction={decorate}] (1.75,1.75)--(1.75,2.5);
                                  \draw[postaction={decorate}] (-.5,3)--(1.75,2.5);
                                 \draw[postaction={decorate}] (4,3)--(1.75,2.5);
                              \end{scope}
                              \end{tikzpicture}} +
                               \raisebox{-17pt}{\begin{tikzpicture}[scale=.4]
                              \draw[style=dashed, <->] (-1,0)--(4.75,0);
                              \draw[radius=.08, fill=black](.5,0)circle;
                              \draw[radius=.08, fill=black](3,0)circle;
                       
                              \draw[radius=.08, fill=black](1.75,2.5)circle;
                             
                               \draw[radius=.08, fill=black](1.75,.75)circle;

                             \node at (1.75, 1.35) {\tiny{$d-1$}};
                              \node at (1.75, 3.5) {\tiny{$d-2$}};
                               \node at (1.75, .35) {\tiny{$d$}};
                              \node at (.75,2.25) {\tiny{$d-1$}};
                               \node at (3.75, 1) {\tiny{$d$}};

                              \begin{scope}[thick,decoration={
                                             markings,
                                             mark=at position 0.5 with {\arrow{>}}}
                              ]

                               \draw[postaction={decorate}] (4,2)--(1.75,.75);
                                \draw[postaction={decorate}] (.5,0) to[out=90,in=200] (1.75,.75);
                                 \draw[postaction={decorate}] (3,0)to[out=90,in=-20](1.75,.75);
                    
                                 \draw[postaction={decorate}] (-.5,1.75)to[out=0, in=270] (1.75,2.5);
                                  \draw[postaction={decorate}] (-.5,3)--(1.75,2.5);
                                 \draw[postaction={decorate}] (4,3)--(1.75,2.5);
                              \end{scope}
                              \end{tikzpicture}}+
                              
                               \raisebox{-17pt}{\begin{tikzpicture}[scale=.4]
                              \draw[style=dashed, <->] (-1,0)--(4.75,0);
                              \draw[radius=.08, fill=black](.5,0)circle;
                              \draw[radius=.08, fill=black](3,0)circle;
                       
                              \draw[radius=.08, fill=black](1.75,2.5)circle;
                             
                               \draw[radius=.08, fill=black](1.75,.75)circle;

                             \node at (1.75, 1.45) {\tiny{$d-1$}};
                              \node at (1.75, 3.5) {\tiny{$d-2$}};
                               \node at (1.75, .35) {\tiny{$d$}};
                              \node at (2.75,2.25) {\tiny{$d-1$}};
                               \node at (0, 1) {\tiny{$d$}};

                              \begin{scope}[thick,decoration={
                                             markings,
                                             mark=at position 0.5 with {\arrow{>}}}
                              ]

                               \draw[postaction={decorate}] (-.5,2)--(1.75,.75);
                                \draw[postaction={decorate}] (.5,0) to[out=90,in=200] (1.75,.75);
                                 \draw[postaction={decorate}] (3,0)to[out=90,in=-20](1.75,.75);
                    
                                 \draw[postaction={decorate}] (4,2)to[out=180, in=270] (1.75,2.5);
                                  \draw[postaction={decorate}] (-.5,3)--(1.75,2.5);
                                 \draw[postaction={decorate}] (4,3)--(1.75,2.5);
                              \end{scope}
                              \end{tikzpicture}}
                              $  \\ 
                              & \\
                              & {\bf Conclusion}: The web $\widehat{w}$ is never reduced. All reduced summands $\widetilde{w}$ of $\widehat{w}$ satisfy $\widetilde{w} \prec_S w$ by Lemmas \ref{lem:shadowred} and \ref{lem:depthshad} \\ 
                              & \\ \hline
&   \\ 
&   In this case $w \stackrel{s_i}{\longrightarrow} w'$ is an edge in the Hasse diagram for $\prec_T$.\\
& \\ 
& $w = \raisebox{-25pt}{\begin{tikzpicture}[scale=.4]
                              \draw[style=dashed, <->] (-1,0)--(4.75,0);
                              \draw[radius=.08, fill=black](.75,0)circle;
                              \draw[radius=.08, fill=black](2.75,0)circle;
                               \node at (.75,-.5){$-$};
                             \node at (2.75,-.5){$0$};
                             \node at (0,1) {\tiny{$d$}};
                             \node at (1.75, 1) {\tiny{$d-1$}};
                              \node at (3.75, 1) {\tiny{$d-1$}};
                             
                              \begin{scope}[thick,decoration={
                                             markings,
                                             mark=at position 0.5 with {\arrow{>}}}
                              ]
                              \draw[postaction={decorate}] (.75,0)--(.75,3);
                              \draw[postaction={decorate}] (2.75,0)--(2.75,3);
                              \end{scope}
                              \end{tikzpicture}}$ \hspace{.5in} $w' = \raisebox{-25pt}{\begin{tikzpicture}[scale=.4]
                              \draw[style=dashed, <->] (-1,0)--(4.75,0);
                              \draw[radius=.08, fill=black](.75,0)circle;
                              \draw[radius=.08, fill=black](2.75,0)circle;
                               \node at (.75,-.5){$0$};
                             \node at (2.75,-.5){$-$};
                             \node at (0,1) {\tiny{$d$}};
                             \node at (1.75, 1) {\tiny{$d$}};
                              \node at (3.75, 1) {\tiny{$d-1$}};
                             
                              \begin{scope}[thick,decoration={
                                             markings,
                                             mark=at position 0.5 with {\arrow{>}}}
                              ]
                              \draw[postaction={decorate}] (.75,0)--(.75,3);
                              \draw[postaction={decorate}] (2.75,0)--(2.75,3);
                              \end{scope}
                              \end{tikzpicture}}$ \\
                            
\begin{huge}$-0$\end{huge} & $s_i \cdot w = w+\widehat{w} =  \raisebox{-15pt}{\begin{tikzpicture}[scale=.4]
                              \draw[style=dashed, <->] (-1,0)--(4.75,0);
                              \draw[radius=.08, fill=black](.75,0)circle;
                              \draw[radius=.08, fill=black](2.75,0)circle;
                             \node at (0,1) {\tiny{$d$}};
                             \node at (1.75, 1) {\tiny{$d-1$}};
                              \node at (3.75, 1) {\tiny{$d-1$}};
                             
                              \begin{scope}[thick,decoration={
                                             markings,
                                             mark=at position 0.5 with {\arrow{>}}}
                              ]
                              \draw[postaction={decorate}] (.75,0)--(.75,3);
                              \draw[postaction={decorate}] (2.75,0)--(2.75,3);
                              \end{scope}
                              \end{tikzpicture}}
                              +
                              \raisebox{-15pt}{\begin{tikzpicture}[scale=.4]
                              \draw[style=dashed, <->] (-1,0)--(4.75,0);
                              \draw[radius=.08, fill=black](.75,0)circle;
                              \draw[radius=.08, fill=black](2.75,0)circle;
                              \draw[radius=.08, fill=black](1.75,1.5)circle;
                              \draw[radius=.08, fill=black](1.75,2)circle;

                             \node at (0,1) {\tiny{$d$}};
                             \node at (1.75, .3) {\tiny{$d$}};
                              \node at (1.75, 3) {\tiny{$d-1$}};
                              \node at (3.75, 1) {\tiny{$d-1$}};
                             
                              \begin{scope}[thick,decoration={
                                             markings,
                                             mark=at position 0.5 with {\arrow{>}}}
                              ]
                              \draw[postaction={decorate}] (.75,0) to[out=90,in=225](1.75,1.5);
                              \draw[postaction={decorate}] (2.75,0)to[out=90,in=-45](1.75,1.5);
                               \draw[postaction={decorate}] (1.75,2)--(1.75,1.5);
                                \draw[postaction={decorate}] (1.75,2)--(.75,3);
                                 \draw[postaction={decorate}] (1.75,2)--(2.75,3);
                              \end{scope}
                              \end{tikzpicture}}
                              $  \\
                              & \\ 
&  {\bf Conclusion}: All reduced summands $\widetilde{w}$ of $\widehat{w}$ satisfy $\widetilde{w} \prec_S w'$ by Lemmas \ref{lem:shadowred} and \ref{lem:depthshad} \\ 
& \\ 
& \\ \hline
\end{tabular}}
\caption{Case analysis examples for Lemmas \ref{lem:edgeshadow0}, \ref{lem:edgeshadow1}, and \ref{lem:edgeshadow2}}\label{fig:twistcases}
\end{figure}

To show that $\phi$ is unitriangular with respect to the shadow partial ordering, we need something stronger than Lemma \ref{lem:edgeshadow2}: namely, we need to know that $s_i \cdot w$ is the sum of $w+w'$ plus reduced summands that are less than $w$ in shadow order, not just less than $w'$. We suspect this is always true and can prove it in various cases. However, there is an obstacle to a general proof: changing depth in one part of a web diagram can propagate to other parts of the diagram and create cascading sequences of unreduced webs.

Fortunately, in order to prove Theorem \ref{thm:coefficients} it suffices that this claim hold just for {\em some} of the edges coming into $w'$. The following lemma proves this claim for the smallest-indexed edge coming into each web $w'$ other than the minimal web $w_0$ (which has no edges coming into it).  In this case, we can control the changes to depth created by braiding according to $s_i$.  This is the substance of the analysis in the following proof. 

\begin{lemma}\label{lem:edgeshadow3}
Let $w'$ be a reduced web other than $w_0$ and consider the smallest $i$ such that there exists an edge $w\stackrel{s_i}{\longrightarrow} w'$ in the Hasse diagram for $\prec_T$. Then $$s_i \cdot w = w+w'+ \sum_{\{\widetilde{w}:\, \widetilde{w}\prec_S w\}} \widetilde{c}\widetilde{w}$$ where $\widetilde{c}\in \mathbb{Z}$.
\end{lemma}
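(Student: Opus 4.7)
The plan is to start from Lemma \ref{lem:edgeshadow2}, which already tells us that every reduced summand $\widetilde{w}$ of $s_i\cdot w$ satisfies $\widetilde{w}\preceq_S w'$, and to strengthen the weak inequality to $\widetilde{w}\prec_S w$ for every summand other than $w$ and $w'$. By the $\mathfrak{sl}_3$ smoothing rule of Figure \ref{fig:smoothings}, $s_i\cdot w = w + \widehat{w}$, where $\widehat{w}$ is obtained from $w$ by inserting an H-tangle at positions $i$ and $i+1$.

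The first step is to identify $w'$ among the reduced summands of $\widehat{w}$ with coefficient $+1$. In each of the three cases from Lemma \ref{lem:edge to bdry word}, a direct check shows that the H-insertion creates precisely the prescribed depth change between positions $i$ and $i+1$ (by $1$ in Cases 1 and 2 and by $2$ in Case 3) and agrees with $w$ at every other boundary interval, so $\widehat{w}$ has the same boundary depth profile as $w'$ and hence $S(\widehat{w}) = S(w')$ by Lemma \ref{lem:axisdepth}. Since reduced $\mathfrak{sl}_3$ webs are determined by their shadow (Remark \ref{rem:sl3undet}), $w'$ is the unique reduced summand whose shadow is $S(w')$; an explicit local reduction in the spirit of the calculations in Figure \ref{fig:twistcases} pins its coefficient to $+1$.

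The key remaining step is to show that every other reduced summand $\widetilde{w}$ satisfies $S(\widetilde{w})\subseteq S(w)$. By Corollary \ref{cor:TvsS}, no reduced web has shadow strictly between $S(w)$ and $S(w')$, so the only way $\widetilde{w}\prec_S w$ can fail is if $S(\widetilde{w})$ is incomparable to $S(w)$, meaning that at some point on the $x$-axis $\widetilde{w}$ has strictly greater depth than $w$. The minimality of $i$ is used to exclude this. Tracing the cascade of bigon- and square-reductions that resolve $\widehat{w}$, any depth increase at a point outside a tightly controlled neighborhood of $[i,i+1]$ forces the boundary word of $w$ to contain one of the patterns $-+$, $-0$, or $0+$ at a pair of positions $j,j+1$ with $j<i$; the resulting edge $\cdot\stackrel{s_j}{\longrightarrow}\overline{w}$ in the Hasse diagram, combined with the uniqueness of the target via Remark \ref{rem:sl3undet}, must land on $w'$, contradicting minimality.

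The main obstacle is making this tracing argument rigorous, because an $\mathfrak{sl}_3$ reduction can in principle cascade through the web and push depth changes well away from the insertion site. I would handle the three cases from Lemma \ref{lem:edge to bdry word} separately, using that each reduction step is a local application of a bigon- or square-relation and that the band diagram of each reduced summand is anchored by Proposition \ref{prop:sl3anch}; these facts together should confine the cascade of depth changes to a neighborhood of the insertion site controlled by the local structure at positions $i$ and $i+1$ and by the minimality hypothesis on $i$.
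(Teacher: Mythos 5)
Your reduction of the problem is sound as far as it goes: Lemma \ref{lem:edgeshadow2} plus Corollary \ref{cor:TvsS} do show that a reduced summand $\widetilde{w}\notin\{w,w'\}$ either satisfies $\widetilde{w}\prec_S w$ or has shadow incomparable to $S(w)$, so the whole content of the lemma is to rule out the incomparable case and to pin the coefficient of $w'$ at $+1$. But the mechanism you propose for using minimality of $i$ does not work. An occurrence of $-+$, $-0$, or $0+$ at positions $j,j+1$ with $j<i$ in the boundary word of $w$ produces an edge $w\stackrel{s_j}{\longrightarrow}\overline{w}$ whose target is the web of $s_j\cdot\psi(w)$; there is no reason for that target to be $w'$, and Remark \ref{rem:sl3undet} (uniqueness of a reduced web with a given shadow) does not force it to be. Since minimality is a statement about edges \emph{into} $w'$, no contradiction arises. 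Indeed such patterns genuinely occur to the left of $i$: the prefix $(+0-)^p$ contains $-+$ at each junction between consecutive blocks, so your criterion would ``contradict minimality'' even in legitimate configurations. The cascade-confinement step, which you correctly flag as the main obstacle, is therefore not established.

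What the paper actually does with minimality is quite different and is the missing idea: minimality forces the boundary word of $w'$ up to and including position $i+1$ into one of three explicit forms, $(+0-)^p(+0)^q(+)^r+0$, $(+0-)^p(+0)^q(+)^r+-$, or $(+0-)^p(+0)^q+0-$. This determines the local $M$-diagram structure at the insertion site precisely enough to identify the single face that could become a square after appending the crossing, and to show in each case either that it cannot become a square (so $\widehat{w}$ is already reduced and equals $w'$ by its boundary word), or that it does but its neighboring faces touch the boundary, so exactly one application of the square relation terminates the reduction, yielding $w'$ plus one web whose boundary depths are at most those of $w$. This simultaneously controls the cascade and fixes the coefficient of $w'$ at $+1$. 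Your alternative route to the coefficient --- $S(\widehat{w})=S(w')$ together with uniqueness of reduced webs with a given shadow --- only shows that \emph{at most one} reduced summand can equal $w'$; since $\widehat{w}$ is unreduced and Lemma \ref{lem:shadowred} gives only containment of shadows of summands, it does not show that $w'$ occurs at all, let alone with coefficient $1$. Both gaps require the explicit case analysis you deferred.
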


\begin{proof}
Let $w$ and $w'$ satisfy the hypotheses of the lemma, so $i,i+1$ are the first pair in the tableau for $w'$ in which $i$ is above and in a separate column from $i+1$.  (This is a necessary condition for there to be an edge labeled $s_i$ directed into $w'$ in the Hasse diagram for $\prec_T$.)  Since the tableau for $w'$ is standard, it is equivalent to the statement that $i$ is in a column further right than $i+1$.  This in turn is equivalent to $i+1$ being the first number inserted into a row that was previously skipped, in the following sense.  The boundary word for $w'$ up to and including the $(i+1)^{st}$ symbol must have one of these three forms:
\begin{itemize}
\item Case 1: $(+0-)^p(+0)^q(+)^{r}+0$ where $p\geq 0$, $q\geq 0$, and $r\geq 1$: the first row grows at least two entries longer than the second row, and then $i+1$ is inserted into the second row
\item Case 2: $(+0-)^p(+0)^q(+)^r+-$ where $p\geq 0$, $q\geq 1$, and $r\geq 0$: the first row grows at least two entries longer than the third row, and then $i+1$ is inserted into the third row (possibly to the left of the first available spot in the second row)
\item Case 3: $(+0-)^p(+0)^q+0-$ where $p\geq 0$ and $q\geq 1$: the first two rows are the same length, this length is at least two entries longer than the third row, and then $i+1$ is inserted into the third row
\end{itemize}
The boundary word for $w$ is the same as that for $w'$ except that the $i$ and $(i+1)^{st}$ symbols are exchanged. 

To complete the proof, we sketch the relevant parts of the $M$-diagram for $w$, the web $w$, and $s_i \cdot w$  in each case. We indicate the $i$ and $(i+1)^{st}$ endpoints in both $M$-diagrams and webs by large vertices.  The region marked with a star is a face that a priori might become a square after applying $s_i$ and thus merits attention.

First, consider a web $w$ with boundary word as indicated in Case 1. The web and its $M$-diagram have the following local structure.
$$\begin{tikzpicture}[scale=.4]
                              \draw[style=dashed, <->] (-2,0)--(6,0);
                              \draw[radius=.08, fill=black](0,0)circle;
                              \draw[radius=.15, fill=black](2,0)circle;
                               \draw[radius=.15, fill=black](4,0)circle;
                               \node at (0,-.75){$+$};
                             \node at (2,-.75){$0$};
                             \node at (4,-.75) {$+$};
                                                           \begin{scope}[thick,decoration={
                                             markings,
                                             mark=at position 0.75 with {\arrow{>}}}
                              ]
                              \draw[postaction={decorate}] (0,0)to[out=90,in=180](1,1) to[out=0,in=90] (2,0);
                              \draw[postaction={decorate}] (4,0)to[out=90,in=200](5,1);
                              \draw[postaction={decorate}]  (5,3) to[out=200,in=90] (2,0);
                        
                              \end{scope}
                              \end{tikzpicture} \hspace{.25in} \textup{ \raisebox{25pt}{$\mapsto$} }  \hspace{.25in}
                              \begin{tikzpicture}[scale=.4]
                              \draw[style=dashed, <->] (-2,0)--(6,0);
                              \draw[radius=.08, fill=black](0,0)circle;
                              \draw[radius=.15, fill=black](2,0)circle;
                               \draw[radius=.15, fill=black](4,0)circle;
                               \draw[radius=.08, fill=black](2,1)circle;
                               \node at (0,-.75){$+$};
                             \node at (2,-.75){$0$};
                             \node at (4,-.75) {$+$};
                                                    \node at (1,.5) {\tiny{$d$}};
                             \node at (3, .5) {\tiny{$d$}};
                             \node at (-1, .5) {\tiny{$d-1$}};
                              \node at (5.2, .5) {\tiny{$d+1$}};
                                \node at (4, 1.75) {\large{$*$}};
                                                           \begin{scope}[thick,decoration={
                                             markings,
                                             mark=at position 0.5 with {\arrow{>}}}
                              ]
                              \draw[postaction={decorate}] (0,0)to[out=90,in=180](2,1);
                              \draw[postaction={decorate}] (2,0)--(2,1);
                              \draw[postaction={decorate}] (4,0)to[out=90,in=200](5,1);
                              \draw[postaction={decorate}]  (5,3) to[out=200,in=45] (2,1);
                       
                              \end{scope}
                              \end{tikzpicture}$$
                              It is possible that $M$-diagram arcs incident to the $i$ and $(i+1)^{st}$ boundary vertices exiting to the right may cross.  If they do not cross, the starred region in $w$ necessarily has more than 4 edges (not including the boundary) or touches the boundary elsewhere.
                   
                              If the arcs cross, then the starred region has exactly 4 edges not including the boundary.

Consider $s_i\cdot w = w+\widehat{w}$ when the arcs do not cross. 
 $$s_i \cdot w = w+\widehat{w} = \raisebox{-10pt}{\begin{tikzpicture}[scale=.4]
                              \draw[style=dashed, <->] (-2,0)--(6,0);
                              \draw[radius=.08, fill=black](0,0)circle;
                              \draw[radius=.15, fill=black](2,0)circle;
                               \draw[radius=.15, fill=black](4,0)circle;
                               \draw[radius=.08, fill=black](2,1)circle;

                                                    \node at (1,.5) {\tiny{$d$}};
                                                    \node at (-1, .5) {\tiny{$d-1$}};
                             \node at (3, .5) {\tiny{$d$}};
                              \node at (5.2, .5) {\tiny{$d+1$}};
                                \node at (4, 1.75) {\large{$*$}};
                                                           \begin{scope}[thick,decoration={
                                             markings,
                                             mark=at position 0.5 with {\arrow{>}}}
                              ]
                              \draw[postaction={decorate}] (0,0)to[out=90,in=180](2,1);
                              \draw[postaction={decorate}] (2,0)--(2,1);
                              \draw[postaction={decorate}] (4,0)to[out=90,in=200](5,1);
                              \draw[postaction={decorate}]  (5,3) to[out=200,in=45] (2,1);
                        
                              \end{scope}
                              \end{tikzpicture}}+\raisebox{-23pt}{\begin{tikzpicture}[scale=.4]
                              \draw[style=dashed, <->] (-2,0)--(6,0);
                              \draw[radius=.08, fill=black](0,0)circle;
                              \draw[radius=.15, fill=black](2,0)circle;
                               \draw[radius=.15, fill=black](4,0)circle;
                               \draw[radius=.08, fill=black](2,2)circle;
                                \draw[radius=.08, fill=black](3,1)circle;
                                 \draw[radius=.08, fill=black](3,1.5)circle;
                               \node at (0,-.75){$+$};
                             \node at (2,-.75){$+$};
                             \node at (4,-.75) {$0$};
                                                    \node at (1,.5) {\tiny{$d$}};
                                                    \node at (-1, .5) {\tiny{$d-1$}};
                             \node at (3, 2.25) {\tiny{$d$}};
                              \node at (3, .25) {\tiny{$d+1$}};
                              \node at (5.2, .5) {\tiny{$d+1$}};
                                \node at (4, 2.5) {\large{$*$}};
                                                           \begin{scope}[thick,decoration={
                                             markings,
                                             mark=at position 0.5 with {\arrow{>}}}
                              ]
                              \draw[postaction={decorate}] (0,0)to[out=90,in=180](2,2);
                              \draw[postaction={decorate}] (2,0) to[out=90,in=200] (3,1);
                              \draw[postaction={decorate}] (3,1.5)--(2,2);
                               \draw[postaction={decorate}] (3,1.5)--(3,1);
                                \draw[postaction={decorate}] (4,0) to[out=90,in=-20](3,1);
                              \draw[postaction={decorate}] (3,1.5)to[out=60,in=160](5,2);
                              \draw[postaction={decorate}]  (5,3) to[out=180,in=45] (2,2);
                        
                              \end{scope}
                              \end{tikzpicture}}$$
 In this subcase $\widehat{w}$ is reduced (since the starred region is not a square or bigon) and has a boundary word identical to $w'$. We conclude $\widehat{w}=w'$ and $s_i \cdot w = w+w'$. 
 
 Next, consider the structure of the web $\widehat{w}$ if the arcs incident to vertices $i$ and $i+1$ do cross in the $M$-diagram for $w$. Then $\widehat{w}$ can be reduced as follows
  $$\widehat{w} = 
\raisebox{-10pt}{\begin{tikzpicture}[scale=.4]
                              \draw[style=dashed, <->] (-2,0)--(7,0);
                              \draw[radius=.08, fill=black](0,0)circle;
                              \draw[radius=.15, fill=black](2,0)circle;
                               \draw[radius=.15, fill=black](4,0)circle;
                               \draw[radius=.08, fill=black](2,2)circle;
                                \draw[radius=.08, fill=black](3,1)circle;
                                 \draw[radius=.08, fill=black](3,1.5)circle;
                                 \draw[radius=.08, fill=black](5,2)circle;
                                 \draw[radius=.08, fill=black](5,3)circle;
                                                    \node at (1,.5) {\tiny{$d$}};
                                                    \node at (-1, .5) {\tiny{$d-1$}};
                             \node at (3, 2.25) {\tiny{$d$}};
                              \node at (3, .25) {\tiny{$d+1$}};
                              \node at (5.2, .5) {\tiny{$d+1$}};
                               \node at (6, 3) {\tiny{$d$}};
                                \node at (4, 2.5) {\large{$*$}};
                                                           \begin{scope}[thick,decoration={
                                             markings,
                                             mark=at position 0.5 with {\arrow{>}}}
                              ]
                              \draw[postaction={decorate}] (0,0)to[out=90,in=180](2,2);
                              \draw[postaction={decorate}] (2,0) to[out=90,in=200] (3,1);
                              \draw[postaction={decorate}] (3,1.5)--(2,2);
                               \draw[postaction={decorate}] (3,1.5)--(3,1);
                                \draw[postaction={decorate}] (5,3)--(5,2);
                                \draw[postaction={decorate}] (5,3)--(6.5,3.75);
                                \draw[postaction={decorate}] (6.5,2.75)--(5,2);
                                \draw[postaction={decorate}] (4,0) to[out=90,in=-20](3,1);
                              \draw[postaction={decorate}] (3,1.5)to[out=60,in=160](5,2);
                              \draw[postaction={decorate}]  (5,3) to[out=180,in=45] (2,2);

                              \end{scope}
                              \end{tikzpicture}}=
                              \raisebox{-23pt}{\begin{tikzpicture}[scale=.4]
                              \draw[style=dashed, <->] (-2,0)--(7,0);
                              \draw[radius=.08, fill=black](0,0)circle;
                              \draw[radius=.15, fill=black](2,0)circle;
                               \draw[radius=.15, fill=black](4,0)circle;
                                \draw[radius=.08, fill=black](3,1)circle;
                               \node at (0,-.75){$+$};
                             \node at (2,-.75){$+$};
                             \node at (4,-.75) {$0$};
                                                    \node at (1,.5) {\tiny{$d$}};                                         
                                                     \node at (-1, .5) {\tiny{$d-1$}};
                             \node at (3, 2.5) {\tiny{$d$}};
                              \node at (3, .25) {\tiny{$d+1$}};
                              \node at (5.2, .5) {\tiny{$d+1$}};

                                                           \begin{scope}[thick,decoration={
                                             markings,
                                             mark=at position 0.5 with {\arrow{>}}}
                              ]
                              \draw[postaction={decorate}] (0,0)to[out=90,in=160](6.5,3.75);
                              \draw[postaction={decorate}] (2,0) to[out=90,in=200] (3,1);
                                \draw[postaction={decorate}] (4,0) to[out=90,in=-20](3,1);
                              \draw[postaction={decorate}] (6.5,2.75)to[out=160,in=90](3,1);
                              \end{scope}
                              \end{tikzpicture}}
                              +
                              \raisebox{-10pt}{\begin{tikzpicture}[scale=.4]
                              \draw[style=dashed, <->] (-2,0)--(7,0);
                             \draw[radius=.08, fill=black](0,0)circle;
                              \draw[radius=.15, fill=black](2,0)circle;
                               \draw[radius=.15, fill=black](4,0)circle;
                                \draw[radius=.08, fill=black](3,1)circle;
                                                    \node at (1,.5) {\tiny{$d$}};
                                                    \node at (-1, .5) {\tiny{$d-1$}};
                             \node at (3, 2.25) {\tiny{$d-1$}};
                              \node at (3, .5) {\tiny{$d$}};

                                                           \begin{scope}[thick,decoration={
                                             markings,
                                             mark=at position 0.5 with {\arrow{>}}}
                              ]
                              \draw[postaction={decorate}] (0,0)to[out=90,in=180](1.5,1.5) to[out=0,in=90] (3,1);
                              \draw[postaction={decorate}] (2,0) to[out=90,in=200] (3,1);
                
                                \draw[postaction={decorate}] (4,0) to[out=90,in=-20](3,1);
         \draw[postaction={decorate}] (6.5,2.75) to[out=165,in=270] (6,3.25) to[out=90,in=195] (6.5,3.75);
                       
                              \end{scope}
                              \end{tikzpicture}}
                              $$
In this subcase, the first summand in $\widehat{w}$ is reduced because the only faces that were modified lie on the boundary; moreover, its boundary matches $w'$. By Remark \ref{rem:sl3undet}, the boundary word uniquely determines a reduced web, so we conclude the summand is $w'$.   The second summand is a possibly-unreduced web; the depths in those regions are indicated, and the depths along the boundary are at most those in $w$. Therefore by Lemma \ref{lem:shadowred} we obtain $s_i \cdot w = w+w'+ \sum_{\{\widetilde{w}:\widetilde{w}\prec_S w\}} \widetilde{c} \widetilde{w}$ where $\widetilde{c}\in \mathbb{Z}$. 

Now, consider a web $w$ with boundary word of the form given in Case 2. The following is a local picture of both its $M$-diagram and the web itself.
 $$\begin{tikzpicture}[scale=.4]
                              \draw[style=dashed, <->] (-1,0)--(7,0);
  
                              \draw[radius=.15, fill=black](2,0)circle;
                               \draw[radius=.15, fill=black](4,0)circle;
                               \node at (1,.5){\tiny{$d$}};
                                \node at (3,.5){\tiny{$d-1$}};
                                 \node at (3,2){\Large{*}};
                                  \node at (5,.5){\tiny{$d$}};
                             \node at (2,-.75){$-$};
                             \node at (4,-.75) {$+$};
                                                           \begin{scope}[thick,decoration={
                                             markings,
                                             mark=at position 0.75 with {\arrow{>}}}
                              ]
                              \draw[postaction={decorate}] (2,0)to[out=90,in=0](0,2);
                              \draw[postaction={decorate}] (4,0)to[out=90,in=180](6,2);

                              \end{scope}
                              \end{tikzpicture}$$
                              By construction, no arc of the $M$-diagram for $w$ can intersect both of the arcs shown, since arcs between numbers on the first two rows of the tableau only intersect arcs between numbers on the second two rows of the tableau. Thus the starred region in this case will have more than 4 edges (not counting the boundary) or will touch the boundary elsewhere. Arguing as in the first subcase of Case 1, we conclude that $s_i \cdot w = w+\widehat{w}$ where $\widehat{w}$ is reduced and $\widehat{w}=w'$.
                              
Finally, consider a web $w$ with boundary word as indicated in Case 3. Local pictures of the $M$-diagram for $w$ and of $w$ are again given below. Note that the starred region in $w$ touches the boundary, as deduced from the $M$-diagram (which shows vertices $i-3,i-2,\ldots,i+1$).  This will be important in what follows. 
$$\begin{tikzpicture}[scale=.4]
                              \draw[style=dashed, <->] (-6,0)--(6,0);
                              \draw[radius=.08, fill=black](0,0)circle;
                              \draw[radius=.15, fill=black](2,0)circle;
                               \draw[radius=.08, fill=black](-4,0)circle;
                                \draw[radius=.08, fill=black](-2,0)circle;
                               \draw[radius=.15, fill=black](4,0)circle;
                               \node at (0,-.75){$+$};
                             \node at (2,-.75){$-$};
                             \node at (4,-.75) {$0$};
                               \node at (-2,-.75){$0$};
                             \node at (-4,-.75) {$+$};
                                                           \begin{scope}[thick,decoration={
                                             markings,
                                             mark=at position 0.5 with {\arrow{>}}}
                              ]
                              \draw[postaction={decorate}] (0,0)to[out=90,in=180](2,2) to[out=0,in=90] (4,0);
                                \draw[postaction={decorate}] (-4,0)to[out=90,in=180](-3,1) to[out=0,in=90] (-2,0);
                              \draw[postaction={decorate}] (6,2)to[out=180,in=90](4,0);
                              \draw[postaction={decorate}]  (2,0) to[out=90,in=0] (0,2) to[out=180,in=90] (-2,0);
                       
                              \end{scope}
                              \end{tikzpicture} \hspace{.25in} \textup{ \raisebox{25pt}{$\mapsto$} }  \hspace{.25in}
                              \begin{tikzpicture}[scale=.4]
                              \draw[style=dashed, <->] (-2,0)--(6,0);
                              \draw[radius=.08, fill=black](0,0)circle;
                              \draw[radius=.08, fill=black](1,1)circle;
                              \draw[radius=.08, fill=black](1,1.5)circle;
                              \draw[radius=.15, fill=black](2,0)circle;
                               \draw[radius=.15, fill=black](4,0)circle;
                               \draw[radius=.08, fill=black](4,1)circle;
                               \node at (0,-.75){$+$};
                             \node at (2,-.75){$-$};
                             \node at (4,-.75) {$0$};
                                                    \node at (1,.5) {\tiny{$d$}};
                                                    \node at (1,2.25) {\Large{*}};
                             \node at (3, .5) {\tiny{$d-1$}};
                             \node at (-1, .5) {\tiny{$d-1$}};
                              \node at (5.2, .5) {\tiny{$d-1$}};
                                \node at (3.5, 2) {\tiny{$d-2$}};
                                                           \begin{scope}[thick,decoration={
                                             markings,
                                             mark=at position 0.5 with {\arrow{>}}}
                              ]
                              \draw[postaction={decorate}] (0,0)to[out=90,in=200](1,1);
                              \draw[postaction={decorate}] (2,0)to[out=90,in=-20](1,1);
                               \draw[postaction={decorate}] (1,1.5)--(1,1);
                                 \draw[postaction={decorate}] (1,1.5)--(0,2);
                                   \draw[postaction={decorate}] (1,1.5) to[out=0,in=160] (4,1);
                              \draw[postaction={decorate}] (4,0)--(4,1);
                               \draw[postaction={decorate}] (6,2)to[out=180,in=45](4,1);
                        
                              \end{scope}
                              \end{tikzpicture}$$
                             Acting on $w$ by $s_i$ gives $s_i\cdot w = w+\widehat{w}$ where $\widehat{w}$ reduces as shown below.
                             $$ \widehat{w} = \raisebox{-10pt}{  \begin{tikzpicture}[scale=.4]
                              \draw[style=dashed, <->] (-2,0)--(6,0);
                              \draw[radius=.08, fill=black](0,0)circle;
                              \draw[radius=.08, fill=black](1,2.5)circle;
                              \draw[radius=.08, fill=black](1,1.5)circle;
                              \draw[radius=.15, fill=black](1.75,0)circle;
                               \draw[radius=.15, fill=black](4.25,0)circle;
                               \draw[radius=.08, fill=black](4,2.5)circle;
                               \draw[radius=.08, fill=black](3,1)circle;
                               \draw[radius=.08, fill=black](3,1.5)circle;
                    
                                                    \node at (1,.5) {\tiny{$d$}};
                                                      \node at (1.25,3) {\Large{*}};
                             \node at (3, .35) {\tiny{$d$}};
                             \node at (-1, .5) {\tiny{$d-1$}};
                              \node at (2, 2) {\tiny{$d-1$}};
                              \node at (4.5, 1.5) {\tiny{$d-1$}};
                                \node at (3.75, 3) {\tiny{$d-2$}};
                                                           \begin{scope}[thick,decoration={
                                             markings,
                                             mark=at position 0.5 with {\arrow{>}}}
                              ]
                              \draw[postaction={decorate}] (0,0)to[out=90,in=200](1,1.5);
                           \draw[postaction={decorate}] (1.75,0) to[out=90,in=200] (3,1);
                               \draw[postaction={decorate}] (1,2.5)--(1,1.5);
                                 \draw[postaction={decorate}] (1,2.5)--(0,2.5);
                                   \draw[postaction={decorate}] (1,2.5) to[out=0,in=160] (4,2.5);
                              \draw[postaction={decorate}] (4.25,0) to[out=90,in=-20] (3,1);
           \draw[postaction={decorate}] (3,1.5)--(3,1);
            \draw[postaction={decorate}] (3,1.5)--(4,2.5);
             \draw[postaction={decorate}] (3,1.5)--(1,1.5);
                               \draw[postaction={decorate}] (6,2)to[out=160,in=0](4,2.5);
                        
                              \end{scope}
                              \end{tikzpicture}}=
                               \raisebox{-23pt}{  \begin{tikzpicture}[scale=.4]
                              \draw[style=dashed, <->] (-2,0)--(6,0);
                              \draw[radius=.08, fill=black](-.25,0)circle;
                              \draw[radius=.15, fill=black](1.75,0)circle;
                               \draw[radius=.15, fill=black](4.25,0)circle;
                               \draw[radius=.08, fill=black](3,1)circle;

                               \node at (-.25,-.75){$+$};
                             \node at (1.75,-.75){$0$};
                             \node at (4.25,-.75) {$-$};
                                                    \node at (3,.35) {\tiny{$d-1$}};
                             \node at (1, .5) {\tiny{$d-1$}};
                              \node at (2, 2) {\tiny{$d-2$}};
                      
                                                           \begin{scope}[thick,decoration={
                                             markings,
                                             mark=at position 0.5 with {\arrow{>}}}
                              ]
                              \draw[postaction={decorate}] (6,2)to[out=160,in=0](-.5,3);
                           \draw[postaction={decorate}] (1.75,0) to[out=90,in=200] (3,1);

                              \draw[postaction={decorate}] (4.25,0) to[out=90,in=-20] (3,1);

                               \draw[postaction={decorate}] (-.25,0)to[out=90,in=90](3,1);
                        
                              \end{scope}
                              \end{tikzpicture}}
                              +
                              \raisebox{-23pt}{  \begin{tikzpicture}[scale=.4]
                              \draw[style=dashed, <->] (-2,0)--(6,0);
                              \draw[radius=.08, fill=black](0,0)circle;
                              \draw[radius=.15, fill=black](1.75,0)circle;
                               \draw[radius=.15, fill=black](4.25,0)circle;
                               \draw[radius=.08, fill=black](3,1)circle;

                               \node at (0,-.75){$-$};
                             \node at (1.75,-.75){$+$};
                             \node at (4.25,-.75) {$0$};
                                                    \node at (3,.35) {\tiny{$d-1$}};
                             \node at (-1, .5) {\tiny{$d-1$}};
                              \node at (2, 2) {\tiny{$d-2$}};
                              \node at (4.5, 1.5) {\tiny{$d-1$}};
                                                           \begin{scope}[thick,decoration={
                                             markings,
                                             mark=at position 0.5 with {\arrow{>}}}
                              ]
                              \draw[postaction={decorate}] (0,0)to[out=60,in=0](-.5,3);
                           \draw[postaction={decorate}] (1.75,0) to[out=90,in=200] (3,1);

                              \draw[postaction={decorate}] (4.25,0) to[out=90,in=-20] (3,1);

                               \draw[postaction={decorate}] (6,2)to[out=180,in=90](3,1);
                        
                              \end{scope}
                              \end{tikzpicture}}$$
                              Since all regions adjacent to the square in $\widehat{w}$ also meet the boundary, we see that $\widehat{w}$ splits into the sum of  two reduced webs, one of which is $w'$. Hence $s_i \cdot w = w+w'+\widetilde{w}$ where $\widetilde{w} \prec_S w$ by analyzing depths along the boundary.
\end{proof}

Lemmas \ref{lem:edgeshadow2}, \ref{lem:edgeshadow3}, and \ref{lem:shadowprop} already begin to give a sense of the relationships between the partial orders $\prec_S$ and $\prec_T$.  We have shown that for certain edges $w \stackrel{s_i}{\longrightarrow} w'$ in the Hasse diagram for $\prec_T$ the web $s_i \cdot w$ decomposes into a sum of $w+w'$ plus reduced summands in the shadow of $w'$.  In order to induct, we need to bound the shadow obtained by applying another (possibly different) simple transposition $s_j$ to each of these terms.  This is what the following lemma does.  It is analogous to the ``diamond lemma" in algebraic combinatorics \cite[Proposition 2.27]{MR2133266}, which proves that in the Hasse diagram for Bruhat order, any three of four edges in a diamond shape imply the fourth edge. (The diamond shape is exactly like that shown in Figure \ref{fig:diamond} for the Hasse diagram for $\prec_S$.  See also \cite{housley2015robinson} for a similar diamond lemma for $\mathfrak{sl}_2$ webs.)  This lemma is also the core of the inductive step in Theorem \ref{thm:coefficients}. 

\begin{lemma}\label{lem:shadowprop}
Let $w, w'$, and $\overline{w}$ be reduced webs such that $\overline{w}\stackrel{s_i}{\longrightarrow} w'$ is an edge in the Hasse diagram for $\prec_T$.  If $w\prec_S \overline{w}$ and $\widetilde{w}$ is a reduced summand of $s_i \cdot w$ then also $\widetilde{w}\prec_S w'$.
\end{lemma}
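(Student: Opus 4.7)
Plan: The aim is to show that for every reduced summand $\widetilde w$ of $s_i \cdot w$ and every boundary point $p$, the depth of $\widetilde w$ at $p$ is at most the depth of $w'$ at $p$; by Lemma \ref{lem:depthshad}, this is equivalent to $\widetilde w \prec_S w'$. First I would combine the two hypotheses. Corollary \ref{cor:TvsS} applied to $\overline{w} \stackrel{s_i}{\longrightarrow} w'$ gives $\overline{w} \prec_S w'$, and transitivity with $w \prec_S \overline{w}$ yields $w \prec_S w'$. Next I would observe that the smoothing rules of Figure \ref{fig:smoothings} are local: both smoothings modify $w$ only in an arbitrarily small neighborhood of vertices $i$ and $i+1$, so each summand of $s_i \cdot w$ has the same boundary depths as $w$ at every integer boundary point outside $(i,i+1)$. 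Lemma \ref{lem:shadowred} then forces $\mathrm{depth}_{\widetilde w}(p) \leq \mathrm{depth}_w(p) \leq \mathrm{depth}_{w'}(p)$ for every such $p$, leaving only the interval $(i,i+1)$ to analyze.

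Next I would split on the boundary symbols of $w$ at positions $i,i+1$ according to Lemmas \ref{lem:edgeshadow0}, \ref{lem:edgeshadow1}, and \ref{lem:edgeshadow2}. If the pair is $+-,\ +0,\ 0-,\ ++,\ 00,$ or $--$, then those two lemmas give $\widetilde w \preceq_S w$, so
$\mathrm{depth}_{\widetilde w}(i,i+1) \leq \mathrm{depth}_w(i,i+1) \leq \mathrm{depth}_{\overline{w}}(i,i+1) < \mathrm{depth}_{w'}(i,i+1)$,
the final strict inequality being the depth jump at $(i,i+1)$ recorded in Lemma \ref{lem:edge to bdry word}.

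The substantive case is that $w$ has $-+$, $-0$, or $0+$ at positions $i,i+1$. Then Lemma \ref{lem:edgeshadow2} produces a Hasse edge $w \stackrel{s_i}{\longrightarrow} w_{\mathrm{new}}$ with every reduced summand $\widetilde w$ of $s_i\cdot w$ satisfying $\widetilde w \preceq_S w_{\mathrm{new}}$, so it suffices to show the single numerical inequality $\mathrm{depth}_{w_{\mathrm{new}}}(i,i+1) \leq \mathrm{depth}_{w'}(i,i+1)$. Lemma \ref{lem:edge to bdry word} rewrites the two sides as $\mathrm{depth}_w(i,i+1)+\delta_w$ and $\mathrm{depth}_{\overline{w}}(i,i+1)+\delta_{\overline{w}}$, where each $\delta\in\{1,2\}$ equals $2$ exactly when the corresponding boundary word is $-+$. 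The hypothesis $w \prec_S \overline{w}$ provides depth inequalities at the nearby boundary points $i-\tfrac12,\ i+\tfrac12,\ i+\tfrac32$, and a short check of the nine sub-cases (three boundary words for $w$ times three for $\overline{w}$) shows that in each one at least one of these three inequalities absorbs the discrepancy $\delta_w-\delta_{\overline{w}}$. The only delicate sub-cases have $\delta_w=2,\ \delta_{\overline{w}}=1$: if $\overline{w}$ has the word $0+$ the inequality at $i-\tfrac12$ closes the gap, and if $\overline{w}$ has $-0$ the inequality at $i+\tfrac32$ does.

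The main obstacle is the bookkeeping in this final case: one must track the boundary words of $w$ and $\overline{w}$ simultaneously, remembering that they need not match locally at $i,i+1$, and verify in each sub-case that the depth slack from $w \prec_S \overline{w}$ at $i\pm\tfrac12$ or $i+\tfrac32$ is enough to absorb the extra unit of depth jump created when $\delta_w > \delta_{\overline{w}}$. Strictness of the conclusion $\widetilde w \prec_S w'$ then follows because some boundary point at which $w \prec_S \overline{w}$ is strict must persist as a point at which $\widetilde w \prec_S w'$ is strict, as a short inspection in each sub-case confirms.
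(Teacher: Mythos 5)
Your proposal is correct and follows essentially the same route as the paper: reduce to the covering relation via Corollary \ref{cor:TvsS} and transitivity, dispatch the six ``non-edge'' boundary-symbol cases with Lemmas \ref{lem:edgeshadow0} and \ref{lem:edgeshadow1}, and in the remaining case establish the diamond $w_{\mathrm{new}}\preceq_S w'$ by comparing the depth jumps of Lemma \ref{lem:edge to bdry word} at $(i,i+1)$, using the depth constraints at the neighboring boundary points exactly as the paper does in its ``$e=d$ with word $-+$'' subcase. One caveat: your locality claim that every summand of $s_i\cdot w$ has the \emph{same} boundary depths as $w$ outside $(i,i+1)$ is false as stated (in the $++$ case of Figure \ref{fig:twistcases} the depth just to the right of $i+1$ drops by one), but this is harmless because the inequalities you actually need away from $(i,i+1)$ are already supplied by Lemmas \ref{lem:edgeshadow0}--\ref{lem:edgeshadow2} together with Lemma \ref{lem:edge to bdry word}.
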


\begin{proof}
The edges $\overline{w}\stackrel{s_i}{\longrightarrow} w'$ are the covering relations in the poset $\prec_T$ so $\overline{w}\prec_T w'$.  Corollary \ref{cor:TvsS} then implies that $\overline{w}\prec_S w'$. Since both $w\prec_S \overline{w}$ and $\overline{w}\prec_S w'$ we know $w\prec_S w'$. Lemmas \ref{lem:edgeshadow0} and \ref{lem:edgeshadow1} showed that if the boundary symbols for $w$ in the $i$ and $(i+1)^{t}$ positions are $+0, +-, 0-, ++, 00$, or $--$, then all reduced summands $\widetilde{w}$ of $s_i\cdot w$ satisfy $\widetilde{w}\preceq_S w$. Since $w\prec_S w'$ we conclude $\widetilde{w}\prec_S w'$ in all of these cases as desired.

For the remainder of the proof, assume the $i$ and $(i+1)^{st}$ boundary symbols for $w$ are $-+$, $0-$, or $0+$, namely that $w$ has an edge labeled $s_i$ coming out of it. Let $w''$ denote the reduced web with $w \stackrel{s_i}{\longrightarrow} w''$ in the Hasse diagram for $\prec_T$.  In this case Lemma \ref{lem:edgeshadow2} applies and shows that all reduced summands $\widetilde{w}$ of $s_i\cdot w$ satisfy $\widetilde{w}\preceq_S w''$.  It only remains to show $w''\prec_S w'$ in Figure \ref{fig:diamond}.
\begin{figure}[h]
\begin{center}
\begin{tikzpicture}[scale=.5]
\node at (0,4) {$w$};
\node at (2,4) {\Large{$\prec_S$}};
\node at (4,4) {$ \overline{w}$}; 
\node at (0,0) {$w''$};
\node at (2,0) {??};
\node at (4,0) {$w'$};
\node at (-.5,2) {$s_i$};
\draw[->] (0,3.5)--(0,.5);
\node at (4.5,2) {$s_i$};
\draw[->] (4,3.5)--(4,.5);
\end{tikzpicture}
\end{center}
\caption{Diamond fragment in the partial order $\prec_S$} \label{fig:diamond}
\end{figure}
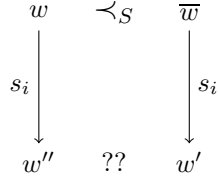

Lemma \ref{lem:edge to bdry word} proved that the boundary words for $w$ and $w''$ differ only at the $i$ and $(i+1)^{st}$ symbols, and the depths along the boundary differ only in the interval $(i,i+1)$.  Figure \ref{fig:yword} showed the depth between $i$ and $i+1$  increases by 1 from $w$ to $w''$ if the boundary word for $w$ is $0+$ or $-0$ in the $i$ and $(i+1)^{st}$ positions and increases by 2 if the boundary word is $-+$. The boundary depths in $\overline{w}$ and $w'$ are similarly constrained since there is also an edge $\overline{w}\stackrel{s_i}{\longrightarrow} w'$.

Suppose the depth at $i+\frac{1}{2}$ in $w$ is $d$ and in $\overline{w}$ is $e$.  We know $w \prec_S \overline{w}$ so $e \geq d$.  If $e > d$ then the depth at $i+\frac{1}{2}$ in $w'$ is at least $e+1$ and the depth at $i+\frac{1}{2}$ is at most $d+2 \leq e+1$ so $w'' \prec_S w'$ by Lemma \ref{lem:depthshad}.  If $e=d$ and the depth at $i+\frac{1}{2}$ in $w''$ is $d+1$ then again by Lemma \ref{lem:depthshad} we have $w'' \prec_S w'$.

Finally suppose $e=d$ and the depth at $i+\frac{1}{2}$ in $w''$ is $d+2$.  This means we are in Case 3 of Figure \ref{fig:yword} and so the boundary word for $w$ at the $i$ and $(i+1)^{st}$ positions is $-+$. It follows that the depths between $i-1$ and $i$ as well as between $i+1$ and $i+2$ are $d+1$ in $w$ and consequently $w''$. Since $w\prec_S \overline{w}$ we know that the depths in $\overline{w}$ between $i-1$ and $i$ as well as between $i+1$ and $i+2$ are at least $d+1$. We assumed  the depth between $i$ and $i+1$ in $\overline{w}$ is $d$ so the boundary word for $\overline{w}$ at the $i$ and $(i+1)^{st}$ positions must also be $-+$. This means the depth between $i$ and $i+1$ in $w'$ is $d+2$. It follows that $w''\prec_S w'$ as desired, proving the claim.
\end{proof}

Finally, we put these claims together to obtain the following theorem proving that the transition matrix from the Specht basis to $\mathfrak{sl}_3$ web basis is unitriangular with respect to the shadow partial order.

\begin{theorem}\label{thm:coefficients}
Let $T'\in SYT(n,n,n)$ and $w'$ be the reduced $\mathfrak{sl}_3$ web corresponding to $T'$. Then $$\phi(v_{T'}) = w' + \sum_{\widetilde{w}\in \mathcal{W}_{3n}^R} c_{\widetilde{w}}^{w'} \widetilde{w}$$ where $c_{\widetilde{w}}^{w'}= 0$ whenever $\widetilde{w} \not\prec_S w'$.
\end{theorem}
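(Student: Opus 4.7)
The plan is to induct on the rank of $T'$ in the tableau partial order $\prec_T$. The base case is handled directly by Lemma \ref{lem:basecase}: when $T' = T_0$ is the column-filled tableau, $\phi(v_{T_0}) = w_0$ (after rescaling), and the sum is empty. Note also that $w_0$ is the unique minimal element of $\prec_T$, so this is indeed the only web with no predecessors.

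For the inductive step, fix $T' \neq T_0$ and let $w' = \psi^{-1}(T')$. Choose the smallest $i$ such that some edge $T \stackrel{s_i}{\longrightarrow} T'$ appears in the Hasse diagram of $\prec_T$; let $w = \psi^{-1}(T)$ so that $w \stackrel{s_i}{\longrightarrow} w'$ is the corresponding edge on webs. Since $T \prec_T T'$, the inductive hypothesis yields
\[
\phi(v_T) \;=\; w \;+\; \sum_{\widetilde{w} \prec_S w} c_{\widetilde{w}}^{w}\, \widetilde{w}.
\]
Now use $S_{3n}$-equivariance of $\phi$ together with Proposition \ref{prop:tabact} to write
\[
\phi(v_{T'}) \;=\; \phi(s_i \cdot v_T) \;=\; s_i \cdot \phi(v_T) \;=\; s_i \cdot w \;+\; \sum_{\widetilde{w} \prec_S w} c_{\widetilde{w}}^{w}\, (s_i \cdot \widetilde{w}).
\]

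The key step is to control the shadows appearing in each of these two pieces. For the first term, Lemma \ref{lem:edgeshadow3} (which is precisely why we chose $i$ minimal with respect to incoming edges at $w'$) gives
\[
s_i \cdot w \;=\; w \;+\; w' \;+\; \sum_{\widetilde{w} \prec_S w} \widetilde{c}\, \widetilde{w}.
\]
Since $w \prec_T w'$, Corollary \ref{cor:TvsS} yields $w \prec_S w'$, and transitivity of $\prec_S$ places the remaining $\widetilde{w}$ strictly below $w'$ in shadow order as well. For the second term, each $\widetilde{w}$ satisfies $\widetilde{w} \prec_S w$ with $w \stackrel{s_i}{\longrightarrow} w'$ an edge of $\prec_T$, so Lemma \ref{lem:shadowprop} implies that every reduced summand of $s_i \cdot \widetilde{w}$ lies strictly below $w'$ in $\prec_S$. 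Assembling these, every term other than $w'$ itself is $\prec_S w'$, and the coefficient of $w'$ is exactly $1$ because only $s_i \cdot w$ produces $w'$ (all other contributions are strictly below $w'$ and hence distinct from $w'$).

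The main obstacle is ensuring the coefficient of $w'$ comes out to precisely $1$, which hinges on having a covering relation $w \stackrel{s_i}{\longrightarrow} w'$ for which the cascade of web reductions in $s_i \cdot w$ does not spawn additional copies of $w'$. This is exactly what Lemma \ref{lem:edgeshadow3} guarantees for the minimal-index incoming edge, and it is precisely why the lemma was proved with that careful case analysis. Once Lemma \ref{lem:edgeshadow3} is in hand, together with Lemma \ref{lem:shadowprop}, the induction is essentially mechanical.
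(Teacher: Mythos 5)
Your proposal is correct and follows essentially the same route as the paper's proof: induction on rank with Lemma \ref{lem:basecase} as base case, the minimal-index incoming edge $w\stackrel{s_i}{\longrightarrow}w'$, equivariance plus Proposition \ref{prop:tabact}, Lemma \ref{lem:edgeshadow3} to handle $s_i\cdot w$, and Lemma \ref{lem:shadowprop} to handle $s_i\cdot\widetilde{w}$ for the lower-order terms. Your explicit remark that $w\prec_S w'$ (via Corollary \ref{cor:TvsS}) so that the summand $w$ itself is absorbed below $w'$ is a small point the paper leaves implicit, but the argument is the same.
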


\begin{proof}
We prove this theorem by induction on $r(T')$. If $r(T')=0$ then $T'=T_0$. Lemma \ref{lem:basecase} shows $\phi(v_{T_0}) = w_0$ and the base case is proven. 

Assume the theorem holds for all tableaux of rank $k$. Consider $T'\in SYT(n,n,n)$ such that $r(T')=k+1$. Choose the smallest value $i$ such that there exists a reduced web $w$ and an edge $w\stackrel{s_i}{\longrightarrow} w'$ in the Hasse diagram for $\prec_T$ and let $T$ be the tableau corresponding to $w$. 

Proposition \ref{prop:tabact} stated that $s_i\cdot v_{T} = v_{T'}$. Since $\phi$ is equivariant with respect to the symmetric group, it follows that $\phi(v_{T'}) = \phi(s_i \cdot v_T) = s_i \cdot \phi(v_T)$. Since $r(T) = k$ the inductive assumption tells us that  $$\phi(v_{T'}) = s_i \cdot \phi(v_T)=s_i\cdot \left(w+\sum_{\widetilde{w}\in \mathcal{W}_{3n}^R} c_{\widetilde{w}}^{w} \widetilde{w}\right) = s_i \cdot w + \sum_{\widetilde{w}\in \mathcal{W}_{3n}^R} c_{\widetilde{w}}^{w} (s_i\cdot \widetilde{w})$$ where $c_{\widetilde{w}}^{w}= 0$ whenever $\widetilde{w} \not\prec_S w$.

First consider $s_i \cdot w$. By Lemma \ref{lem:edgeshadow3} we know $s_i\cdot w = w+w'+ \sum_{\{\widetilde{w}:\widetilde{w}\prec_S w\}} \widetilde{c} \widetilde{w}$ where $\widetilde{c}\in \mathbb{Z}$. Hence $w'$ occurs with a coefficient of 1 in $s_i\cdot w$ and moreover the shadow of every other reduced web in $s_i \cdot w$ is strictly contained in the shadow of $w'$ as desired.

Now let $\widetilde{w}$ be a reduced web such that $c_{\widetilde{w}}^w\neq 0$. Then $\widetilde{w}\prec_S w$. By Lemma \ref{lem:shadowprop} the shadow of every reduced summand of $s_i \cdot \widetilde{w}$ is strictly contained in the shadow of $w'$. Hence $\phi(v_{T'})$ is a linear combination of reduced webs in which $w'$ occurs with coefficient 1 and any other reduced web with a nonzero coefficient has shadow strictly contained in the shadow of $w'$.  By induction, this proves the claim.
\end{proof}

 Recall Corollary \ref{cor:total} showed that any total order completing $\prec_S$ also completes $\prec_T$. Hence we get the following, which answers the last remaining open question from our paper on $\mathfrak{sl}_2$ webs \cite{MR3920353}.

\begin{theorem}\label{thm:existtriang}
There exists a total order completion of $\prec_T$ with respect to which the matrix for $\phi$ is unitriangular.
\end{theorem}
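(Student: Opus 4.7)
The plan is to assemble Theorem \ref{thm:existtriang} as essentially a formal consequence of two results that have already been established in the preceding sections: Theorem \ref{thm:coefficients}, which gives unitriangularity of $\phi$ with respect to the shadow partial order $\prec_S$, and Corollary \ref{cor:total}, which asserts that the Hasse diagram of $\prec_T$ is a subgraph of the Hasse diagram of $\prec_S$, so every total order completing $\prec_S$ automatically completes $\prec_T$.

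First I would fix any linear extension $<$ of the partial order $\prec_S$ on $\mathcal{W}_{3n}^R$; such a linear extension exists by a standard topological-sort argument on the finite poset. Corollary \ref{cor:total} then ensures that $<$ also extends $\prec_T$, so this single total order is a simultaneous completion of both partial orders.

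Next I would use $<$ to linearly order the Specht basis $\{v_T\}$ (via the bijection with reduced webs) and the web basis $\{w\}$ of $V^{\mathcal{W}_{3n}}$. To verify unitriangularity of the matrix of $\phi$ in these ordered bases, I would apply Theorem \ref{thm:coefficients}: for each $T' \in SYT(n,n,n)$ with corresponding web $w'$, we have
\[
\phi(v_{T'}) = w' + \sum_{\widetilde{w} \prec_S w'} c_{\widetilde{w}}^{w'}\,\widetilde{w}.
\]
Since $\widetilde{w} \prec_S w'$ implies $\widetilde{w} < w'$ by choice of $<$, every nonzero off-diagonal contribution comes from a web strictly smaller than $w'$ in $<$, which is precisely the condition for the matrix of $\phi$ to be upper unitriangular in this ordering. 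The diagonal coefficient is $1$ by the same theorem.

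There is no serious obstacle here; the work has already been done in Theorem \ref{thm:coefficients} and Corollary \ref{cor:total}. The only point that requires a moment of care is confirming that the direction of refinement is the right one: we need completions of the \emph{coarser} order $\prec_T$ to exist among completions of the \emph{finer} order $\prec_S$, and this is exactly what Corollary \ref{cor:total} provides, since a refinement of a partial order has a (typically smaller) set of linear extensions, all of which also linearly extend the original. Thus the entire proof collapses to a one-line deduction, and I would present it as such.
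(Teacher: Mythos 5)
Your proposal is correct and matches the paper's argument exactly: the paper likewise derives Theorem \ref{thm:existtriang} by taking any linear extension of $\prec_S$, invoking Corollary \ref{cor:total} to see that it also completes $\prec_T$, and then applying Theorem \ref{thm:coefficients} for unitriangularity. Nothing further is needed.
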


We make three additional conjectures about these coefficients, all aimed at characterizing vanishing entries in the matrix for $\phi$. The first conjecture is that if $\widetilde{w} \not \prec_T w'$ then the coefficient $c_{\widetilde{w}}^{w'} = 0$.  At first glance, this might seem the same as Theorem \ref{thm:existtriang}, but in fact it upgrades the theorem from an existential statement to a universal one, in the sense that it means the matrix for $\phi$ is unitriangular with respect to {\em any} total order that completes the partial order $\prec_T$.

\begin{conjecture}\label{conj:taborder}
If $c_{\widetilde{w}}^{w'}\neq 0$ then $\widetilde{w}\prec_T w'$.
\end{conjecture}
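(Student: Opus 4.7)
The plan is to strengthen the inductive scheme of Theorem~\ref{thm:coefficients} by replacing shadow containment $\prec_S$ with the tableau order $\prec_T$ throughout. The induction is on $r(T')$; the base case is Lemma~\ref{lem:basecase}. For the inductive step, I would choose the minimal $i$ such that there is an edge $w \stackrel{s_i}{\longrightarrow} w'$ in the Hasse diagram for $\prec_T$, and use the $S_{3n}$-equivariance of $\phi$ together with Proposition~\ref{prop:tabact} to write
\[
\phi(v_{T'}) \;=\; s_i \cdot \phi(v_T) \;=\; s_i \cdot w \;+\; \sum_{\widetilde w \prec_T w} c^{w}_{\widetilde w}\,(s_i \cdot \widetilde w).
\]
The goal is to show that every reduced web appearing on the right either equals $w'$ with coefficient $1$ or lies strictly below $w'$ in $\prec_T$.

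The first ingredient needed is a strengthening of Lemma~\ref{lem:edgeshadow3}: with $w$ and $w'$ the minimal edge as above, each extra reduced summand $\widetilde w$ appearing in $s_i \cdot w = w + w' + \sum \widetilde c\,\widetilde w$ should satisfy $\widetilde w \prec_T w$ (not merely $\widetilde w \prec_S w$). In the case analysis of Lemma~\ref{lem:edgeshadow3} the extra summands arise in two ways: either by smoothing an $M$-diagram crossing adjacent to positions $i, i+1$, or by reducing a bigon that meets the boundary. In each concrete instance I would construct an explicit chain of covers $\widetilde w \stackrel{s_{j_1}}{\longrightarrow} \cdots \stackrel{s_{j_k}}{\longrightarrow} w$ in the Hasse diagram for $\prec_T$, so that $\widetilde w \prec_T w \prec_T w'$ by transitivity. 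The local structure of the case analysis in the proof of Lemma~\ref{lem:edgeshadow3} should guide the choice of the $s_{j_\ell}$.

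The second and harder ingredient is the analog of Lemma~\ref{lem:shadowprop} with $\prec_T$ in place of $\prec_S$: if $w \prec_T \overline w$ and $\overline w \stackrel{s_i}{\longrightarrow} w'$ is a cover, then every reduced summand of $s_i \cdot w$ is $\prec_T w'$. I would prove this by induction on the length of a $\prec_T$-chain from $w$ to $\overline w$, reducing to the base case where $w \stackrel{s_j}{\longrightarrow} \overline w$ for some $j$, and then arguing by a braid-relation analysis splitting on whether $|i-j| \geq 2$, $j = i \pm 1$, or $j = i$ (which cannot occur because then $\overline w$ would not be above $w$). When $s_i, s_j$ commute, the argument is essentially formal, since $s_i\cdot w$ and $\overline w$ play symmetric roles and the cover $s_i\cdot w \stackrel{s_j}{\longrightarrow} w'$ is forced. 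When $|i-j| = 1$, one is forced into a careful examination of the six-symbol local boundary words at positions $i{-}1, i, i{+}1, i{+}2$ in $w, \overline w, s_i\cdot w,$ and $w'$.

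The main obstacle is precisely this diamond lemma step. The shadow argument in Lemma~\ref{lem:shadowprop} succeeded because $\prec_S$ is characterized by pointwise depth along the boundary (Lemma~\ref{lem:depthshad}), a local invariant that behaves predictably under the skein action. The tableau order, by contrast, is defined by the existence of a global chain of tableau swaps and has no such local characterization. Moreover, Theorem~\ref{thm:notsamesl3} produces pairs $\widetilde w \prec_S w'$ with $\widetilde w \not\prec_T w'$, so the conjecture asserts the strong statement that no such pair can appear as a nonzero term of $\phi(v_{T'})$. A possible alternative strategy is to bypass the diamond lemma altogether: for each $T'$, produce a specific reduced expression for a minimal permutation carrying the column-filled tableau $T_0$ to $T'$ (for example, one arising from a jeu-de-taquin or promotion-based algorithm), and verify directly that the action of this specific word keeps all intermediate reduced summands inside the tableau-order interval $[w_0, w']$. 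Either approach will require a substantially finer analysis of the $\mathfrak{sl}_3$ braiding action than is used in the existing shadow-order proof.
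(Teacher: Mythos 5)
This statement is an \emph{open conjecture} in the paper (Conjecture \ref{conj:taborder}); the authors do not prove it, and your proposal does not prove it either --- it is a research plan whose two key steps are left unestablished. The first gap is your proposed strengthening of Lemma \ref{lem:edgeshadow3} from $\widetilde w \prec_S w$ to $\widetilde w \prec_T w$. You say you would ``construct an explicit chain of covers'' for each extra reduced summand, but no construction is given, and this is exactly where the difficulty lives: after a square reduction the resulting summands can differ from $w$ nonlocally, and since $\prec_T$ is defined only by the existence of a global chain of tableau swaps (with no pointwise characterization analogous to Lemma \ref{lem:depthshad}), exhibiting such a chain for an arbitrary summand is an unsolved combinatorial problem, not a routine verification. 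The second gap is the one you yourself flag: the diamond lemma for $\prec_T$ replacing Lemma \ref{lem:shadowprop}. You acknowledge it is ``the main obstacle'' and do not resolve it. Moreover, your claim that the commuting case $|i-j|\geq 2$ is ``essentially formal'' is too optimistic: the paper notes that for $\mathfrak{sl}_3$ the number of reduced summands of $s_i\cdot w$ is a nonconstant function of the number of boundary vertices, so even in the commuting case one must control every summand, not merely verify that a single cover $s_i\cdot w \stackrel{s_j}{\longrightarrow} w'$ is forced.

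The paper's own evidence suggests why no argument of this shape is currently known. Theorem \ref{thm:notsamesl3} shows $\prec_S$ strictly refines $\prec_T$ and is not even ranked, so the shadow machinery cannot simply be transported; and the authors' computational data (Figure \ref{fig:counterdata}) exhibits tens of thousands of pairs with $\widetilde w \prec_S w'$ but $r(\widetilde w) > r(w')$ for $n=7$, each of which must be ruled out as a summand before even the weaker rank Conjecture \ref{conj:coefrank} --- a necessary consequence of Conjecture \ref{conj:taborder} --- can be established. Your alternative suggestion (tracking a specific reduced word from $w_0$ to $w'$) is a reasonable direction, but as written it is a proposal for future work rather than a proof.
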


The second conjecture gives a necessary but not sufficient condition for Conjecture \ref{conj:taborder}, by characterizing the relative ranks of the nonzero coefficients of $\phi$.

\begin{conjecture}\label{conj:coefrank}
If $c_{\widetilde{w}}^{w'}\neq 0$ then $r(\widetilde{w})< r(w')$.
\end{conjecture}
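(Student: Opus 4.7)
The plan is to refine the inductive argument of Theorem \ref{thm:coefficients} so as to track the rank function $r$ for $\prec_T$ simultaneously with shadow containment. I would induct on $r(w')$. The base case $r(w') = 0$ holds because $w' = w_0$ and $\phi(v_{T_0}) = w_0$ has no further summands. For the inductive step, choose the smallest $i$ with an edge $w \stackrel{s_i}{\to} w'$ in the Hasse diagram for $\prec_T$, so that $r(w) = r(w') - 1$. By Proposition \ref{prop:tabact} and equivariance of $\phi$, we have
\[
\phi(v_{T'}) = s_i \cdot \phi(v_T) = s_i \cdot w \;+\; \sum_{v : c_v^w \neq 0} c_v^w\, (s_i \cdot v),
\]
and by the inductive hypothesis every $v$ appearing in the sum satisfies $r(v) < r(w) = r(w') - 1$, so $r(v) \leq r(w') - 2$.

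The crux of the argument is the following \emph{Rank Key Lemma}, a rank-sensitive strengthening of Lemmas \ref{lem:edgeshadow1}--\ref{lem:edgeshadow3}: for every reduced $\mathfrak{sl}_3$ bottom web $u$ and simple transposition $s_j$, every reduced summand $\widetilde{u}$ of $s_j \cdot u$ that is neither $u$ itself nor the (uniquely determined, if it exists) web $u'$ arising from a Hasse edge $u \stackrel{s_j}{\to} u'$ satisfies $r(\widetilde{u}) < r(u)$. Granting the Rank Key Lemma, the inductive step follows immediately: in $s_i \cdot w$ the summand $w'$ has rank $r(w') = r(w) + 1$, the summand $w$ has rank $r(w) < r(w')$, and all other summands have rank $< r(w) < r(w')$; while for each $v$ in the sum, every reduced summand of $s_i \cdot v$ has rank at most $r(v) + 1 \leq r(w') - 1 < r(w')$. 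Therefore $w'$ is the unique summand of $\phi(v_{T'})$ with rank $r(w')$, and every other summand has strictly smaller rank. In particular $w'$ cannot appear inside any $s_i \cdot v$ for the $v$ in the sum, which is also consistent with the coefficient of $w'$ being exactly $1$ as in Theorem \ref{thm:coefficients}.

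The hard part will be proving the Rank Key Lemma. My approach is a case analysis on the boundary symbols of $u$ at positions $j, j+1$, paralleling Lemmas \ref{lem:edgeshadow0}--\ref{lem:edgeshadow2}, but tracking $r(\widetilde{u})$ via the explicit formula $r(u) = s(u) + \sum_{F \notin E(u)} d(F)$ of Lemma \ref{lem:sl3rank} rather than only the shadow $S(\widetilde{u})$. The principal obstacle is that a single application of $s_j$ produces an unreduced web whose reduction via the relations in Figure \ref{fig:webrelations} can \emph{cascade}: applying a digon or square relation in one region may create a new digon or square elsewhere, and the reductions can propagate well beyond the local neighborhood of positions $j, j+1$. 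Each individual reduction shifts face depths, source-vertex counts, and excluded-face sets, and these shifts do not obviously combine additively across a long cascade. I expect the required control to come from identifying a ``complexity'' statistic on the intermediate unreduced webs that strictly decreases under each reduction step while simultaneously forcing $r(\widetilde{u}) \leq r(u) + 1$, with equality occurring only for the single Hasse-edge summand $u'$. Should a direct case analysis prove unwieldy, an alternative route is to first verify the Rank Key Lemma when the cascade terminates after a bounded number of reductions and then argue by induction on cascade length, using that each local reduction strictly simplifies the intermediate web.
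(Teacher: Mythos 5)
You should first note that the statement you are addressing is stated in the paper as Conjecture \ref{conj:coefrank} and is left \emph{open} there: the paper offers no proof, only the observation --- which your proposal essentially reproduces --- that the conjecture would follow if the shadow-containment statements in Lemmas \ref{lem:edgeshadow1}, \ref{lem:edgeshadow2}, \ref{lem:edgeshadow3}, and \ref{lem:shadowprop} could be replaced by rank inequalities. Your reduction of the conjecture to the ``Rank Key Lemma'' is sound as a reduction, but the Rank Key Lemma is exactly where all of the difficulty lives, and you do not prove it; you only describe obstacles and candidate strategies. Your Rank Key Lemma is in fact a strengthening of the paper's own Conjecture \ref{conj:shadrank} (which asserts $r(\widetilde{w})\leq r(w)+1$ for all reduced summands of $s_i\cdot w$, and $r(\widetilde{w})\leq r(w)$ when $\widetilde{w}\preceq_S w$), and the computational data in Figure \ref{fig:counterdata} records hundreds of unresolved potential counterexamples to even that weaker statement already for $n=6$ and $n=7$.

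The concrete reason a case analysis cannot simply upgrade the shadow lemmas to rank statements is the phenomenon exhibited in Figure \ref{fig:sl3Counter}: shadow containment $\widetilde{w}\prec_S w$ does \emph{not} imply $r(\widetilde{w})<r(w)$, since $\prec_S$ fails to be ranked by Theorem \ref{thm:notsamesl3}. So, for instance, in the $++$, $00$, $--$ cases where Lemma \ref{lem:edgeshadow1} yields $\widetilde{w}\prec_S w$ for every reduced summand $\widetilde{w}\neq w$, you cannot conclude $r(\widetilde{w})<r(w)$ without a genuinely new argument. The cascading-reduction problem you name is real and is precisely why the paper could prove the refined form of the key lemma (Lemma \ref{lem:edgeshadow3}) only for the smallest-indexed edge into $w'$, where the propagation of depth changes can be controlled; no monotone complexity statistic that bounds rank through an arbitrary cascade of square and bigon reductions is currently known. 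As written, your proposal establishes nothing beyond what the paper already observes: the conjecture reduces to an open rank-inequality lemma, and the statement remains a conjecture.
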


If it were true that $\widetilde{w}\prec_S w'$ implies $r(\widetilde{w}) < r(w')$ then Conjecture \ref{conj:coefrank} would follow. Unfortunately, the example in Figure \ref{fig:sl3Counter} shows this is not always the case. However, we only need $\widetilde{w} \prec_S s'$ to imply $r(\widetilde{w}) < r(w')$ in certain contexts.  In particular, in order for Conjecture \ref{conj:coefrank} to be true, it would be sufficient to show the shadow containment statements in Lemmas \ref{lem:edgeshadow1}, \ref{lem:edgeshadow2}, \ref{lem:edgeshadow3}, and \ref{lem:shadowprop} can be replaced with rank inequalities.  Thus we make the following conjecture.

\begin{conjecture} \label{conj:shadrank}
Let $\widetilde{w}$ be a reduced summand of $s_i \cdot w$. Then $r(\widetilde{w})\leq r(w)+1$. Furthermore if $\widetilde{w}\preceq_S w$ then $r(\widetilde{w}) \leq r(w)$.
\end{conjecture}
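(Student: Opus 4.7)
The plan is to proceed by case analysis on the $i$-th and $(i+1)$-st boundary symbols of $w$, paralleling the structure of Lemmas \ref{lem:edgeshadow0}, \ref{lem:edgeshadow1}, and \ref{lem:edgeshadow2}. In the cases $+-$, $+0$, $0-$, Lemma \ref{lem:edgeshadow0} gives $s_i \cdot w = -w$, so the only reduced summand is $w$ itself and both inequalities hold with equality. In the cases $-+$, $-0$, $0+$, Lemma \ref{lem:edgeshadow2} produces an edge $w \stackrel{s_i}{\to} w'$ with $r(w') = r(w)+1$ and every reduced summand $\widetilde{w}$ satisfying $\widetilde{w} \preceq_S w'$. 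For the first conclusion it then suffices to show that whenever $\widetilde{w}$ is a reduced summand with $\widetilde{w} \ne w'$, one has $r(\widetilde{w}) \leq r(w)$. In the three remaining cases $++$, $00$, $--$, Lemma \ref{lem:edgeshadow1} already gives $\widetilde{w} \prec_S w$, so the task reduces to establishing $r(\widetilde{w}) \leq r(w)$.

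I would then attack the rank bound by tracking the reduction of $\widehat{w} := (s_i \cdot w) - w$, which is obtained from $w$ by inserting a local ``$H$-piece'' at positions $i$, $i+1$ per the second summand in Figure \ref{fig:smoothings}. The main tool is the rank formula $r(w) = s(w) + \sum_{F\notin E(w)} d(F)$ from Lemma \ref{lem:sl3rank}. The insertion of the $H$-piece changes $s(w)$ by exactly $+1$ (one new internal source) and modifies $d(F)$ only for faces meeting a small disc around the new piece; each subsequent application of a relation from Figure \ref{fig:webrelations} (circle, bigon, or square move) removes internal vertices and decreases total face-depth contribution in a controlled way. A careful inventory of how source counts and non-excluded face depths change under each reduction rule should yield $r(\widetilde{w}) - r(w) \le 1$ for every reduced summand, with equality only possible when the single ``$+1$'' from the new $H$-piece is not cancelled by any compensating reductions.

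For the sharper statement $r(\widetilde{w}) \leq r(w)$ under the additional hypothesis $\widetilde{w} \preceq_S w$, I would combine the above with Lemma \ref{lem:depthshad}: the shadow-containment hypothesis forces every depth along the $x$-axis in $\widetilde{w}$ to be at most the corresponding depth in $w$, which in particular pins down the depth of every face adjacent to the boundary. Using Proposition \ref{prop:sl3anch} to split faces of $\widetilde{w}$ according to whether they are band-adjacent to the boundary or lie strictly interior, one can then argue that the only way the source count could increase by $1$ is if some interior face depth drops by a compensating amount, bringing the net change $r(\widetilde{w}) - r(w)$ to at most $0$. The implication $\widetilde{w} \preceq_S w \Rightarrow r(\widetilde{w}) \le r(w)$ should also be proved separately for the $++$, $00$, $--$ cases by tracking the web reduction $\widehat{w} \to \sum c_j\,\widetilde{w}_j$ directly, since there no ``$w'$'' intervenes and any rank increase must come purely from the introduced source vertex.

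The main obstacle I anticipate is the cascading-reduction phenomenon flagged in the discussion preceding Lemma \ref{lem:edgeshadow3}: a single square move can unmask another reducible configuration far from positions $i$, $i+1$, and iterated reductions can in principle push depths around the diagram in ways that are hard to bound locally. The counterexample in Figure \ref{fig:sl3Counter} shows that shadow containment alone does \emph{not} imply rank containment, so the proof must essentially use the fact that $\widetilde{w}$ arises from $w$ by braiding at a single location rather than from an arbitrary pair with $\widetilde{w} \preceq_S w$. Handling this cleanly will likely require either a monovariant that decreases at every reduction step, or a secondary induction on the number of internal faces of the intermediate unreduced web, with the rank formula on band diagrams from Lemma \ref{lem:sl3rank} serving as the bookkeeping device.
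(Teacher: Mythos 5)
There is a genuine gap here, and it is worth being clear about its nature: the statement you are trying to prove is stated in the paper as Conjecture~\ref{conj:shadrank}, not as a theorem. The authors give no proof; they only rule out one specific potential counterexample (the pair in Figure~\ref{fig:sl3Counter}, via the crossing-number Lemma~\ref{lem:crossings}) and then report computational data (Figure~\ref{fig:counterdata}) showing that for $n=6$ and $n=7$ there remain hundreds of pairs satisfying $\widetilde{w}\prec_S w$, $r(\widetilde{w})>r(w)$, and $c(\widetilde{M})\le c(M)+1$ that their methods cannot exclude. So any purported proof must do something genuinely new, and yours does not yet.

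Concretely, your proposal is a plan rather than a proof: the entire burden falls on the sentence asserting that ``a careful inventory of how source counts and non-excluded face depths change under each reduction rule should yield $r(\widetilde{w})-r(w)\le 1$,'' and that inventory is never carried out. This is exactly the hard part. The rank formula $r(w)=s(w)+\sum_{F\notin E(w)}d(F)$ from Lemma~\ref{lem:sl3rank} is a global quantity: a single square move can merge faces, change which faces are excluded, and lower the depths of many faces far from positions $i$ and $i+1$ simultaneously (depth is a dual-graph distance, so deleting edges has nonlocal effects). You correctly flag the cascading-reduction obstacle, but you do not resolve it --- no monovariant is exhibited, and the secondary induction you gesture at is not set up. Note also that in the $-+$, $-0$, $0+$ cases, Lemma~\ref{lem:edgeshadow2} only gives $\widetilde{w}\preceq_S w'$, and by Theorem~\ref{thm:notsamesl3} shadow containment does \emph{not} imply a rank inequality, so you cannot conclude $r(\widetilde{w})\le r(w')=r(w)+1$ from that lemma; as you acknowledge, the rank bound must come from the reduction bookkeeping itself, which is the missing piece. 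Until that inventory is actually performed and shown to close up under iterated reductions, the statement remains a conjecture.
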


It is reasonable to wonder if the webs $\widetilde{w}$ and $w$ described in Figure \ref{fig:sl3Counter} provide a counterexample to Conjecture \ref{conj:shadrank}. The answer is no, per the following lemma.  Recall that we denote the number of crossings in an $M$-diagram by $c(M)$.

\begin{lemma}\label{lem:crossings}
Let $\widetilde{w}$ be a reduced summand of $s_i \cdot w$ and let $\widetilde{M}$ and $M$ be the $M$-diagrams for $\widetilde{w}$ and $w$ respectively. Then $c(\widetilde{M}) \leq c(w)+1$. Moreover $c(\widetilde{M}) = c(w)+1$ if and only if both $s_i \cdot w = w +\widetilde{w}$ and the boundary words for $\widetilde{w}$ and $w$ differ in one of the ways described in the following table. 

\begin{figure}[h]
\begin{tabular}{|c|c|c|c|}
\hline
&   {\bf Boundary word for $w$}   & {\bf Boundary word for $\widetilde{w}$} & {\bf Differ elsewhere?}  \\  \hline
{\bf Case 1}: & $0+$ & $+0$ & No  \\ \hline
{\bf Case 2}: & $-0$ & $0-$ & No  \\ \hline
{\bf Case 3}: & $-+$ & $+-$ & No  \\ \hline
{\bf Case 4}: & $++$ & $+0$ & Yes  \\ \hline
{\bf Case 5}: & $--$ & $0-$ & Yes  \\ \hline
\end{tabular}
\caption{ $i^{th}$ and $(i+1)^{st}$ boundary symbols when $s_i \cdot w = w+\widetilde{w}$} \label{fig:crossingcases}
\end{figure}
\end{lemma}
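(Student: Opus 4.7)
The plan is to combine a source-vertex counting argument with a case analysis of the local boundary word of $w$ at positions $i, i+1$. The starting observation, noted in the proof of Lemma \ref{lem:sl3rank}, is that the crossings of the M-diagram of a reduced web are in bijection with its internal source trivalent vertices: writing $s(w)$ for this count, we have $c(M) = s(w)$.

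For the inequality $c(\widetilde{M}) \leq c(M) + 1$, I would reason as follows. The skein relation of Figure \ref{fig:smoothings} gives $s_i \cdot w = w + \widehat{w}$, where $\widehat{w}$ is obtained from $w$ by inserting the H-smoothing at positions $i, i+1$. This insertion adds one new source and one new sink trivalent vertex, so $s(\widehat{w}) = s(w) + 1$. Any subsequent reduction of $\widehat{w}$ via the relations in Figure \ref{fig:webrelations} strictly decreases $s$: the bigon relation removes one source--sink pair (net $-1$), and each of the two resolutions of the square relation replaces four trivalent vertices (two sources and two sinks) by either zero or two (net $-2$ or $-1$); circle removal does not apply since a single H-smoothing cannot produce a closed component from a reduced web. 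Hence every reduced summand $\widetilde{w}$ of $s_i \cdot w$ arising from $\widehat{w}$ satisfies $s(\widetilde{w}) \leq s(\widehat{w}) = s(w) + 1$, with equality if and only if $\widehat{w}$ is itself reduced and $\widetilde{w} = \widehat{w}$; since the other reduced summand $w$ gives the strictly smaller value $s(w)$, the equality case also forces $s_i \cdot w = w + \widetilde{w}$.

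The characterization of equality then reduces to determining for which pairs $(b_i, b_{i+1})$ in the boundary word of $w$ the H-smoothed web $\widehat{w}$ is already reduced. A bigon or small square arises near the H-sink precisely when the strands from $i, i+1$ in $w$ converge close to the boundary; this is detected locally from $(b_i, b_{i+1})$ together with the $0$-modifications of Figure \ref{fig:MMod}. Case-checking the nine possibilities as in Figure \ref{fig:twistcases} yields: $(b_i, b_{i+1}) \in \{+-, +0, 0-\}$ produces a bigon, whence $\widehat{w}$ reduces to $-2w$ (cf.\ Lemma \ref{lem:edgeshadow0}); $(b_i, b_{i+1}) = 00$ produces a reducible configuration from the interaction of the two $0$-modifications with the H-shape; and each of the remaining pairs $\{0+, -0, -+, ++, --\}$ leaves $\widehat{w}$ reduced. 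For $(b_i, b_{i+1}) \in \{0+, -0, -+\}$, Lemma \ref{lem:edge to bdry word} identifies $\widehat{w}$ with the Hasse-edge target $w'$, whose boundary word differs from that of $w$ only at positions $i, i+1$, giving Cases 1, 2, 3 of Figure \ref{fig:crossingcases}. For $(b_i, b_{i+1}) \in \{++, --\}$, the H-smoothing forces the local boundary change $++\to +0$ or $--\to 0-$; since the boundary word of every web in $\mathcal{W}_{3n}^R$ contains exactly $n$ copies of each of $+, 0, -$, a compensating change must occur elsewhere, yielding Cases 4 and 5. The main technical obstacle is the $00$ case, where both $0$-modifications contribute vertical edges that can interact with the H-shape in several sub-configurations; careful enumeration will show that in every sub-configuration either a bigon appears between the H-sink and a $0$-modification sink, or a four-edge closed face appears formed by segments of the H-shape together with the $0$-modifications, so that $\widehat{w}$ is never reduced and equality cannot occur.
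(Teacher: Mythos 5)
Your overall strategy matches the paper's: identify $c(M)$ with the number of internal source vertices of $w$, observe that the H-smoothing produces $s_i\cdot w = w+\widehat{w}$ with $\widehat{w}$ having exactly one more source than $w$, note that every reduction move (bigon or square) strictly decreases the source count, and conclude that equality $c(\widetilde{M})=c(M)+1$ forces $\widehat{w}$ to be reduced and equal to $\widetilde{w}$. (One small slip: both resolutions of the square relation in Figure \ref{fig:webrelations} remove exactly two sources and two sinks, not ``zero or two''; this does not affect the monotonicity you need.)

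However, there is a genuine error in your case analysis. You assert that whether $\widehat{w}$ is reduced is determined by the local pair $(b_i,b_{i+1})$, and in particular that each of $0+$, $-0$, $-+$, $++$, $--$ ``leaves $\widehat{w}$ reduced.'' This is false: reducedness of $\widehat{w}$ depends on the global structure of $w$, not just on the two boundary symbols. Two counterexamples appear in the paper itself. First, in the proof of Lemma \ref{lem:edgeshadow3} (Case 1, where $w$ has $0+$ at positions $i,i+1$), when the two $M$-diagram arcs incident to $i$ and $i+1$ cross, the starred face of $\widehat{w}$ is a square and $\widehat{w}$ must be reduced further. Second, the corollary following this lemma considers $s_{10}\cdot w$ for the web $w$ of Figure \ref{fig:sl3Counter}, whose boundary word has $--$ at positions $10,11$, and there $\widehat{w}$ has a square face; indeed the entire point of that corollary is that equality \emph{fails} in that instance. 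If your claim were true, that corollary would be contradicted. The correct logic, and what the paper actually proves, is the converse implication: \emph{if} $\widehat{w}$ happens to be reduced, \emph{then} a depth analysis at $i,i+1$ shows the boundary words of $w$ and $\widetilde{w}=\widehat{w}$ must relate as in one of the five rows of the table (your computation of those boundary-word changes, including the balance argument forcing a difference elsewhere in Cases 4 and 5, is correct). Since the false assertion is an overreach rather than a missing step, the two directions of the ``if and only if'' can be salvaged by deleting it, but as written your proof claims a sufficient condition for reducedness that does not hold and that conflicts with how the lemma is subsequently applied.
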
 

\begin{proof}
Suppose $\widetilde{w}$, $w$, $\widetilde{M}$, and $M$ satisfy the hypotheses of the lemma. To obtain a web from an $M$-diagram, each crossing is replaced with a pair of internal vertices (one source and one sink), and the middle of each $M$ is replaced with a boundary edge and an internal sink vertex. This is illustrated in Figure \ref{fig:MMod}. Hence the number of source vertices in a reduced web is the number of crossings in its $M$-diagram.

Given $i$ we compute $s_i \cdot w$ by appending a flattened crossing to the bottom of $w$ at the $i$ and $(i+1)^{st}$ vertices and resolving as a sum of webs as shown in Figure \ref{fig:smoothings}. Say $s_i\cdot w = w + \widehat{w}$. The web $\widehat{w}$ is a possibly-reducible web with one more source vertex than $w$. If $\widetilde{w} = w$ then $c(\widetilde{M}) = c(M)$ and we are done. Assume for the remainder of the argument that $\widetilde{w}$ is a reduced summand of $\widehat{w}$.

If $\widehat{w}$ is not reduced, it either has a square or a bigon. If $\widehat{w}$ has a bigon then boundary vertices $i$ and $i+1$ were connected to the same internal vertex in $w$. Applying the bigon relation from  Figure \ref{fig:webrelations}, it follows that $s_i \cdot w = w + \widehat{w} = w-2w=-w$. If $\widehat{w}$ has a square then expressing $\widehat{w}$ as a sum of reduced webs requires one or more applications of the square relation on $\mathfrak{sl}_3$ webs shown in Figure \ref{fig:webrelations}. Each application of this relation removes two source vertices, so a reduced summand of $\widehat{w}$ in these cases can have at most $c(M)$ source vertices, and we conclude $c(\widetilde{M})\leq c(M)$.

If $\widehat{w}$ is reduced then $\widehat{w} = \widetilde{w}$. A straightforward case analysis of the depths at $i$ and $i+1$ under the action of $s_i$ shows that the five cases in Figure \ref{fig:crossingcases} are the only possibilities for the relationship between the boundary words of $w$ and $\widetilde{w}$. In Cases 1--3, the changes in depths are completely local and $\widetilde{M}$ has exactly one more crossing than $M$, namely the new crossing introduced by the action of $s_i$. 

Now consider Cases 4 and 5. If the symbols at $i$ and $i+1$ are both $+$ in $w$ then they will be $+0$ in $\widehat{w}$ by analyzing depths along the boundary after acting by $s_i$. Similarly if the symbols at $i$ and $i+1$ are both $-$ in $w$ then they will be $0-$ in $\widehat{w}$.  Thus the boundary word for $\widetilde{w}$ can only be balanced and Yamanouchi if it differs from the boundary word for $w$ in at least one other position. 
\end{proof}

We use this general result to show that the potential counterexample in Figure \ref{fig:sl3Counter} does not violate our conjecture, because $\widetilde{w}$ is not a reduced summand of $s_i \cdot w$ for any $s_i$.

\begin{corollary}
Consider the webs $\widetilde{w}$ and $w$ with data shown in Figure \ref{fig:sl3Counter}. There is no $i$ such that $\widetilde{w}$ is a reduced summand of $s_i \cdot w$. 
\end{corollary}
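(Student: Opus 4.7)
The strategy is to invoke Lemma \ref{lem:crossings} after verifying its hypothesis $c(\widetilde M) = c(M) + 1$, and then to rule out each of the five cases in Figure \ref{fig:crossingcases} by direct inspection of the boundary words.

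First I would read off the boundary words of $w$ and $\widetilde w$ from the tableaux in Figure \ref{fig:sl3Counter}, using the rule that rows $1, 2, 3$ correspond to the symbols $+, 0, -$ respectively. This produces two length-$18$ words that agree everywhere except at the four positions $9, 10, 13, 14$, where $w$ reads ${+}{-}00$ and $\widetilde w$ reads $00{-}{+}$. Next I would count the $(+,0)$- and $(0,-)$-arcs of the associated $M$-diagrams by pairing symbols in the standard open/close fashion, and tally the interlocking pairs; this short computation yields $c(M) = 9$ and $c(\widetilde M) = 10$, so that $c(\widetilde M) = c(M) + 1$ and Lemma \ref{lem:crossings} applies to any $i$ for which $\widetilde w$ might be a reduced summand of $s_i \cdot w$.

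By Lemma \ref{lem:crossings}, the boundary words of $w$ and $\widetilde w$ would then have to differ in one of the five patterns of Figure \ref{fig:crossingcases}. Cases 1--3 force the words to agree off the single pair $\{i, i+1\}$, which is incompatible with the four-element disagreement set $\{9, 10, 13, 14\}$. Case 4 requires $w$ to read $++$ and $\widetilde w$ to read $+0$ at $\{i, i+1\}$; the $++$-pairs of $w$ occur at $\{1,2\}, \{2,3\}, \{3,4\}, \{4,5\}$ and $\widetilde w$ likewise reads $++$ at each of these, so Case 4 never matches. Case 5 requires $w$ to read $--$ and $\widetilde w$ to read $0-$ at $\{i, i+1\}$, and the only adjacent pair where this local pattern holds is $\{i, i+1\} = \{10, 11\}$.

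The main obstacle is therefore Case 5 at $i = 10$. To close this case I would revisit the proof of Lemma \ref{lem:crossings}: the hypothesis $c(\widetilde M) = c(M) + 1$ forces the H-smoothed summand $\widehat w$ of $s_{10}\cdot w$ to already be reduced and hence equal to $\widetilde w$, and the H-smoothing alters the boundary word only by the strictly local replacement at positions $10, 11$, changing $--$ to $0-$. That change loses one $-$ and gains one $0$ with no compensating substitution elsewhere, so $\widehat w$ is neither $(+,0)$- nor $(0,-)$-balanced, contradicting Proposition \ref{prop:sl3webword} applied to the reduced web $\widetilde w$. Hence Case 5 is also impossible, and no $i$ makes $\widetilde w$ a reduced summand of $s_i \cdot w$.
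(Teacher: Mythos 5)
Your setup follows the paper's route: compute $c(M)=9$ and $c(\widetilde M)=10$, invoke Lemma \ref{lem:crossings}, and scan the boundary words to eliminate Cases 1--3 and Case 4, leaving only Case 5 at $i=10$. That part is correct and matches the paper, which also narrows the problem to $s_{10}\cdot w$.

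The gap is in how you dispose of Case 5. You assert that the H-smoothing ``alters the boundary word only by the strictly local replacement at positions $10,11$,'' and deduce that a reduced $\widehat w$ would have an unbalanced boundary word. That locality claim is false, and it directly contradicts the lemma you are invoking: the table in Figure \ref{fig:crossingcases} records ``Differ elsewhere? Yes'' for Cases 4 and 5 precisely because attaching the H makes the faces on either side of the new vertical edge adjacent, which can lower the depth of the deeper one and propagate depth changes to other boundary positions. (If your locality claim were true, Cases 4 and 5 could never occur at all and the lemma's table would stop at Case 3.) Indeed, the data of Figure \ref{fig:sl3Counter} is fully consistent with Case 5 at $i=10$ together with differences elsewhere (positions $9,13,14$), so Lemma \ref{lem:crossings} alone cannot rule it out. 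The paper closes this case by a direct computation on the web itself: $s_{10}\cdot w = w + \widehat w$ where $\widehat w$ contains a square face and is therefore reducible, and each application of the square relation removes two source vertices, so every reduced summand of $s_{10}\cdot w$ has an $M$-diagram with at most $9$ crossings, excluding $\widetilde w$. You need some such additional argument specific to $i=10$; the balancedness contradiction as written does not hold.
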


\begin{proof}
Examining the $M$-diagrams for webs $\widetilde{w}$ and $w$ in Figure \ref{fig:sl3Counter}, we see they have 10 and 9 crossings respectively. Hence their boundary words must differ according to Figure \ref{fig:crossingcases}. Comparing boundary words, we see that the only possibility is that $\widetilde{w}$ is a reduced summand of $s_{10}\cdot w$. However, $s_{10}\cdot w = w+\widehat{w}$ where $\widehat{w}$ has a square face and is therefore reducible. This implies all reduced summands of $s_{10}\cdot w$ have $M$-diagrams with 9 or fewer crossings.
\end{proof}

While we can verify the example shown in Figure \ref{fig:sl3Counter} does not yield a counterexample to Conjecture \ref{conj:shadrank}, we cannot at this point generalize this argument. Using the aid of a computer program, we have crossing, rank, and shadow data for all webs in $\mathcal{W}_{3n}^R$ with $n\leq 7$. Using this data, we have generated a list of potential counterexamples that is given in Figure \ref{fig:counterdata}. 

\begin{figure}[h]
\begin{tabular}{|c|c|c|c|}
\hline
&& & \\
& Count of & Number of pairs $\widetilde{w},w\in SYT(n,n,n)$ such that & Number of these pairs with \\
$n$ &  $SYT(n,n,n)$ &   $\widetilde{w}\prec_S w$ and $r(\widetilde{w})>r(w)$ &  $c(\widetilde{M})\leq c(M)+1$ \\
& & & \\ \hline
1 & 1 & 0 & 0 \\ \hline
2 & 5 & 0 & 0 \\ \hline
3 & 42 & 0 & 0 \\ \hline
4 & 462 & 0 & 0 \\ \hline
5 & 6006  & 0 & 0 \\ \hline
6 & 87516  & 660  & 446  \\ \hline
7 & 1385670 & 62147  & 40865   \\ \hline
\end{tabular}
\caption{Number of potential counterexamples to Conjecture \ref{conj:shadrank}}\label{fig:counterdata}
\end{figure}

\bibliographystyle{plain}
\bibliography{references}

\end{document}